\documentclass[12pt]{amsart}
\usepackage{amscd,amsmath,amsthm,amssymb}
\usepackage[left]{lineno}
\usepackage{color}
\usepackage{stmaryrd}
\usepackage[utf8]{inputenc}
\usepackage{cleveref}
\usepackage{epstopdf}
\usepackage{graphicx}
\usepackage{xcolor}
\usepackage{subeqnarray}

\usepackage{cases}
\usepackage{subcaption}

\usepackage{tikz}
\usetikzlibrary{calc}

\definecolor{verylight}{gray}{0.97}
\definecolor{light}{gray}{0.9}
\definecolor{medium}{gray}{0.85}
\definecolor{dark}{gray}{0.6}

\def\NZQ{\mathbb}               

\def\KK{{\NZQ K}}

\def\KK{{\NZQ K}}

\def\G{{\mathcal G}}

\def\0b{{\mathbf 0}}
\def\alphab{{\mathbf \alphab}}
\def\c_ib{{\mathbf c_i}}

\def\reg{{\mathbf reg}}
\def\height{\operatorname{ht}}
\def\depth{\operatorname{depth}}
\def\opn#1#2{\def#1{\operatorname{#2}}} 
\opn\chara{char} \opn\length{\ell} \opn\pd{pd} \opn\rk{rk}
\opn\projdim{proj\,dim} \opn\injdim{inj\,dim} \opn\rank{rank}
\opn\depth{depth} \opn\grade{grade} \opn\height{height}
\opn\embdim{emb\,dim} \opn\codim{codim}

\opn\Tr{Tr} \opn\bigrank{big\,rank}
\opn\superheight{superheight}\opn\lcm{lcm}
\opn\trdeg{tr\,deg}
	\opn\reg{reg} \opn\lreg{lreg} \opn\ini{in} \opn\lpd{lpd}
	\opn\size{size} \opn\sdepth{sdepth}
	\opn\link{link}\opn\fdepth{fdepth}\opn\lex{lex}
	\opn\tr{tr}
	\opn\type{type}
	\opn\gap{gap}
	\opn\arithdeg{arith-deg}
	\opn\HS{HS}
	\opn\GL{GL}
	\opn\div{div} \opn\Div{Div} \opn\cl{cl} \opn\Cl{Cl}
	\opn\Spec{Spec} \opn\Supp{Supp} \opn\supp{supp} \opn\Sing{Sing}
	\opn\Ass{Ass} \opn\Min{Min}\opn\Mon{Mon}
	\opn\Ann{Ann} \opn\Rad{Rad} \opn\Soc{Soc}\opn\Deg{Deg}
	\opn\Im{Im} \opn\Ker{Ker} \opn\Coker{Coker} \opn\Am{Am}
	\opn\Hom{Hom} \opn\Tor{Tor} \opn\Ext{Ext} \opn\End{End}
	\opn\Aut{Aut} \opn\id{id}
	
	\opn\nat{nat}
	\opn\pff{pf}
	\opn\Pf{Pf} \opn\GL{GL} \opn\SL{SL} \opn\mod{mod} \opn\ord{ord}
	\opn\Gin{Gin} \opn\Hilb{Hilb}\opn\sort{sort}
	\opn\PF{PF}\opn\Ap{Ap}
	\opn\mult{mult}
	\opn\bight{bight}
	\opn\Facets{Facets}
	\opn\aff{aff}
	\opn\relint{relint} \opn\st{st}
	\opn\lk{lk} \opn\cn{cn} \opn\core{core} \opn\vol{vol}  \opn\inp{inp} \opn\nilpot{nilpot}
	\opn\link{link} \opn\star{star}\opn\lex{lex}\opn\set{set}
	\opn\width{wd}
	\opn\Fr{F}
	\opn\QF{QF}
	\opn\G{G}
	\opn\type{type}\opn\res{res}
	\opn\conv{conv}
	\opn\Ind{Ind}
	\opn\gr{gr}
	
	\def\pot#1#2{#1[\kern-0.28ex[#2]\kern-0.28ex]}

	\opn\dirlim{\underrightarrow{\lim}}
	\opn\inivlim{\underleftarrow{\lim}}
	\let\to=\rightarrow
	
	\def\Implies{\ifmmode\Longrightarrow \else
		\unskip${}\Longrightarrow{}$\ignorespaces\fi}
	\def\implies{\ifmmode\Rightarrow \else
		\unskip${}\Rightarrow{}$\ignorespaces\fi}
	\def\iff{\ifmmode\Longleftrightarrow \else
		\unskip${}\Longleftrightarrow{}$\ignorespaces\fi}

	\let\:=\colon
	\newtheorem{Theorem}{Theorem}[section]
	\newtheorem{Lemma}[Theorem]{Lemma}

	\newtheorem{Remark}[Theorem]{Remark}

	\newtheorem{Definition}[Theorem]{Definition}

	\let\epsilon\varepsilon
	\let\kappa=\varkappa
	\opn\dis{dis}
	\def\pnt{{\raise0.5mm\hbox{\large\bf.}}}
	
	\opn\Lex{Lex}
	
	\newcommand*{\circled}[1]{\lower.7ex\hbox{\tikz\draw (0pt, 0pt)%
			circle (.5em) node {\makebox[1em][c]{\small #1}};}}

\begin{document}
		\title {$\mathrm{v}$-numbers of powers of edge ideals of cubic circulant graphs}
		
		\author {Guangjun Zhu$^{\ast}$, Jiaxin Li and Yulin Hu}

		\address{School of Mathematical Sciences, Soochow University, Suzhou, Jiangsu, 215006, P. R. China}
		
		\email{zhuguangjun@suda.edu.cn (Corresponding author:Guangjun Zhu),
			\linebreak[4]  lijiaxinworking@163.com(Jiaxin Li), 1246683863@qq.com(Yulin Hu)}

		
		\thanks{$^{\ast}$ Corresponding author}
		
		\thanks{2020 {\em Mathematics Subject Classification}.
 13F20,  13F55, 13A15, 05E40}

		\thanks{Keywords:  $\mathrm{v}$-number,  powers of edge ideal, cubic circulant graph}

		
		
		\maketitle
		\begin{abstract}
			In this paper, we provide  the exact formula for the $\mathrm{v}$-number of powers of the edge ideal of a cubic circulant graph $C_{2n}(a,n)$ with  $1\le a<n$.
\end{abstract}
		\setcounter{tocdepth}{1}

		\section{Introduction}
		Let $R=\KK[x_1,\dots, x_n]=\bigoplus\limits_{d\ge 0} R_d$ be the standard graded polynomial ring  over a field $\KK$.
		For a graded ideal $I \subseteq R$, let $\operatorname{Ass}(I)$ denote the set of all associated prime ideals of $I$.
		For  each $\frak{p}\in \operatorname{Ass}(I)$,
		the \emph{local $\mathrm{v}$-number} of $I$ at $\frak{p}$
		is defined as
		\[
		\mathrm{v}_{\mathfrak{p}}(I)= \min\{d\ge 0 \mid \text{\ there exists \ } f\in R_d \text{\ such that\ } (I : f)=\mathfrak{p}\}.
		\]
		The \emph{$\mathrm{v}$-number}
		of $I$, denoted by $\mathrm{v}(I)$, is then defined as
		\[
		\mathrm{v}(I)= \min\{\mathrm{v}_{\mathfrak{p}}(I)\mid \mathfrak{p}\in \operatorname{Ass}(I)\}.
		\]
Originally introduced in \cite{CSTVV} to analyze  the asymptotic behavior of the minimum distance of projective Reed-Muller type codes,
the $\mathrm{v}$-number serves as a crucial algebraic invariant connecting coding theory, commutative algebra, and combinatorics.
This invariant has since spurred considerable interest in commutative algebra and algebraic combinatorics, leading to several significant applications. There has been substantial progress in understanding this invariant across different contexts (see \cite{Civan, Conca, FG,GRV,HZL,KNS}). Regarding the asymptotic behavior of $\mathrm{v}$-numbers, Ficarra and Sgroi \cite{FS} and Conca \cite{Conca} independently established that $\mathrm{v}(I^t)$ is eventually linear in $t$ for any graded ideal $I$ in a polynomial ring (and more generally, in an $\mathbb{N}$-graded Noetherian domain \cite{Conca}). We refer the reader to \cite{BMS, F, FS, FG,KNS} for further developments in this direction. In particular, Hien and Vu in \cite{HV} gave a combinatorial description of the stabilization index of the $\mathrm{v}$-function of powers of edge ideals for arbitrary graphs.

	Let $\Gamma$ be a simple graph with vertex set $V(\Gamma)=\{x_1,\ldots,x_n\}$ and edge set $E(\Gamma)$. The \textit{edge ideal} of $\Gamma$, introduced in \cite{V}, is denoted by $I(\Gamma)$ and defined as the ideal of the polynomial ring $R=\mathbb{K}[x_1,\ldots,x_n]$ over a field $\mathbb{K}$ generated by the monomials corresponding to the edges of $\Gamma$. Explicitly,
		\[
		I(\Gamma)=(x_i x_j\mid x_ix_j\in E(\Gamma)).
		\]
The $\mathrm{v}$-number of $I(\Gamma)$ is a fundamental combinatorial invariant of $\Gamma$ that has been used to characterize key topological and structural properties of graphs, such as the family of $W_2$-graphs. Jaramillo and Villarreal in \cite[Theorem 4.5]{JV} showed that for a connected graph $\Gamma$, it is in $W_2$ if and only if $\mathrm{v}(I(\Gamma)) = \dim(R/I(\Gamma))$. Furthermore, determining exact formulas for $\mathrm{v}$-numbers of powers of edge ideals remains an active and challenging task. Chau et al. in    \cite{CHJV}  showed that for a path $P_n$ with $n \ge 2$ vertices and a cycle $C_n$ with $n \ge 3$ vertices,
the $\mathrm{v}$-number of powers of their edge ideals are
\begin{align*}
	\mathrm{v}(I(P_n)^t) &= 2t - 1 + \max \left\{0, \left\lceil \frac{n - 2t - 3}{4} \right\rceil \right\}, \\
\mathrm{v}(I(C_n)^t) &= 2t - 1 + \max \left\{0, \left\lceil \frac{n - 2t - 2}{4} \right\rceil \right\}.
\end{align*}

Circulant graphs have wide applications in coding theory, interconnection networks, and combinatorial designs \cite{CJ, K, M}. Algebraic studies of edge ideals of circulant graphs have been carried out in \cite{AHPZ, BH, HMMP, R, UV, VVW}. Recall that for a positive integer $n \ge 3$ and a subset $S \subseteq \{1, \dots, \lfloor n/2 \rfloor\}$, the circulant graph generated by $S$, denoted by $C_n(S)$, is the graph whose vertex set is $V(C_n(S)) = \{x_1,x_2,\ldots,x_n\}$, in which two distinct vertices $x_i$ and $x_j$ are adjacent if and only if either $|i-j| \in S$ or $n - |i-j| \in S$. In particular, if $S = \{1, \dots, \lfloor \frac{n}{2} \rfloor\}$, then $C_n(S) \cong K_n$, the complete graph on $n$ vertices.
By Fröberg's theorem \cite{Fr}, the edge ideal $I(K_n)$ has a linear resolution, from which it follows from \cite[Theorem 5.1]{F} that $\mathrm{v}(I(K_n)^t)=2t-1$ for all $t\ge 1$. On the other hand, if $S = \{1\}$, then $C_n(1) \cong C_n$, whose $\mathrm{v}$-number formula for powers was completely determined in \cite{CHJV} as mentioned above.

Despite recent progress, explicitly computing the $\mathrm{v}$-number for broader classes of circulant graphs remains highly non-trivial due to their intricate topological structures. A cubic circulant graph is a 3-regular circulant graph of the form $C_{2n}(S)$ with $S = \{a, n\}$ for some integer $a$ such that $1 \le a < n$, denoted by $C_{2n}(a,n)$. Davis and Domke \cite{DD} proved that cubic circulant graphs can be decomposed into disjoint unions of certain copies of circulant graphs of the form $C_{2m}(1,m)$ and $C_{2m}(2,m)$ for odd $m > 2$ (see Lemma \ref{decomposition}).

The main goal of this paper is to bridge this gap by providing precise combinatorial and algebraic formulas for the $\mathrm{v}$-number of cubic circulant graphs. Our main result is as follows:

	\begin{Theorem}\label{main}
		Let $\Gamma=C_{2n}(a,n)$ be a cubic circulant graph with
		$1\leq a<n$, and let $I(\Gamma)$ be its edge ideal. Let
  $k=\gcd(a,n)$, $m=\frac{n}{k}$, $b=\frac{a}{k}$ and
		\[
		\rho=
		\begin{cases}
			m,   & \text{if both $m$ and $b$ are odd,}\\
			m+1, & \text{otherwise.}
		\end{cases}
		\]
		Then, for all integer $t\geq 1$,
		\[
		\begin{aligned}
			\mathrm{v}(I(\Gamma)^t) = \max\Big\{ & 2t+k-2,\; t-1+k\left\lceil\rho/3\right\rceil,\; \left\lceil\frac{4(t-1)+k\rho}{3}\right\rceil, \\
			& \left\lceil\frac{3(t-1)+k\left(\left\lfloor\rho/2\right\rfloor+1\right)}{2}\right\rceil \Big\}.
		\end{aligned}
		\]
		\end{Theorem}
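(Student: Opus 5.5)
By the Davis--Domke decomposition (Lemma \ref{decomposition}), $\Gamma=C_{2n}(a,n)$ is a disjoint union of $k$ copies of one cubic circulant graph $H$ on $2m$ vertices. Since $\gcd(b,m)=1$, $H\cong C_{2m}(1,m)$ when $m$ or $b$ is even, and $H\cong C_{2m}(2,m)$ when $m$ and $b$ are both odd. The proof therefore has two parts: compute $\mathrm{v}$ of powers of $I(H)$ for a connected cubic circulant $H$, and then handle disjoint unions.

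For disjoint unions, I would use the known description of $\mathrm{v}(I(\Gamma)^t)$ for edge ideals. By the Jaramillo--Villarreal and Ficarra--Sgroi type characterizations (\cite{JV}, \cite{FS}, \cite{F}), $\mathrm{v}(I(\Gamma)^t)$ is the minimum of $\deg f$ over monomials $f$ with $(I(\Gamma)^t:f)=(N_\Gamma(A))$, where $A$ is an independent set. Such an $f$ decomposes as $f=f_0\cdot x_A$, with $f_0\in I(\Gamma)^{t-1}$ a product of edges (possibly repeated) satisfying suitable neighbourhood conditions. For a union $\Gamma=\bigsqcup_{i=1}^k H_i$, one then minimizes the total degree when the $t-1$ edge factors are distributed among the components. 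Each component needs its own independent set $A_i$ whose neighbourhood is a minimal vertex cover of $H_i$, in the sense that $A$ must be dominating in each component.

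The lower bounds come from an explicit count. An edge placed in one component can help dominate only vertices near it. A single edge of a cubic graph together with its neighbours covers a bounded number of vertices, and the independence requirement forces additional vertices. A weighted counting argument, charging each used edge and each vertex of $A$ against the vertices they dominate, yields the four terms.
\begin{itemize}
\item The term $2t+k-2$ uses all $t-1$ edges in one component, plus one vertex per component.
\item The term $t-1+k\lceil\rho/3\rceil$ comes from domination in each component: the independent domination number of $H$ is $\lceil\rho/3\rceil$.
\item The terms with $4(t-1)$ and $3(t-1)$ come from the rates at which edges can replace independent vertices, namely a saving of $1/3$ or $1/2$ per edge relative to the $\rho$ or $\lfloor\rho/2\rfloor+1$ baseline.
\end{itemize}
The parameter $\rho$ enters because $C_{2m}(2,m)$ with $m$ odd behaves like a cycle of length $m$ wrapped twice, while $C_{2m}(1,m)$ behaves like a M\"obius ladder with $m+1$ effective positions.

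For the upper bound I would give explicit constructions matching each term. I would compute $\mathrm{v}(I(H)^s)$ for one component by generalizing the Chau et al.\ constructions for paths and cycles \cite{CHJV}: place $s-1$ edges consecutively, packed along the ladder or cycle structure, and dominate the rest with a periodic independent set. I would then optimize the distribution $t-1=\sum_i(s_i-1)$ across the $k$ components. Since the per-component cost is piecewise linear and convex in $s_i$, the optimum is attained either by balancing the $s_i$ or by concentrating everything in one component, which is what produces the max of four expressions.

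The main obstacle is the lower bound for a single component $H$. One must show that no cleverer monomial $f$ beats the construction, that is, one must establish an exact formula for $\mathrm{v}(I(C_{2m}(1,m))^s)$ and $\mathrm{v}(I(C_{2m}(2,m))^s)$. I would first prove the inequality $\mathrm{v}(I^{s})\ge \mathrm{v}(I^{s-1})+1$, and also the bound $\ge \mathrm{v}(I^{s-1})+2$ in the appropriate regime, by induction on $s$ via colon ideals $(I^s:e)=I^{s-1}$-type reductions. Combined with the base case $s=1$, this reduces the problem to analyzing which independent sets $A$ with $N(A)$ a cover can be partially replaced by edges. The case analysis on small $m$ and on $m \bmod 3$ (hidden in the ceilings) is where errors are most likely, and I would verify those cases separately.
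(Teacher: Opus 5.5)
\textbf{Setup and combination step.} Your reduction to components is misassigned, and your combination step rests on a false claim. Since $\gcd(b,m)=1$, one has $\gcd(a,2n)=k\gcd(b,2)$. So by Lemma~\ref{decomposition}, $\Gamma$ is $k$ copies of $C_{2m}(1,m)$ when $b$ is odd, and $k$ copies of $C_{2m}(2,m)$ when $b$ is even (then $m$ is odd). When $m$ and $b$ are both odd, the component is therefore the \emph{bipartite} graph $C_{2m}(1,m)$ with $\tau=m$ (e.g.\ $C_6(1,3)\cong K_{3,3}$). The prism $C_{2m}(2,m)$, two odd $m$-cycles joined by a matching, has $\tau=m+1$. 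You have swapped these two cases, and your heuristic explanation of $\rho$ is backwards.

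Also, $\lceil\rho/3\rceil$ is not the independent domination number: $K_{3,3}$ has independent domination number $3$ but $\mathrm{v}(I)=1$. The value comes from $\tau\le|N(A)|\le 3|A|$.

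Next, put $u=s-1$. The single-component value is $2u+g(u)$ with $g(u)=\max\{1,\lceil(\rho-2u)/3\rceil\}$, and this is \emph{not} convex: for $\rho=13$ the values are $5,6,7,9,10,11,13,\dots$. Concretely, take $\Gamma=C_{42}(6,21)$ (three copies of $C_{14}(2,7)$, $\rho=8$) and $t=7$:
\[
(2,2,2)\mapsto 18,\qquad (6,0,0)\mapsto 19,\qquad (3,2,1)\mapsto 17 .
\]
So the balanced split of the six extra powers gives $18$, the concentrated split gives $19$, and $(3,2,1)$ gives $17$, which is the value of the formula. Your ``balance or concentrate'' conclusion therefore fails. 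The four terms are really $2(t-1)$ plus the sums, over the components, of the four affine minorants $1$, $\lceil\rho/3\rceil-u$, $(\rho-2u)/3$, $(\lfloor\rho/2\rfloor+1-u)/2$ of $g$, combined through Lemma~\ref{sum}. One must then still check that the maximum is attained.

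\textbf{The missing lower bound.} The decisive gap is the single-component bound $\mathrm{v}(I(H)^s)\ge 2s-2+\lceil(\tau(H)-2s+2)/3\rceil$, which you defer to an induction on increments via ``$(I^s:e)=I^{s-1}$-type reductions''. Such colons are not $I^{s-1}$; they acquire even-connected generators. Moreover, the increments ($1$ or $2$ depending on $\rho-2s$ modulo $3$, before stabilizing at $2$) are not a local phenomenon but the shadow of one global inequality.

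Your description of the witnesses is also wrong for $t\ge 2$. The prime is not $(N_\Gamma(A))$; read literally, that would give the false bound $2(t-1)+\lceil\tau/3\rceil$, e.g.\ $11>9$ for $\rho=13$, $t=4$. It is $(N_{H'}(\operatorname{supp} g))$, where $f=g\,e_1\cdots e_{t-1}$ (possible since $f\in I^{t-1}$ by Lemma~\ref{t-1edges}) and $H'$ is the graph of $(I^t:e_1\cdots e_{t-1})$.

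What is needed is the argument of Lemma~\ref{lowbound1}, in two steps.
\begin{enumerate}
\item Below the stabilization threshold this colon is the edge ideal of a \emph{simple} graph $H'\supseteq\Gamma$. A generator $x_i^2$ would require an odd closed walk whose alternate edges are at most $t-1$ of the $e_j$. But odd cycles have length at least $n+1$ in $C_{2n}(1,n)$ for $n$ even, at least $n$ in $C_{2n}(2,n)$ for $n$ odd, and do not exist in $C_{2n}(1,n)$ for $n$ odd. Hence $g$ is squarefree and $|N_{H'}(\operatorname{supp} g)|\ge\tau$.
\item One has $|N_{H'}(\operatorname{supp} g)\setminus N_\Gamma(\operatorname{supp} g)|\le 2(t-1)$, because each distinct edge $a_jb_j$ among the $e_i$ creates new neighbours through only one endpoint. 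Otherwise some $v_1,v_2\in\operatorname{supp} g$ lie in the colons by $f_jb_j$ and $f_ja_j$ respectively, where $f_j$ is the product of the other distinct edges. The intersection of these two colons is $I(H')$, so $v_1v_2\in I(H')$ divides $g$, contradicting $(I(H'):g)=\mathfrak p$.
\end{enumerate}
With $3$-regularity this yields $3\deg g+2(t-1)\ge\tau$, which is the whole lower bound. Your ``weighted counting'' never explains why an edge contributes at most $2$ new vertices rather than $4$, and that is exactly the crux.
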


The paper is organized as follows: Section \ref{sec:prelim} introduces the basic facts and preliminary definitions that are used throughout the paper. In section \ref{sec: C{2n}(a,n)} and \ref{sec: C{2n}(1,n)}, we provide exact formulas for the $\mathrm{v}$-numbers of powers of the edge ideals of $\Gamma =C_{2n}(a,n)$ with $1\le a<n$.

\section{Preliminaries}
		\label{sec:prelim}
		
		In this section, we introduce the notation and recall several basic results that will be used throughout the paper.
		Throughout this paper, we adopt the  following symbols: for two non-negative integers  $m,n$,
		let
		\[
		[m,n]=\begin{cases}
			\{m,m+1,\ldots,n\},   &\text{if $m\le n$,}\\
		\emptyset, &\text{if $m> n$. }
		\end{cases}
		\]
 By 	convention, we write $[n]$ for $[1,n]$ and $\prod\limits_{i\in A}x_i=1$ if $A=\emptyset$.

		Let $\Gamma$ be a  finite simple graph with  vertex set $V(\Gamma)$  and edge set $E(\Gamma)$.
		For a vertex $v\in V(\Gamma)$,  its  \emph{neighborhood}  is defined by $N_\Gamma(v)=\{u \in V(\Gamma)\mid uv\in E(\Gamma)\}$. For a subset $A\subseteq V(\Gamma)$, its  \emph{neighborhood}   is defined by $N_\Gamma(A)=\bigcup_{v\in A}N_\Gamma(v)$.
		A subset $C\subseteq V(\Gamma)$ is called a \emph{vertex cover} of $\Gamma$ if every edge of $\Gamma$ has at least one endpoint in $C$. A vertex cover $C$ is called \emph{minimal} if no proper subset of $C$ is a vertex cover of $\Gamma$.
		The \emph{vertex cover number} of $\Gamma$, denoted by $\tau(\Gamma)$, is defined as
			\[
		\tau(\Gamma) = \min\{ |C| : C \text{ is a vertex cover of } \Gamma \}.
		\]
		A subset $S\subseteq V(\Gamma)$ is called an \emph{independent set} of $\Gamma$ if no two distinct vertices in $S$ are adjacent. An independent set $S$ of $\Gamma$ is called \emph{maximal} if every vertex in $V(\Gamma)\setminus S$ is adjacent to at least one vertex in $S$.
		The \emph{independence number} of $\Gamma$, denoted by $\alpha(\Gamma)$, is defined as
		\[
		\alpha(\Gamma) = \max\{ |S| \mid S \text{ is an independent set of } \Gamma \}.
		\]
		It is well known that a set $C\subseteq V(\Gamma)$ is a minimal vertex cover of $\Gamma$ if and only if $V(\Gamma)\setminus C$ is a maximal  independent set of $\Gamma$.
		For any subset $A$ of $V(\Gamma)$, $\Gamma[A]$ denotes the \emph{induced subgraph} of $\Gamma$ on the set $A$, i.e.,
		for any $u,v \in A$, $uv \in E(\Gamma[A])$ if and only if $uv\in E(\Gamma)$.

A \textit{walk} in $\Gamma$ is a sequence of vertices	$W=(x_{i_0},x_{i_1},\ldots,x_{i_k})$
		such that $x_{i_{j-1}}x_{i_j}\in E(\Gamma)$ for each $j\in  [k]$. A \textit{closed walk} is a walk whose initial and terminal vertices coincide, that is, $x_{i_0}=x_{i_k}$. The length of a walk is the number
		of its edges. A walk is called odd (resp. even) if its length is odd (resp. even). A \textit{cycle} of length $k$ in $\Gamma$ is a closed walk
	$C=(x_{i_0},x_{i_1},\ldots,x_{i_k}=x_{i_0})$
	such that $x_{i_0},x_{i_1},\ldots,x_{i_{k-1}}$ are pairwise distinct and $k\ge 3$.

	Let $a,n$ be two positive integers such that $n>a$.  The cubic circulant graph $\Gamma = C_{2n}(a,n)$ has vertex set $V(\Gamma) = \{x_i \mid i\in[2n]\}$, where $x_ix_j$ is an edge if and only if $|i-j| \in \{a, n, 2n-a\}$. Note that $\Gamma$ is connected if and only if  $\gcd(a,n) = 1$. Davis and Domke \cite{DD} proved the following structural result.
		
		\begin{Lemma} \label{decomposition}
			Let $\Gamma = C_{2n}(a,n)$ with $1 \le a < n$, and let $k = \gcd(a,2n)$.
			\begin{itemize}
				\item[(i)]   If $\frac{2n}{k}$ is even, then $\Gamma $ is isomorphic to $k$ copies of $C_{\frac{2n}{k}}(1, \frac{n}{k})$.
				\item[(ii)]  If $\frac{2n}{k}$ is odd, then $\Gamma $ is isomorphic to $\frac{k}{2}$ copies of $C_{\frac{4n}{k}}(2, \frac{2n}{k})$.
			\end{itemize}
		\end{Lemma}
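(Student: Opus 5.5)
The statement is the Davis--Domke decomposition (Lemma \ref{decomposition}). I would prove it by regarding $\Gamma$ as a Cayley graph. Identify $V(\Gamma)$ with $\mathbb{Z}_{2n}$ via $x_i\mapsto i \bmod 2n$. Then $\Gamma$ is the Cayley graph $\mathrm{Cay}(\mathbb{Z}_{2n},\{\pm a, n\})$: $i$ is adjacent to $i\pm a$ and to $i+n$. The connected components of a Cayley graph are the cosets of the subgroup generated by the connection set. So the plan is:
\begin{itemize}
\item identify the subgroup $H=\langle a,n\rangle\le \mathbb{Z}_{2n}$;
\item show that every coset of $H$ induces a copy of $\mathrm{Cay}(H,\{\pm a,n\})$, via $h\mapsto j+h$;
\item find an explicit isomorphism $H\cong \mathbb{Z}_M$ carrying the generators $\pm a, n$ to the generators of the claimed circulant.
\end{itemize}

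Put $k=\gcd(a,2n)$ and $N=2n/k$. Then $\langle a\rangle=\langle k\rangle$ has order $N$, so $a$ has order exactly $N$.

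\emph{Case (i): $N$ even.} Then $k\mid n$, so $n\in\langle a\rangle$ and $H=\langle a\rangle$ has index $k$. This gives exactly $k$ components. The map $\varphi:\mathbb{Z}_N\to H$, $s\mapsto sa$, is a group isomorphism. Let $s_0$ be the unique class with $s_0a=n$. Since $2n=0$ and $n\neq 0$ in $\mathbb{Z}_{2n}$, we get $2s_0\equiv 0$ and $s_0\not\equiv 0 \pmod N$, hence $s_0=N/2=n/k$. Pulling back along $\varphi$, the connection set becomes $\{\pm1, N/2\}$. Thus each component is $C_N(1,N/2)=C_{2n/k}(1,n/k)$.

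It remains to check that this really is a cubic circulant, i.e.\ $1<N/2$. Equivalently $N\ge 4$. If $N=2$, then $k=n$ would divide $a$ with $1\le a<n$, which is impossible.

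\emph{Case (ii): $N$ odd.} Then $k\nmid n$, so $n\notin\langle a\rangle$. Since $2n=0$, $n$ has order $2$ and $\langle a\rangle\cap\langle n\rangle=0$. Hence $H=\langle a\rangle\oplus\langle n\rangle\cong \mathbb{Z}_N\times\mathbb{Z}_2$, of order $2N$ and index $k/2$ (note $k$ is even here). This gives $k/2$ components.

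Because $N$ is odd, $\mathbb{Z}_N\times\mathbb{Z}_2\cong\mathbb{Z}_{2N}$. Explicitly, define $\psi:H\to\mathbb{Z}_{2N}$ by $sa+\epsilon n\mapsto 2s+\epsilon N$. It is well defined, since $2\cdot N\equiv 0$ and $2N\equiv0$. It is injective: $2s+\epsilon N\equiv0$ forces $\epsilon=0$ by parity of $N$, and then $N\mid s$. So $\psi$ is an isomorphism sending $\pm a\mapsto\pm2$ and $n\mapsto N$. Each component is therefore $\mathrm{Cay}(\mathbb{Z}_{2N},\{\pm2,N\})=C_{2N}(2,N)=C_{4n/k}(2,2n/k)$.

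Here too one checks $2<N$: $N=1$ would force $k=2n>a$, which is impossible.

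The only genuinely delicate step is the bookkeeping of orders in $\mathbb{Z}_{2n}$. In (i) this means pinning down $s_0=N/2$; in (ii) it means the trivial intersection $\langle a\rangle\cap\langle n\rangle=0$ and the well-definedness of $\psi$. These are what guarantee that the relabelled connection sets are exactly $\{\pm1,N/2\}$, respectively $\{\pm2,N\}$, with no collapse of generators. Once these are settled, the isomorphism of graphs follows immediately from the Cayley graph description.
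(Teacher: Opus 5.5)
Your argument is correct, but there is no proof in the paper to follow step by step: the lemma is quoted from Davis and Domke \cite{DD} and used as a black box. Your Cayley-graph proof is therefore a complete, self-contained replacement rather than a reconstruction. The components are the cosets of $H=\langle a,n\rangle\le\mathbb{Z}_{2n}$, and all of the order bookkeeping you flag is sound:
\begin{itemize}
\item[(i)] $s_0=N/2$;
\item[(ii)] $k$ is even, $\langle a\rangle\cap\langle n\rangle=0$, and $\psi$ is well defined and injective;
\end{itemize}
and $N\ge 4$, resp.\ $N\ge 3$, so the targets are genuine cubic circulants.

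What your route gives over the citation is visibility into how the lemma is used later. In case (ii), the splitting $H=\langle a\rangle\oplus\langle n\rangle\cong\mathbb{Z}_N\times\mathbb{Z}_2$ already exhibits each component as two $N$-cycles joined by rungs. That is exactly the relabelled form (\ref{eq:myequation5}) the paper adopts for $C_{2n}(2,n)$ with $n$ odd.

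Moreover, since $H=\langle a,n\rangle=\langle\gcd(a,n)\rangle$, your description shows that the number of components is $\gcd(a,n)$ in both cases; in (ii) one has $k=\gcd(a,2n)=2\gcd(a,n)$. Case (i) occurs exactly when $b=a/\gcd(a,n)$ is odd. Case (ii) occurs exactly when $b$ is even, which forces $m=n/\gcd(a,n)$ to be odd. This reconciles the lemma's $k=\gcd(a,2n)$ with the parameters $k=\gcd(a,n)$, $m$, $b$, $\rho$ in Theorem \ref{main}, a translation the paper leaves to ``direct calculation.'' The paper's citation buys brevity and nothing more.
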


			\begin{Lemma}{\em (\cite[Theorem 4.1]{FM} and \cite[Theorem 4.1(3)]{NT})}
				Let $R_1 = \KK[x_1,\ldots,x_m]$,  $R_2 = \KK[x_{m+1},\ldots,x_n]$ and  $R =\KK[x_1,\ldots,x_m,x_{m+1},\ldots,x_n]$ be three polynomial rings over a field $\KK$,  let
				$I_1 \subseteq R_1$ and $I_2 \subseteq R_2 $ be two monomial ideals, and let $t\ge 1$ be an integer. Then
\[
\operatorname{Ass}_R \bigl((I_1+I_2)^t\bigr) = \bigcup\limits_{i=1}^t \bigcup\limits_{\substack{\mathfrak{p} \in \operatorname{Ass}_{R_1}(I_1^i) \\ \mathfrak{q} \in \operatorname{Ass}_{R_2}(I_2^{t-i+1})}} \{\mathfrak{p} + \mathfrak{q}\}.
\]
Furthermore, if $\mathfrak{p}$ and $\mathfrak{q}$ are prime ideals in $R_1$ and $R_2$, respectively, such that
    $\mathfrak{p} + \mathfrak{q} \in \operatorname{Ass}\bigl((I_1 + I_2)^t\bigr)$, then
    \[
					\mathrm v_{\mathfrak{p}+\mathfrak{q}}\bigl((I_1+I_2)^t\bigr) = \min_{\substack{1 \le i \le t \\ \mathfrak{p} \in \operatorname{Ass}_{R_1}(I_1^{i}) \\ \mathfrak{q} \in \operatorname{Ass}_{R_2}(I_2^{t-i+1})}} (\mathrm v_{\mathfrak{p}}(I_1^{i}) + \mathrm v_{\mathfrak{q}}(I_2^{t-i+1})).
	\]
			\end{Lemma}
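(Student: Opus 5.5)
The plan is to reduce everything to monomial computations through a decomposition of $(I_1+I_2)^t$ into pieces that each split along the two disjoint sets of variables. First I would establish
\[
(I_1+I_2)^t=\bigcap_{i=1}^{t}\bigl(I_1^{i}+I_2^{t-i+1}\bigr).
\]
Since $(I_1+I_2)^t=\sum_{i}I_1^iI_2^{t-i}$, it suffices to check the equality on monomials. A monomial $u v$ with $u\in R_1$ and $v\in R_2$ lies in $I_1^iI_2^{t-i}$ exactly when $u\in I_1^i$ and $v\in I_2^{t-i}$, because the variables are disjoint. So both sides are described by the same condition: with $\alpha=\max\{i: u\in I_1^i\}$ and $\beta=\max\{j: v\in I_2^j\}$, each side contains $uv$ iff $\alpha+\beta\ge t$.

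For the associated primes I would use the filtration of $R/(I_1+I_2)^t$ whose successive quotients are $(I_1^{i-1}/I_1^{i})\otimes_{\KK}(R_2/I_2^{t-i+1})$ for $i=1,\dots,t$. This filtration comes from the same bigraded description: a monomial $uv$ with $\alpha=i-1$ survives exactly when $v\notin I_2^{t-i+1}$. Over a field, the associated primes of $M\otimes_\KK N$ for finitely generated modules $M$ over $R_1$ and $N$ over $R_2$ in disjoint variables are the sums $\mathfrak p+\mathfrak q$ with $\mathfrak p\in\Ass M$ and $\mathfrak q\in\Ass N$; for monomial modules this can be checked directly. Using $\Ass(I_1^{i-1}/I_1^i)\subseteq\Ass(R_1/I_1^i)$ gives the inclusion ``$\subseteq$''. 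The reverse inclusion I would get from the colon computation in the next step, which will exhibit an explicit witness for each $\mathfrak p+\mathfrak q$.

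For the $\mathrm v$-numbers, recall that for a monomial ideal the minimum in $\mathrm v_{\mathfrak P}$ is attained by a monomial $f$. Write $f=uv$ with $u\in R_1$ and $v\in R_2$. Colon distributes over the intersection, so
\[
\bigl((I_1+I_2)^t:uv\bigr)=\bigcap_{s=1}^{t}\bigl((I_1^s:u)+(I_2^{t-s+1}:v)\bigr),
\]
where each colon splits because the two summands live in disjoint variables. Let $i$ be the least index with $u\notin I_1^i$ and $j$ the least index with $v\notin I_2^j$. The $s$-th term equals $R$ unless $i\le s\le t-j+1$.
\begin{itemize}
\item If $i+j=t+1$, exactly one term survives, namely $(I_1^i:u)+(I_2^j:v)$. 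This is a prime $\mathfrak p+\mathfrak q$ iff both summands are the primes $\mathfrak p$ and $\mathfrak q$. This gives the witness for ``$\supseteq$'' and the upper bound $\mathrm v_{\mathfrak p}(I_1^i)+\mathrm v_{\mathfrak q}(I_2^{t-i+1})$.
\item If $i+j\le t$, several terms survive. The colon is then a finite intersection of monomial ideals, and a prime that equals such an intersection must equal one of the terms, say the $s$-th.
\end{itemize}

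The main obstacle is the second case. There I would need to replace $(u,v)$ by a monomial of no larger degree that realizes $(I_1^s:u)=\mathfrak p$ and $(I_2^{t-s+1}:v)=\mathfrak q$ for a single index $s$. The issue is that at index $s$ we only know $u\notin I_1^s$, not that $u\in I_1^{s-1}$. I would first try to show directly that $(I_1^s:u)=\mathfrak p$, $(I_2^{t-s+1}:v)=\mathfrak q$ and $\mathfrak p\in\Ass(I_1^s)$, $\mathfrak q\in\Ass(I_2^{t-s+1})$ all hold for that $s$. The idea is that the neighbouring terms for $s\pm1$ contain the $s$-th term, which forces each summand of the $s$-th term to be a prime. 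Then $\deg f=\deg u+\deg v$ is at least the claimed minimum, which proves the lower bound and hence the formula.
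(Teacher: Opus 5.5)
The paper does not prove this lemma; it quotes it from the literature, so your argument has to stand on its own. Much of it does. The decomposition $(I_1+I_2)^t=\bigcap_{s=1}^t(I_1^s+I_2^{t-s+1})$ is correct, as is the splitting $((I_1+I_2)^t:uv)=\bigcap_s\bigl((I_1^s:u)+(I_2^{t-s+1}:v)\bigr)$ and your description of which terms survive.

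\textbf{The gap is in the step you treat as easy.} Your first bullet gives ``$\supseteq$'' and the upper bound only under an unstated assumption. You must start from an arbitrary $\mathfrak p\in\Ass(I_1^i)$ and $\mathfrak q\in\Ass(I_2^{t-i+1})$ with optimal monomial witnesses $u,v$. For the bullet to apply, the least indices attached to $u$ and $v$ must be exactly $i$ and $t-i+1$, that is, $u\in I_1^{i-1}$ and $v\in I_2^{t-i}$. You never prove this. Without it you land in your second bullet. For instance, if $u\notin I_1^{i-1}$, the term for $s=i-1$ survives. Its $R_2$-part $(I_2^{t-i+2}:v)\subseteq\mathfrak q$ can be strictly smaller than $\mathfrak q$, and then $uv$ is not a witness for $\mathfrak p+\mathfrak q$. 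This is exactly the difference between $\Ass(R_1/I_1^i)$ and $\Ass(I_1^{i-1}/I_1^i)$, which is why results on sums of general ideals are stated with the latter.

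\textbf{The missing fact has a one-line proof for monomial ideals.} It is the argument of Lemma \ref{t-1edges}. Suppose $(I^i:u)=\mathfrak p\neq 0$, and pick a variable $x\in\mathfrak p$. Write $xu=w_1\cdots w_i h$ with $w_r\in\mathcal G(I)$. Since $u\notin I^i$, we have $x\nmid h$, so $x\mid w_\ell$ for some $\ell$, and therefore $u\in I^{i-1}$. This needs $I_1,I_2\neq 0$, and that hypothesis is genuinely required. For example, with $I_1=0$ the stated formula for $\Ass$ fails whenever $\Ass(I_2^j)\not\subseteq\Ass(I_2^t)$ for some $j<t$.

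\textbf{The case you call the main obstacle is not one.} Suppose $\mathfrak P=(I_1^s:u)R+(I_2^{t-s+1}:v)R$ for some surviving $s$. Both summands are proper monomial ideals in disjoint sets of variables, so the minimal generators of $\mathfrak P$ are the union of theirs. Since those generators are variables, $(I_1^s:u)=\mathfrak P\cap R_1=\mathfrak p$ and $(I_2^{t-s+1}:v)=\mathfrak P\cap R_2=\mathfrak q$ are monomial primes. By definition this says $\mathfrak p\in\Ass(I_1^s)$ and $\mathfrak q\in\Ass(I_2^{t-s+1})$. It also gives $\deg u\ge\mathrm v_{\mathfrak p}(I_1^s)$ and $\deg v\ge\mathrm v_{\mathfrak q}(I_2^{t-s+1})$, which is the lower bound. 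No replacement of $(u,v)$ is needed, no membership $u\in I_1^{s-1}$ is needed, and the ``neighbouring terms'' idea plays no role.

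Every associated prime of a monomial ideal has a monomial witness. So this same computation proves ``$\subseteq$'' directly, and you can drop the filtration and the tensor-product formula for associated primes, which you only asserted.
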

			
	By the lemma above, we have 	
		\begin{Lemma}
\label{sum}
		Let $R_1 = \KK[x_1,\ldots,x_m]$ and $R_2 = \KK[x_{m+1},\ldots,x_n]$ be two polynomial rings over a field $\KK$,  let
		$I_1 \subseteq R_1$ and $I_2 \subseteq R_2 $ be two monomial ideals, and let $t\ge 1$ be an integer. Then
			\[
			\mathrm v\bigl((I_1+I_2)^t\bigr)=\min\limits_{1\le j\le t}
				\{\mathrm v(I_1^j)+\mathrm v(I_2^{t+1-j})\}.
		\]
		\end{Lemma}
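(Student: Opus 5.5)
The plan is to unfold the definition of $\mathrm v$ as a minimum of local $\mathrm v$-numbers and then substitute the description of associated primes and local $\mathrm v$-numbers given in the preceding lemma (\cite[Theorem 4.1]{FM}, \cite[Theorem 4.1(3)]{NT}). After that substitution, the statement becomes a rearrangement of iterated minima. Throughout I assume $I_1,I_2$ are proper nonzero monomial ideals, so that all the sets of associated primes involved are nonempty and finite.

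\textbf{Step 1: rewrite $\mathrm v((I_1+I_2)^t)$ as a triple minimum.} By definition,
$\mathrm v((I_1+I_2)^t)=\min\{\mathrm v_P((I_1+I_2)^t)\mid P\in\operatorname{Ass}_R((I_1+I_2)^t)\}$.
By the first part of the preceding lemma, every such $P$ has the form $\mathfrak p+\mathfrak q$ with $\mathfrak p\in\operatorname{Ass}_{R_1}(I_1^i)$ and $\mathfrak q\in\operatorname{Ass}_{R_2}(I_2^{t-i+1})$ for some $1\le i\le t$. Conversely, every such sum occurs as an associated prime.

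All these primes are monomial primes, i.e.\ generated by variables. Hence the decomposition is forced by $P$: $\mathfrak p$ is generated by the variables of $P$ lying in $R_1$, and $\mathfrak q$ by those lying in $R_2$. So $P$ determines the pair $(\mathfrak p,\mathfrak q)$, although possibly for several indices $i$.

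The second part of the preceding lemma gives $\mathrm v_P$ as the minimum of $\mathrm v_{\mathfrak p}(I_1^i)+\mathrm v_{\mathfrak q}(I_2^{t-i+1})$ over exactly those $i$ for which $\mathfrak p\in\operatorname{Ass}(I_1^i)$ and $\mathfrak q\in\operatorname{Ass}(I_2^{t-i+1})$. Substituting, $\mathrm v((I_1+I_2)^t)$ equals
\[
\min_{1\le i\le t}\ \min_{\mathfrak p\in\operatorname{Ass}(I_1^i)}\ \min_{\mathfrak q\in\operatorname{Ass}(I_2^{t-i+1})}\bigl(\mathrm v_{\mathfrak p}(I_1^i)+\mathrm v_{\mathfrak q}(I_2^{t-i+1})\bigr).
\]
This holds because the index set of triples $(i,\mathfrak p,\mathfrak q)$ is exactly the disjoint union, over $P$, of the index sets appearing in the formula for $\mathrm v_P$.

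\textbf{Step 2: separate the inner minima.} For fixed $i$, the inner double minimum is over a product set of a sum of two independent terms. It therefore equals
\[
\min_{\mathfrak p}\mathrm v_{\mathfrak p}(I_1^i)+\min_{\mathfrak q}\mathrm v_{\mathfrak q}(I_2^{t-i+1})=\mathrm v(I_1^i)+\mathrm v(I_2^{t+1-i}).
\]
Renaming $i$ as $j$ then gives the claimed formula.

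\textbf{Main obstacle.} The only delicate point is the bookkeeping in Step 1. One must check that the triple minimum ranges over precisely the same data as ``first choose $P$, then take the minimum in the formula for $\mathrm v_P$.'' This needs two facts: the splitting $P=\mathfrak p+\mathfrak q$ is unique because the primes are monomial, and every admissible triple $(i,\mathfrak p,\mathfrak q)$ actually yields an associated prime of $(I_1+I_2)^t$, which is the reverse inclusion in the preceding lemma. Once these are in place, the rest is formal manipulation of minima.
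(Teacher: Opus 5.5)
Your proposal is correct and follows the paper's approach: the paper gives no argument beyond ``by the lemma above,'' and your Steps 1 and 2 are exactly that deduction from the preceding lemma. You partition the triples $(i,\mathfrak p,\mathfrak q)$ according to the prime $\mathfrak p+\mathfrak q$, which determines the pair $(\mathfrak p,\mathfrak q)$, and then split the inner minimum over the product set $\operatorname{Ass}(I_1^i)\times\operatorname{Ass}(I_2^{t-i+1})$.
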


	 Thus, by Lemmas \ref{decomposition} and  \ref{sum}, determining $\mathrm{v}(I(C_{2n}(a, n))^t)$ for $1 \le a < n$ reduces to considering the cases $a = 1$ and $a = 2$ when $n \ge 3$ is odd.

	\medskip
For a monomial $f = \prod\limits_{i\in[n]}x_i^{s_i} \in R$, we define its degree with respect to $x_{i}$ as $\deg_{x_{i}}(f) = s_{i}$, and its total degree as $\deg(f) = \sum\limits_{i=1}^n s_{i}$.
		For a prime ideal $\mathfrak{p}$ generated by a subset of the variables, we further define
		\[
		\deg_{\mathfrak{p}}(f) = \sum_{x_{i} \in \mathfrak{p}} \deg_{x_{i}}(f).
		\]

		\begin{Lemma}{\em (\cite[Proposition 2.2]{F})}
			\label{lowbound}
			Let $I \subset R$ be a monomial ideal. Then $\mathrm{v}_{\mathfrak{p}}(I) \ge \alpha(I) - 1$  for all  $\mathfrak{p} \in \Ass(I)$,
			where $\alpha(I) = \min \{ \deg(f) \mid f \in I \setminus \{0\} \}$.
		\end{Lemma}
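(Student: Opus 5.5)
The idea is to compare degrees directly. If $(I:f)=\mathfrak{p}$ for a homogeneous $f$, then multiplying $f$ by any variable in $\mathfrak{p}$ puts it into $I$. Because $I$ is monomial, this forces $\deg(f)+1\ge \alpha(I)$.

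First I dispose of degenerate cases. If $I=R$ then $\operatorname{Ass}(I)=\emptyset$ and there is nothing to prove. If $I=0$ then $\alpha(I)$ is not defined. So assume $I$ is a nonzero proper monomial ideal.

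Fix $\mathfrak{p}\in\operatorname{Ass}(I)$. Since $I$ is a monomial ideal, $\mathfrak{p}$ is generated by a subset of the variables. This subset is nonempty: $\mathfrak{p}\supseteq I\neq 0$. Choose a variable $x_i\in\mathfrak{p}$.

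Now let $d=\mathrm{v}_{\mathfrak{p}}(I)$ and pick $f\in R_d$ with $(I:f)=\mathfrak{p}$. Then $f\neq 0$: otherwise $(I:f)=R\neq\mathfrak{p}$. Since $x_i\in\mathfrak{p}=(I:f)$, we get $x_if\in I$. The element $x_if$ is a nonzero homogeneous polynomial of degree $d+1$.

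The key point is that a monomial ideal contains every term of each of its elements. So any monomial appearing in $x_if$ is a nonzero element of $I$ of degree $d+1$. By the definition of $\alpha(I)$ this gives $d+1\ge\alpha(I)$, that is, $\mathrm{v}_{\mathfrak{p}}(I)\ge\alpha(I)-1$.

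There is no real obstacle here. The only points needing care are that $\mathfrak{p}$ contains a variable and that $f\ne0$, which is why the degenerate cases $I=0$ and $I=R$ are separated out at the start.
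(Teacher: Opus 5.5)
Your proof is correct. Take $f\in R_d$ with $(I:f)=\mathfrak{p}$ and $d=\mathrm{v}_{\mathfrak{p}}(I)$, and let $x_i$ be a variable in the nonzero monomial prime $\mathfrak{p}$. Then $x_if$ is a nonzero element of $I$ of degree $d+1$, so $\alpha(I)\le d+1$. Your handling of $f\neq 0$ and of the degenerate cases $I=0$ and $I=R$ is also sound.

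The paper gives no proof of this lemma; it simply cites \cite[Proposition 2.2]{F}. Your direct degree comparison is the natural argument for it. It works for any homogeneous witness $f$, not only monomial ones, so you do not need the fact that monomial witnesses suffice.
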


	\begin{Lemma}{\em (\cite[Theorem 3.5]{JV})}
		\label{vedgeideal}
		Let $\Gamma$ be a simple graph and let $I$ be its  edge ideal. Let
		\[
		\mathcal{M}_{\Gamma}=\{A \mid A \text{ is a maximal independent set of } \Gamma\}, \text{ and}
		\]
		\[
		\mathcal{A}_{\Gamma}=\{A \mid A \text{ is an independent set of } \Gamma \text{ and } N_{\Gamma}(A) \text{ is a minimal vertex cover of } \Gamma\}.
		\]
		Then
		\[
		\mathcal{M}_{\Gamma}\subseteq \mathcal{A}_{\Gamma} \text{ \ and \ } \mathrm{v}(I) = \min\{|A| : A \in \mathcal{A}_{\Gamma}\}.
		\]
	\end{Lemma}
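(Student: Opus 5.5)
The plan is to reduce everything to squarefree monomials supported on independent sets, and then read off the colon ideal combinatorially. First recall the standard fact that $\operatorname{Ass}(I)=\{\mathfrak p_C=(x_i\mid x_i\in C)\mid C \text{ a minimal vertex cover of }\Gamma\}$, since $I$ is squarefree and hence radical.

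The key computation is the following. Let $u$ be a monomial with $u\notin I$, and let $A=\supp(u)$. Because $I$ is squarefree, $u\notin I$ forces $A$ to be an independent set. I will then check directly that
\[
(I:u)=(x_j\mid x_j\in N_\Gamma(A))+I .
\]
Indeed, an edge $x_jx_l$ with $x_j\in A$ gives $x_l\in(I:u)$. Conversely, a monomial $g$ with $gu\in I$ either contains a variable from $N_\Gamma(A)$ or already contains an edge, since $A$ is independent. In particular $(I:u)$ depends only on $A$, and $\deg(u)\ge |A|=\deg(\prod_{x_i\in A}x_i)$.

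From this formula, $(I:u)$ is a prime $\mathfrak p_C$ exactly when $I\subseteq (N_\Gamma(A))$, i.e.\ when $N_\Gamma(A)$ is a vertex cover. In that case $(I:u)=(N_\Gamma(A))$, and this is an associated prime precisely when $N_\Gamma(A)$ is a minimal vertex cover. Hence $A\in\mathcal A_\Gamma$ yields $\mathrm v(I)\le|A|$.

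For the inclusion $\mathcal M_\Gamma\subseteq\mathcal A_\Gamma$, take $A$ maximal independent. Then $N_\Gamma(A)=V(\Gamma)\setminus A$. It is contained in $V\setminus A$ because $A$ is independent, and it contains $V\setminus A$ by maximality. Since the complement of a maximal independent set is a minimal vertex cover, $A\in\mathcal A_\Gamma$.

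The main obstacle is the reverse inequality: showing that a polynomial $f\in R_d$ with $(I:f)=\mathfrak p\in\operatorname{Ass}(I)$ can be replaced by a monomial of degree at most $d$. I would argue as follows.

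\begin{itemize}
\item Reduce $f$ modulo $I$, so that no term of $f$ lies in $I$. This is allowed because $(I:f)$ is unchanged and $f\ne 0$ since $(I:f)\ne R$.
\item For each $x_i\in\mathfrak p$ we have $x_if\in I$. Since $I$ is monomial, every term $x_iu$ lies in $I$, so $\mathfrak p\subseteq(I:u)$ for every term $u$ of $f$.
\item By the formula above, each $(I:u)=(N_\Gamma(\supp u))+I$ contains $\mathfrak p\supseteq I$. Hence $(I:u)=\mathfrak p+(N_\Gamma(\supp u))$, which is a prime generated by variables.
\item We also have $\bigcap_u (I:u)\subseteq (I:f)=\mathfrak p$. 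A finite intersection of ideals contained in a prime forces one of them to lie in $\mathfrak p$, so some term $u$ satisfies $(I:u)=\mathfrak p$.
\end{itemize}

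Finally, set $A=\supp(u)$. Then $A$ is independent and $(N_\Gamma(A))=\mathfrak p$ is a minimal vertex cover, so $A\in\mathcal A_\Gamma$ and $|A|\le\deg u=d$. Minimizing over $\mathfrak p$ and $f$ gives $\mathrm v(I)=\min\{|A|: A\in\mathcal A_\Gamma\}$.
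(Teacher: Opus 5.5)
Your proof is correct and complete. The paper does not prove this statement; it imports it from \cite[Theorem 3.5]{JV}, so there is no internal argument to match. The closest result in the paper is Lemma~\ref{NG}, which is the special case of your identity $(I:u)=(x_j\mid x_j\in N_\Gamma(\supp u))+I$ in which $(I:u)$ is already assumed to be an associated prime.

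What you add beyond that lemma is the passage from an arbitrary $f\in R_d$ to a monomial, which the definition of $\mathrm v$ actually requires. After discarding the terms of $f$ that lie in $I$, each remaining term $u$ satisfies $\mathfrak p\subseteq(I:u)$, because $I$ is monomial. Since $\bigcap_u(I:u)\subseteq(I:f)=\mathfrak p$ and $\mathfrak p$ is prime, some single term has $(I:u)=\mathfrak p$. Replacing $u$ by $\prod_{x_i\in\supp u}x_i$ then shows that the minimum is attained by a squarefree monomial on an independent set.

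This matches the overall shape of the argument in \cite{JV}: reduce to squarefree monomials $x_A$, then read off $(I:x_A)$ from $N_\Gamma(A)$. Your reduction step is elementary and self-contained and needs no general facts about $\mathrm v_{\mathfrak p}$.

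One small remark: in your third bullet, the observation that $(I:u)=\mathfrak p+(x_j\mid x_j\in N_\Gamma(\supp u))$ is prime is never used. The fact that a prime containing a finite intersection of ideals contains one of them holds for arbitrary ideals.
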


	\begin{Lemma}{\em (\cite[Lemma 2.2]{CHJV})}
		\label{colon}
		Let  $t$ be a positive  integer and let $I \subseteq R$ be a nonzero squarefree monomial ideal. Suppose $f$ is a monomial such that $\deg_\mathfrak{p}(f) = t - 1$ and $\mathfrak{p}\subseteq(I^t : f)$
		for some $\mathfrak{p}\in\Ass(I)$, then
		\[
		(I^t : f) = \mathfrak{p}.
		\]
	\end{Lemma}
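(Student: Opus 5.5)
The plan is to prove the two inclusions separately. The inclusion $\mathfrak{p}\subseteq (I^t:f)$ is a hypothesis, so everything rests on showing $(I^t:f)\subseteq \mathfrak{p}$. The key point is that passing from $I^t$ to the larger ideal $\mathfrak{p}^t$ already forces this inclusion, by a simple degree count in the variables of $\mathfrak{p}$.

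First I reduce to monomials. Since $I^t$ is a monomial ideal and $f$ is a monomial, the colon ideal $(I^t:f)$ is again a monomial ideal. It is therefore generated by the monomials $g$ with $gf\in I^t$. So it suffices to show that every such monomial $g$ lies in $\mathfrak{p}$.

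Next I record what is known about $\mathfrak{p}$. Because $I$ is squarefree, its associated primes are exactly its minimal primes, and each is generated by a subset of the variables. In particular $I\subseteq \mathfrak{p}$, and hence $I^t\subseteq \mathfrak{p}^t$. For an ideal $\mathfrak{p}$ generated by variables, a monomial $h$ lies in $\mathfrak{p}^t$ if and only if $\deg_{\mathfrak{p}}(h)\ge t$. This holds because the monomials of $\mathfrak{p}^t$ are exactly those divisible by a product of $t$ variables from $\mathfrak{p}$.

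Now suppose $g$ is a monomial with $gf\in I^t$ but $g\notin\mathfrak{p}$. Since $\mathfrak{p}$ is generated by variables, a monomial outside $\mathfrak{p}$ involves none of those variables, so $\deg_{\mathfrak{p}}(g)=0$. Then
\[
\deg_{\mathfrak{p}}(gf)=\deg_{\mathfrak{p}}(g)+\deg_{\mathfrak{p}}(f)=t-1<t,
\]
so $gf\notin \mathfrak{p}^t\supseteq I^t$, a contradiction. Hence every monomial of $(I^t:f)$ lies in $\mathfrak{p}$, and $(I^t:f)=\mathfrak{p}$.

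There is no genuine obstacle in this argument. The only step needing a moment's care is the characterization of monomials in $\mathfrak{p}^t$ via $\deg_{\mathfrak{p}}$, which is immediate for ideals generated by variables.
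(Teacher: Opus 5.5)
Your proof is correct and complete. The paper gives no proof of its own here (it quotes the result from \cite{CHJV}), and your argument is the standard one: since $I\subseteq\mathfrak{p}$ and $\deg_{\mathfrak{p}}(f)=t-1$, one has $(I^t:f)\subseteq(\mathfrak{p}^t:f)=\mathfrak{p}$. Squarefreeness is more than you need, because every associated prime of any monomial ideal is generated by variables and contains $I$.
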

	
	Let $\Gamma$ be a graph and   $e_1, \dots, e_t$  edges of $\Gamma$. We denote their \emph{$t$-fold product} by $\prod\limits_{i=1}^{t}e_i$.
	To analyze the generators of $(I(\Gamma)^{t+1} : \prod\limits_{i=1}^{t}e_i)$, we introduce the notion of \textit{even-connectedness} with respect to $t$-fold products.
	
		\begin{Definition}{\em (\cite[Definition 6.2]{B})}
		\label{even-connectedness}
		Two vertices $u$ and $v$ {\em ($u$ may be equal to $v$)} are said to be even-connected with respect to a $t$-fold product  $\prod\limits_{i=1}^{t}e_i$ if there is a walk $W:  x_1, x_2, \dots, x_{2k}$   with $k \ge 2$  in $\Gamma$ such that
		\begin{enumerate}
			\item $x_1 = u$ and $x_{2k} = v$.
			\item For each $\ell \in [k-1]$, $x_{2\ell}x_{2\ell+1} = e_i$ for some $i\in[t]$.
			\item For all $i\in[t]$,
			\[
			|\{\ell \in [k-1] \mid x_{2\ell} x_{2\ell+1} = e_i\}| \leq |\{j\in[t] \mid e_j = e_i\}|.
			\]
			\item For all $r \in [2k-1]$, $x_r x_{r+1}$ is an edge in $\Gamma$.
		\end{enumerate}
		If these properties are satisfied, then $x_1, x_2, \dots, x_{2k}$   is said to be an even-connection
		between $u$ and $v$ with respect to $\prod\limits_{i=1}^{t}e_i$.
	\end{Definition}
	
	\begin{Theorem}{\em (\cite[Theorems 6.1 and 6.7]{B})}
		\label{edgecolon}
		Let $t\ge 1$ be an integer,  and let  $\Gamma$ be   a graph and $e_1, \dots, e_t$ be its edges.  Then
		\[
		(I(\Gamma)^{t+1} : \prod\limits_{i=1}^{t}e_i)
		\]
		is generated by monomials of degree two. Moreover, every generator $uv$  {\em($u$ may be equal to $v$)} of $(I(\Gamma)^{t+1} : \prod\limits_{i=1}^{t}e_i)$ is either an edge of $\Gamma$ or even-connected with respect to $\prod\limits_{i=1}^{t}e_i$.
	\end{Theorem}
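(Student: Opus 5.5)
We would argue directly with a slot-counting and alternating-walk argument; no induction on $t$ is needed. Put $I=I(\Gamma)$ and $E=\prod_{i=1}^t e_i$, viewed as a multiset of $t$ edge occurrences. There are two inclusions.

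\emph{Every edge and every even-connected pair lies in $(I^{t+1}:E)$.} Edges are clear. Let $u=x_1,x_2,\dots,x_{2k}=v$ be an even-connection. The connecting edges $x_{2\ell}x_{2\ell+1}$, $\ell\in[k-1]$, are $k-1$ of the $e_i$, and condition (3) lets us regard them as distinct occurrences in the multiset. The walk also contains the $k$ edges $x_{2\ell-1}x_{2\ell}$, $\ell\in[k]$. This gives the monomial identity
\[
uv\prod_{\ell=1}^{k-1}x_{2\ell}x_{2\ell+1}=\prod_{\ell=1}^{k}x_{2\ell-1}x_{2\ell}.
\]
Multiplying by the $t-k+1$ remaining occurrences $e_i$ gives $uv\,E\in I^{t+1}$.

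\emph{Every monomial $m\in(I^{t+1}:E)$ is divisible by a degree-two element of the colon.} Write $mE=f_1\cdots f_{t+1}\,g$ with edges $f_j$. Consider the $2t+2$ variable positions (\emph{slots}) of $f_1\cdots f_{t+1}$. Each slot is labelled either by a slot of some occurrence $e_i$ or by a factor of $m$. We fix this labelling as an injection of the $f$-slots into the multiset of factors of $mE$, so that each $e$-slot is used at most once. The $e$-factors contribute only $2t$ slots, so at least two $f$-slots are labelled by factors of $m$.

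Pick one such slot, carrying a variable $u$ in the edge $f_{j_1}$, and build a walk.
\begin{itemize}
\item Move to the other endpoint of $f_{j_1}$. If that slot is labelled by $m$, stop and call its variable $v$; then $uv$ is an edge.
\item Otherwise the slot belongs to some $e_{i_1}$. Cross $e_{i_1}$ to its other endpoint.
\item If that $e$-slot is the label of some $f$-slot, continue through the corresponding $f_{j_2}$ and repeat.
\end{itemize}
The walk alternates between $f$-edges and $e$-edges, so it has the shape required in Definition \ref{even-connectedness}.

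The key step, and the main obstacle, is showing that this process terminates at an $m$-slot and uses each occurrence $e_i$ at most once. We plan to handle it by treating the slot labelling as a partial matching between $f$-slots and $e$-slots and following it injectively. Entering an $e$-occurrence consumes both of its slots, so no occurrence is entered twice and condition (3) holds. Finiteness then forces the walk to stop. There is one possible dead end: the far endpoint of an $e$-occurrence is not the label of any $f$-slot (it was absorbed into $g$). We avoid this by choosing the labelling and the factorization so that $g$ absorbs $m$-factors whenever possible. Equivalently, among all labellings we take one maximizing the number of $e$-slots matched to $f$-slots. Then an $e$-slot is unmatched only when swapping cannot help, and a short exchange argument should reduce this case to a shorter walk or give a contradiction. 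Since every $e$-slot the walk meets is matched, the only place the walk can stop is an $m$-labelled slot $v$. Because it is a distinct slot from $u$, we get $uv\mid m$ (with $u=v$ allowed, giving $u^2$).

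By the first inclusion, $uv$ lies in the colon. Hence $(I^{t+1}:E)$ is generated by such quadrics, each either an edge or an even-connected pair.
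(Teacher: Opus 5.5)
Most of your argument is correct. The identity $uv\prod_{\ell=1}^{k-1}x_{2\ell}x_{2\ell+1}=\prod_{\ell=1}^{k}x_{2\ell-1}x_{2\ell}$, together with the leftover $t-k+1$ occurrences, gives the easy inclusion. The count $2t+2>2t$ does give two $m$-labelled $f$-slots. A walk that actually reaches an $m$-slot is a valid even-connection: every slot carries at most one label, so no occurrence $e_i$ is crossed twice.

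The gap is the step you flag yourself. You start at an \emph{arbitrary} $m$-labelled slot and assert that, after maximizing, ``every $e$-slot the walk meets is matched.'' The exchange argument that would justify this is left as something that ``should'' work.

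If the maximization is only over labellings of a fixed factorization, the assertion is false. Let $\Gamma$ have edges $ua,ab,vw$, and take $t=1$, $e_1=ab$, $m=uvw$ and $me_1=(ua)(vw)\cdot b$. All variables are distinct, so the labelling is forced. The walk from $u$ runs $u\to a\to b$, and the slot of $e_1$ carrying $b$ labels only a slot of $g$, so it is a dead end. If the maximization also ranges over factorizations, the assertion is true, but the exchange proving it is exactly the missing content.

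Either of the following closes the gap.

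(a) Suppose the walk from an $m$-slot with variable $y_0$ uses $f$-factors $f^j=y_{2j-2}y_{2j-1}$ and occurrences $e^j=y_{2j-1}y_{2j}$ for $j\in[r]$. Suppose it dies because the slot of $e^r$ carrying $y_{2r}$ labels no $f$-slot, so $y_{2r}\mid g$. Then $y_{2r}\prod_j f^j=y_0\prod_j e^j$. Replacing $f^1,\dots,f^r$ by $e^1,\dots,e^r$ and $g$ by $gy_0/y_{2r}$ gives a factorization of $mE$ with one more $e$-slot labelling an $f$-slot. Hence, at a maximum over factorizations and labellings, there are no dead ends.

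(b) More simply, choose the starting slot instead of maximizing. Form the graph on all slots whose edges join the two slots of each $f_j$, the two slots of each $e_i$, and each $f$-slot to its label. Every vertex has degree at most $2$, so the components are paths and cycles. The degree-one vertices are exactly the labelled $m$-slots and the unlabelled $e$-slots. Counting the $2t+2$ $f$-slots shows the former outnumber the latter by exactly $2$. Since each path has two ends, some path component joins two distinct $m$-slots. Reading the variables along it gives your alternating walk from $u$ to $v$ with $uv\mid m$.

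With either repair your argument becomes a complete, self-contained proof. The paper itself does not prove this statement; it quotes it from \cite{B}.
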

	
	For a monomial ideal $I$,  let $\mathcal{G}(I)$ be  the unique minimal set of its monomial generators.
	
	\begin{Lemma}\label{t-1edges}
		Let $t\ge 2$ be an integer and let $\Gamma$ be a simple graph.  Let $f\in R$ be a monomial such that $(I(\Gamma)^t:f)=\frak{p}$
		for some $\frak{p}\in \operatorname{Ass}(I(\Gamma))$.
		Then
		\[
		f\in I(\Gamma)^{t-1}.
		\]
	\end{Lemma}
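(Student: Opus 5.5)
Since $(I^t:f)=\mathfrak{p}$ is a prime generated by variables and $f$ is a monomial, $f\notin I^t$. The plan is to show that $f$ cannot avoid $I^{t-1}$. Let $s$ be the largest integer with $f\in I^s$, so $s\le t-1$, and suppose for contradiction that $s\le t-2$. Write $f=e_1\cdots e_s\cdot g$ with $e_i$ edges of $\Gamma$ and $g$ a monomial not divisible by any edge of $\Gamma$ that would raise the power. More precisely, choose the decomposition so that $s$ is maximal; then $g\notin I$ in the relevant sense.

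Consider any minimal generator $x_i\in\mathfrak{p}$. Since $x_i f\in I^t$ and $s\le t-2$, multiplying by a single variable must raise the edge-power of $f$ by at least $2$. Because $I$ is generated in degree $2$, we get $\deg(x_i f)\ge 2t$, and hence $\deg f\ge 2t-1$.

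The key point is that one extra variable can raise the maximal number of disjoint edges (with multiplicity) dividing a monomial by at most one. Indeed, if $x_i f = h_1\cdots h_t\cdot u$ with the $h_j$ edges, then at most one $h_j$ involves the new copy of $x_i$. Deleting that $h_j$ leaves a factorization of a divisor of $f$ lying in $I^{t-1}$. In detail, let $j$ be an index with $x_i\mid h_j$ whose factor uses the added $x_i$; if no $h_j$ uses it, $x_i\mid u$. Either way, $f$ is divisible by the product of the other $t-1$ edges, so $f\in I^{t-1}$, contradicting $s\le t-2$.

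So the proof is short. Since $\mathfrak{p}\neq 0$ (associated primes of a nonzero edge ideal are nonzero), pick a variable $x_i\in\mathfrak{p}\subseteq (I^t:f)$. Then $x_i f\in I^t$, so $x_i f$ is divisible by a product $h_1\cdots h_t$ of $t$ edges. If $x_i$ divides the cofactor $x_i f/(h_1\cdots h_t)$, then $h_1\cdots h_t\mid f$, giving $f\in I^t$, which contradicts $(I^t:f)\neq R$. Otherwise, by comparing exponents of $x_i$, some $h_j$ contains $x_i$ and $\prod_{l\ne j}h_l$ divides $f$. Thus $f\in I^{t-1}$.

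The only mild obstacle is handling exponents carefully with repeated edges. This is settled by comparing $\deg_{x_i}$ on both sides: $\deg_{x_i}(\prod_{l\ne j}h_l)\le \deg_{x_i}(x_if)-1=\deg_{x_i}f$, and the exponents of all other variables are unchanged. The hypothesis $t\ge2$ only ensures that the statement is meaningful.
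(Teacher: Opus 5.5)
Your argument is correct and is essentially the paper's proof: pick $x_i\in\mathfrak{p}$ and write $x_if=h_1\cdots h_t u$. Then either $x_i\mid u$, so $f\in I^t\subseteq I^{t-1}$, or $x_i$ divides some $h_j$ and $f=\bigl(\prod_{l\neq j}h_l\bigr)(h_j/x_i)\,u\in I^{t-1}$. The opening setup with the maximal $s$ and the degree bound $\deg f\ge 2t-1$ plays no role and can be dropped.
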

		\begin{proof}
		Let  $x\in \frak{p}$, then $xf \in I(\Gamma)^t$.  Thus we  can write $xf$ as $xf=u_{1} \cdots u_{t}h$, where each $u_i \in \mathcal{G}(I(\Gamma))$ and
		 $h$ is a monomial.
		If $x\mid h$, then $f=u_1\cdots u_t(h/x)\in I(\Gamma)^{t}$. Otherwise, $x\mid u_{\ell}$ for some  $\ell\in[t]$. Thus $f=u_1\cdots u_{\ell-1}(u_{\ell}/x)u_{\ell+1}\cdots u_t h\in I(\Gamma)^{t-1}$.
	\end{proof}

		For a monomial $f=\prod\limits_{i\in[n]}x_i^{s_i}$,  its  \emph{support}  is defined by $\supp(f) = \{x_i\mid i \in [n] \text{ and } s_i\ge 1\}$. For a monomial ideal $I$, we set $\supp(I) = \bigcup\limits_{u\in \mathcal{G}(I)}\supp(u)$.
	\begin{Lemma}\label{NG}
		Let $\Gamma$ be a simple graph, and let $f\in R$ be a monomial such that  $(I(\Gamma):f)=\frak{p}$
		for some $\frak{p}\in \operatorname{Ass}(I(\Gamma))$. Then $\mathfrak{p}=\left(x_i\mid x_i\in N_\Gamma(\supp(f))\right)$.
	\end{Lemma}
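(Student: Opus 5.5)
The statement to prove is Lemma \ref{NG}: if $(I(\Gamma):f)=\mathfrak{p}$ with $\mathfrak{p}\in\Ass(I(\Gamma))$, then $\mathfrak{p}$ is generated by the variables in $N_\Gamma(\supp(f))$. I will prove the two inclusions directly, using only that $I(\Gamma)$ is squarefree and generated by the edges $x_ix_j$.

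The first step is to note that $f\notin I(\Gamma)$, since otherwise $(I(\Gamma):f)=R\neq\mathfrak{p}$. Because $I(\Gamma)$ is squarefree, $f\in I(\Gamma)$ if and only if its support contains an edge. Hence $\supp(f)$ is an independent set of $\Gamma$. It also lets me assume without loss of generality that $f$ is squarefree, since replacing $f$ by $\prod_{x_i\in\supp(f)}x_i$ changes neither the colon ideal nor the support.

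For the inclusion $\supseteq$, take $x_j\in N_\Gamma(\supp(f))$. Then there is some $x_i\in\supp(f)$ with $x_ix_j\in E(\Gamma)$. It follows that $x_jf$ is divisible by $x_ix_j\in I(\Gamma)$, so $x_j\in(I(\Gamma):f)=\mathfrak{p}$.

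For the inclusion $\subseteq$, recall that $\mathfrak{p}$ is an associated prime of a squarefree monomial ideal, so it is generated by variables. Take a generator $x_j\in\mathfrak{p}$. Then $x_jf\in I(\Gamma)$, so some edge $x_ux_v$ divides $x_jf$. Since $\supp(f)$ is independent, this edge cannot lie entirely in $\supp(f)$, so one endpoint must be $x_j$.
\begin{itemize}
\item If $x_j\notin\supp(f)$, the other endpoint lies in $\supp(f)$, so $x_j\in N_\Gamma(\supp(f))$.
\item If $x_j\in\supp(f)$, then $x_j^2$ would have to contain an edge, which is impossible for a simple graph unless the other endpoint lies in $\supp(f)$. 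That would again give an edge inside $\supp(f)$, a contradiction. So this case does not occur; equivalently, since $f$ is squarefree, $x_jf\in I(\Gamma)$ iff $\supp(f)\cup\{x_j\}$ contains an edge, which forces $x_j\notin\supp(f)$ and $x_j$ adjacent to $\supp(f)$.
\end{itemize}

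Combining the two inclusions gives $\mathfrak{p}=(x_i\mid x_i\in N_\Gamma(\supp(f)))$. The only point needing care is the reduction to squarefree $f$ together with the independence of $\supp(f)$; neither is a real obstacle.
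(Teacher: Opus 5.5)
Your proof is correct and is essentially the paper's argument. You use the same direct check for $\supseteq$, and for $\subseteq$ the same observation that an edge dividing $x_jf$ but not $f$ must have $x_j$ as one endpoint and its other endpoint in $\supp(f)$. Your reduction to squarefree $f$ is valid, because $(I(\Gamma):f)=(I(\Gamma):\prod_{x_i\in\supp(f)}x_i)$ for a squarefree ideal, but it is unnecessary: the paper simply cancels $x_u$ from $x_ux_v\mid x_uf$ to get $x_v\mid f$.
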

	\begin{proof}
		First, we show that $(x_i\mid x_i\in N_{\Gamma}(\supp(f))) \subseteq \mathfrak{p}$. Let $x_i \in N_{\Gamma}(\supp(f))$, then there exists a $x_j \in \supp(f)$ such that $x_ix_j \in E(\Gamma)$.  We can write $f$ as $f = x_j g$ for some monomial $g$. Thus  $x_i f = x_i (x_j g) = (x_i x_j) g\in I(\Gamma)$. Therefore,   $x_i \in (I(\Gamma) : f) = \mathfrak{p}$, it follows that  $(x_i\mid x_i\in N_{\Gamma}(\supp(f))) \subseteq \mathfrak{p}$.
		
	Next, we show that $\mathfrak{p} \subseteq (x_i\mid x_i\in N_{\Gamma}(\supp(f)))$. Let $\mathfrak{p}=(x_{i_1},x_{i_2},\ldots,x_{i_s})$, where $s\ge 1$.   For any $j\in [s]$,  we have $x_{i_j}\in (I(\Gamma):f)$,  which implies that $x_{i_j}f\in I(\Gamma)$. Thus
	$x_u x_v\mid x_{i_j} f$ for some  $x_ux_v \in E(\Gamma)$. Note that  $f\notin I(\Gamma)$,   $x_ux_v\nmid f$. Thus, $x_u=x_{i_j}$ or $x_v=x_{i_j}$.
 Without loss of generality, we can assume $x_u=x_{i_j}$. Thus  $x_v\mid f$,  so $x_v \in \supp(f)$. This implies that  $x_{i_j} \in N_{\Gamma}(x_v)\subseteq N_{\Gamma}(\supp(f))$, since  $x_{i_j}x_v =x_ux_v \in E(\Gamma)$. Therefore, $\mathfrak{p} \subseteq (x_i\mid x_i\in N_{\Gamma}(\supp(f)))$.
	\end{proof}
	
	\begin{Lemma}\label{lowbound1}
		Let   $t\ge 2$ be an integer  and  let 	$\Gamma=C_{2n}(a,n)$ be a cubic circulant graph with  $a\in\{1,2\}$.
 Suppose that for any edges
		$e_1,\ldots,e_{t-1}\in E(\Gamma)$, the colon ideal $(I(\Gamma)^t:\prod\limits_{i=1}^{t-1}e_i)$ can be regarded as the edge ideal of a simple graph.
		Then
		\[
		\mathrm{v}(I(\Gamma)^t)
		\ge 2t-2+
		\left\lceil\frac{\tau(\Gamma)-2t+2}{3}\right\rceil.
		\]
	\end{Lemma}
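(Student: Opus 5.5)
Fix a monomial $f$ and a prime $\mathfrak{p}\in\operatorname{Ass}(I(\Gamma)^t)$ with $(I(\Gamma)^t:f)=\mathfrak{p}$ and $\deg f=\mathrm{v}(I(\Gamma)^t)$; since $I(\Gamma)^t$ is monomial we may take $f$ monomial. By Lemma \ref{t-1edges}, $f\in I(\Gamma)^{t-1}$, so $f=e_1\cdots e_{t-1}\,g$ for some edges $e_1,\dots,e_{t-1}\in E(\Gamma)$ and a monomial $g$. Set $J=(I(\Gamma)^t:\prod_{i=1}^{t-1}e_i)$. By hypothesis $J=I(\Gamma')$ for a simple graph $\Gamma'$. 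From
\[
(I(\Gamma)^t:f)=(J:g)=\mathfrak{p}
\]
we get $\deg f=2t-2+\deg g$. It therefore suffices to show that
\[
\deg g\ge \left\lceil\frac{\tau(\Gamma)-2t+2}{3}\right\rceil .
\]

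Since $(J:g)=\mathfrak{p}$ is prime and $J=I(\Gamma')$ is squarefree, $\mathfrak{p}$ is a minimal prime of $J$: the prime $(J:g)$ is an associated prime of the radical ideal $J$, hence minimal. So $\mathfrak{p}$ is generated by a minimal vertex cover $C'$ of $\Gamma'$. Moreover $I(\Gamma)\subseteq J$, so $\Gamma\subseteq\Gamma'$ on the same vertex set (possibly plus loops, which we handle by the same counting). Hence $C'$ is also a vertex cover of $\Gamma$, and $|C'|\ge\tau(\Gamma)$. By Lemma \ref{NG} applied to $\Gamma'$, we have $C'=N_{\Gamma'}(\supp g)$, so
\[
\tau(\Gamma)\le |N_{\Gamma'}(\supp g)|\le \sum_{x\in\supp g}\deg_{\Gamma'}(x).
\]

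The key step is to bound how much $N_{\Gamma'}(\supp g)$ can exceed $N_\Gamma(\supp g)$, whose size is at most $3|\supp g|\le 3\deg g$ because $\Gamma$ is cubic. The new edges of $\Gamma'$ come from Theorem \ref{edgecolon}: every such edge $uv$ is even-connected with respect to $\prod e_i$, so both $u$ and $v$ are neighbors in $\Gamma$ of endpoints of the $e_i$. The plan is to show that the extra vertices of $C'$ not in $N_\Gamma(\supp g)$ number at most $2(t-1)$, so that
\[
\tau(\Gamma)\le 3\deg g+2t-2 .
\]
The natural argument is a charging scheme. Let $U$ be the set of endpoints of $e_1,\dots,e_{t-1}$, so $|U|\le 2(t-1)$. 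Every vertex $w\in C'\setminus N_\Gamma(\supp g)$ is covered only through an even-connection edge $wv$ with $v\in\supp g$. We then want to replace $C'\setminus N_\Gamma(\supp g)$ by vertices of $U$ while still covering $\Gamma$: take $N_\Gamma(\supp g)\cup U$. Since $U$ is a vertex set of size at most $2t-2$, it suffices to check that $N_\Gamma(\supp g)\cup U$ is a vertex cover of $\Gamma$, and this yields the bound directly.

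To prove that covering claim, take an edge $xy$ of $\Gamma$ with $x,y\notin N_\Gamma(\supp g)\cup U$. Since $C'$ covers $xy$, say $x\in C'$. Then $x$ is adjacent in $\Gamma'$ to some $v\in\supp g$ via an even-connection, so $x$ is adjacent in $\Gamma$ to an endpoint of some $e_i$. The main obstacle is deriving a contradiction here, perhaps by showing that $xy$ together with the walk forces $y$ into $C'$ in a way incompatible with minimality, or forces $g\in J$. If this direct route fails, I would instead exploit the specific structure of $C_{2n}(1,n)$ and $C_{2n}(2,n)$, and the fact that $\Gamma$ is $3$-regular, to bound the degree in $\Gamma'$ of each vertex of $\supp g$ by $3$ plus the number of edges $e_i$ it even-connects through. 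Summing then gives $\tau(\Gamma)\le 3\deg g+2(t-1)$. Rearranging and taking the ceiling completes the proof.
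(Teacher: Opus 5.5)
\textbf{There is a genuine gap: the key bound on the extra cover vertices is never proved, and the route you propose for it is false.}

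Your reduction is sound and matches the paper. You write $f=e_1\cdots e_{t-1}g$ with $g$ squarefree, observe that $\mathfrak p=(J:g)$ is generated by the minimal vertex cover $C'=N_{\Gamma'}(\supp g)$ of $\Gamma'\supseteq\Gamma$, and so everything reduces to $|C'\setminus N_\Gamma(\supp g)|\le 2(t-1)$. That last bound is exactly the step you leave open.

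Your proposed route cannot work: $N_\Gamma(\supp g)\cup U$ need not be a vertex cover of $\Gamma$, even when $f$ realizes the $\mathrm v$-number. Take $\Gamma=C_{14}(1,7)$, $I=I(\Gamma)$, $t=2$, $f=x_1x_2x_3x_5$, $e_1=x_2x_3$, $g=x_1x_5$.
\begin{itemize}
\item Then $(I^2:f)=(x_2,x_4,\dots,x_{14})$; this is the witness from Theorem \ref{main1(1)}, of degree $4=\mathrm v(I^2)$.
\item The extra vertex $x_{10}\in C'$ comes from the even-connection $x_{10},x_3,x_2,x_1$.
\item However, $N_\Gamma(\{x_1,x_5\})\cup\{x_2,x_3\}=\{x_2,x_3,x_4,x_6,x_8,x_{12},x_{14}\}$ misses the edge $x_9x_{10}$.
\end{itemize}
So no contradiction can be extracted from an uncovered edge $xy$: nothing forces $y$ into $C'$.

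Your fallback is not an argument either. The per-vertex bound ``$3$ plus the number of edges it even-connects through'' is false. In the example, $x_1$ has $\Gamma'$-degree $5$: it gains both $x_4$ and $x_{10}$ through the single edge $x_2x_3$. Moreover, summing $\Gamma'$-degrees over $\supp g$ double counts whenever several vertices of $\supp g$ sit next to the same edge $e_i$. So that sum cannot yield a bound of the form $3\deg g+2(t-1)$.

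The missing idea is to charge the extra vertices to the distinct edges $d_1,\dots,d_s$ (with $s\le t-1$), not to vertices of $\supp g$ or to endpoints.
\begin{enumerate}
\item For each extra vertex $u$, fix a shortest even-connection to $\supp g$. Its first bridging edge $d_j=a_jb_j$ puts $u$ in $N_\Gamma(a_j)\setminus\{b_j\}$ or in $N_\Gamma(b_j)\setminus\{a_j\}$. By cubicity each of these sets has at most $2$ elements.
\item Step 1 alone gives only $4(t-1)$. The crucial point is that for each $j$ at most one of the two sides can occur.
\item Suppose both sides occurred, via $u_1$ ending at $v_{u_1}$ and $u_2$ ending at $v_{u_2}$. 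Truncating the two walks after $d_j$ gives $v_{u_1}b_j,\ v_{u_2}a_j\in(I^s:f_j)$, where $f_j=\prod_{i\ne j}d_i$.
\item The identity $(I^s:f_ja_j)\cap(I^s:f_jb_j)=J$ (the paper uses \cite[Lemma 3.7]{HPV}) then yields $v_{u_1}v_{u_2}\in J$, or $v_{u_1}\in J$ if the two endpoints coincide. Either way $g\in J$, contradicting $(J:g)\neq R$.
\end{enumerate}
With this one-sidedness claim the extra vertices number at most $2s\le 2(t-1)$, which is precisely the inequality your argument needs.
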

	\begin{proof}
		Let $I=I(\Gamma)$ and $(I^t:f)=\mathfrak{p}$ for some $\mathfrak{p}\in\Ass(I^t)$  and a
		monomial $f\in R$ such that $\deg(f)=\mathrm{v}(I^t)$.  By Lemma \ref{t-1edges},	$f\in I^{t-1}$.
		Let $f=g\prod\limits_{i=1}^{t-1}e_i$, where $e_1,e_2,\ldots,e_{t-1}\in E(\Gamma)$ and  $g$ is a   monomial.
		Since
		\[
		(I^t:f)=(I^t:g\prod\limits_{i=1}^{t-1}e_i)=((I^t:\prod\limits_{i=1}^{t-1}e_i):g)=(I(H):g)=\mathfrak{p},
		\]
		where $H$ is a simple graph such that $I(H)=(I^t:\prod\limits_{i=1}^{t-1}e_i)$. Then  $g$ is  squarefree and $\mathfrak{p}\in \Ass(I(H))$.
		Therefore,  $\supp(\mathfrak{p})$ is a minimal vertex cover of $H$.  It follows that   $|\supp(\mathfrak{p})|\ge \tau(H)$.
		On the other hand, by Lemma \ref{edgecolon}, each generator $x_ix_j$ of $(I(\Gamma)^{t} : \prod\limits_{i=1}^{t-1}e_i)$ is either an edge of $\Gamma$ or even-connected with respect to $\prod\limits_{i=1}^{t-1}e_i$.
  By Definition
		\ref{even-connectedness} of even-connected,  we know that $V(\Gamma)=V(H)$ and $E(\Gamma)\subseteq E(H)$. Thus  $|\supp(\mathfrak{p})|\ge \tau(H)\ge \tau(\Gamma)$.
		
		\smallskip
	Since $\deg(f)=\deg(g\prod\limits_{i=1}^{t-1}e_i)$ and $\deg(\prod\limits_{i=1}^{t-1}e_i)=2(t-1)$,	to prove that $\deg(f)\ge 2t-2+	\left\lceil\frac{\tau(\Gamma)-2t+2}{3}\right\rceil$, it suffices to show that
		\[
		3\deg(g)+2(t-1)\ge |\supp(\mathfrak{p})|.
		\]
		By Lemma \ref{NG},   $\mathfrak{p}=(x_i\mid x_i\in N_H(\supp(g)))$, so  $|\supp(\mathfrak{p})|=|N_H(\supp(g))|$.
		Note that
		\[
		N_H(\supp(g))=N_{\Gamma}(\supp(g))\sqcup \left(N_H(\supp(g))\setminus N_{\Gamma}(\supp(g))\right),
		\]
		we have
		\[
				|N_H(\supp(g))|=|N_{\Gamma}(\supp(g))|+| N_H(\supp(g))\setminus N_{\Gamma}(\supp(g))|.
		\]
		Therefore, if we prove
		 that
		\begin{align*}
3\deg(g)\ge |N_{\Gamma}(\supp(g))|
\tag{1}\label{eq:myequation1}
			\end{align*}
 and
 \begin{align*}
 2(t-1)\ge |N_H(\supp(g))\setminus N_{\Gamma}(\supp(g))|,
 \tag{2}\label{eq:myequation2}
			\end{align*}
  then the conclusion follows.
		
	Note that $\Gamma$ is $3$-regular,  we have  $|N_{\Gamma}(x_i)|=3$ for any $i\in[2n]$. Since  $g$ is  squareefree,
	it follows that  $3\deg(g)=3|\supp(g)|\ge  |N_{\Gamma}(\supp(g))|$. That is, (\ref{eq:myequation1}) holds.

	Next, we will prove the  relationship (\ref{eq:myequation2}).
		
	Let $d_1,\ldots,d_s$ be the distinct edges appearing in $e_1,\ldots,e_{t-1}$ and let  each $d_i=a_i b_i$. Then $s\le t-1$. It follows from
		\cite[Lemma 3.5]{HPV} that
		\begin{align*}
		I(H)=(I^t:\prod\limits_{i=1}^{t-1}e_i)=(I^{s+1}:\prod\limits_{i=1}^{s}d_i).
		\tag{3}\label{eq:myequation3}
			\end{align*}
	
				For each $u\in N_H(\supp(g))\setminus N_{\Gamma}(\supp(g))$, there exists a $v_u \in \supp(g)$ such that $uv_u \in E(H) \setminus E(\Gamma)$. Thus, by (\ref{eq:myequation3}) and Lemma \ref{edgecolon}, we can deduce that $u$ and $v_u$ are even-connected path with respect to $\prod\limits_{i=1}^{s}d_i$ in $\Gamma$.
				Suppose that this path is $\pi_u:  u = x_{u_1}, x_{u_2}, \dots, x_{u_{2k}} = v_u$ satisfying $k$ as small as possible.
				Since $uv_u\notin E(\Gamma)$ and $k$ is minimus, we have $k\ge2$ and $u\neq x_{u_3}$, which implies $u\in N_{\Gamma}(x_{u_2})\setminus\{x_{u_3}\}$.
				
				By the arbitrariness of $u$, for each $u\in N_H(\supp(g))\setminus N_{\Gamma}(\supp(g))$, we fix an even-connected path $\pi_u:  u = x_{u_1}, x_{u_2}, \dots, x_{u_{2k}} = v_u$ with respect to $\prod\limits_{i=1}^s d_i$ satisfying $k$ as small as possible.  Since $x_{u_2}x_{u_3}=a_{j_u}b_{j_u}$ for some ${j_u}\in [s]$, we have either $x_{u_2}=a_{j_u}$ and  $x_{u_3}=b_{j_u}$, or $x_{u_2}=b_{j_u}$ and $x_{u_3}=a_{j_u}$.  Furthermore,
			$u\in N_{\Gamma}(a_{j_u})\setminus\{b_{j_u}\}$ or 	$u\in N_{\Gamma}(b_{j_u})\setminus\{a_{j_u}\}$.
				
					Next,  for each $j$ in $[s]$,  we define two sets as follows:
				\[
				\begin{aligned}
					A_j
					&=
					\left\{
					u\in N_H(\supp(g))\setminus N_{\Gamma}(\supp(g))
					\;\middle|\;
					\begin{aligned}
						&\text{the even-connection }\pi_u\text{ satisfies}\\[-1mm]
						&x_{u_2}=a_j \text{ and } x_{u_3}=b_j
					\end{aligned}
					\right\},\\[2mm]
					B_j
					&=
					\left\{
					u\in N_H(\supp(g))\setminus N_{\Gamma}(\supp(g))
					\;\middle|\;
					\begin{aligned}
						&\text{the even-connection }\pi_u\text{ satisfies}\\[-1mm]
						&x_{u_2}=b_j \text{ and } x_{u_3}=a_j
					\end{aligned}
					\right\}.
				\end{aligned}
				\]
				As mentioned earlier, $u\in N_{\Gamma}(a_{j_u})\setminus\{b_{j_u}\}$ or	$u\in N_{\Gamma}(b_{j_u})\setminus\{a_{j_u}\}$.	 Thus $A_j\subseteq N_{\Gamma}(a_j)\setminus\{b_j\}$, $B_j\subseteq N_{\Gamma}(b_j)\setminus\{a_j\}$ and $\bigcup\limits_{j\in[s]}(A_j\cup B_j)=N_H(\supp(g))\setminus N_{\Gamma}(\supp(g))$. Furthermore, $|A_j|\le 2$ and $|B_j|\le 2$, since $\Gamma$ is $3$-regular.
				
				Claim: for each $j\in[s]$,  at least one of $A_j$ and $B_j$ is empty.

				Suppose on contradiction that $A_j\neq \emptyset$ and $B_j\neq \emptyset$ for some $j\in[s]$. Then there exist vertices $u_1\in A_j$,  $u_2 \in B_j$ and two  even-connected paths
				\[
				\pi_{u_1}:u_1 \to a_j \to b_j \to \cdots \to v_{u_1},
				\]
				\[
				\pi_{u_2}: u_2 \to b_j \to a_j \to \cdots \to v_{u_2},
				\]
				such that $v_{u_1}, v_{u_2} \in \supp(g)$.  Thus, $b_jv_{u_1}\in E(\Gamma)$, or $b_jv_{u_1}$ is even-connected with respect to $d_1\cdots d_{j-1}\widehat{d_j}d_{j+1}\cdots d_s$, where $\widehat{d_j}$ denotes the element $d_j$ omitted from  $d_1d_2\cdots d_s$. By Lemma \ref{edgecolon}, $v_{u_1}b_j \in (I(\Gamma)^{s}:\prod\limits_{i\in[s]\setminus\{j\}}d_i)$. Similarly, $v_{u_2}a_j \in (I(\Gamma)^{s}:\prod\limits_{i\in[s]\setminus\{j\}}d_i)$.  This implies $v_{u_1} \in (I(\Gamma)^{s}:b_j\prod\limits_{i\in[s]\setminus\{j\}}d_i)$ and  $v_{u_2} \in (I(\Gamma)^{s}: a_j\prod\limits_{i\in[s]\setminus\{j\}}d_i)$.

			\medskip
				If $v_{u_1} = v_{u_2}$, then $v_{u_1} \in (I(\Gamma)^{s}:b_jf_j) \cap (I(\Gamma)^{s}:a_jf_j)$, where  $f_j=\prod\limits_{i\in[s]\setminus\{j\}}d_i$.  Note that
				\begin{align*}
					(I(\Gamma)^{s}:f_ja_j) \cap (I(\Gamma)^{s}:f_jb_j)=I(H).
					\tag{4}\label{eq:myequation4}
				\end{align*}
		by \cite[Lemma 3.7]{HPV}.
			 Thus $v_{u_1} \in I(H)$, a contradiction. Therefore, $v_{u_1}\neq v_{u_2}$.
				By the equality (\ref{eq:myequation4}), we can deduce that $v_{u_1} v_{u_2} \in I(H)$.  Furthermore, $v_{u_1} v_{u_2}\mid g$,
				since $v_{u_1} ,v_{u_2} \in \supp(g)$. Therefore, $g\in I(H)$, which contradicts the fact that $g\notin I(H)$. Therefore, either $A_j=\emptyset$ or $B_j=\emptyset$.
				
				As mentioned earlier, for any $j\in[s]$, we have $|A_j|\le 2$, $|B_j|\le 2$, and  $A_j=\emptyset$ or $B_j=\emptyset$. Therefore,
				\begin{align*}
					|N_H(\supp(g))\setminus N_{\Gamma}(\supp(g))|=|\bigcup\limits_{j\in[s]}(A_j\cup B_j)|\le \sum\limits_{j\in[s]}(|A_j|+|B_j|)
					\le 2s\le 2(t-1).
				\end{align*}
				We complete the proof.
	\end{proof}

\section{The $\mathrm{v}$-number of the edge ideal of $C_{2n}(a,n)$}
\label{sec: C{2n}(a,n)}

In this section, we determine the exact formula for the $\mathrm{v}$-number of the edge ideal of $C_{2n}(a,n)$, where $1\le a<n$. By Lemmas \ref{decomposition} and \ref{sum}, it suffices to consider the case $a=1$ and the case where $a=2$ and $n\ge 3$ is odd. For the sake of convenience, we  stipulate that for any positive integer $j$, if $j\equiv i\  (\text{mod\ }2n)$ with $i\in[2n]$, then  $x_j=x_i$.

 \subsection{The case of $C_{2n}(1,n)$}
We  consider the case $a=1$.  Suppose  the vertex set  and  the  edge set  of  $C_{2n}(1,n)$ are  $V(C_{2n}(1,n))=\{x_i\mid i\in[2n]\}$ and
\[
E(C_{2n}(1,n))=\{x_ix_{i+1}\mid i\in[2n]\}\cup\{x_{j}x_{j+n}\mid j\in[n]\},
\]
respectively.

\begin{Lemma}\label{1vertexcovernumber}
	Let $\Gamma=C_{2n}(1, n)$ be a cubic circulant graph, where $n\ge 2$. Then
	\[
	\tau(\Gamma) =
	\begin{cases}
		n,   &\text{if $n$ is odd,}\\
		n+1, &\text{if $n$ is even.}
	\end{cases}
	\]
\end{Lemma}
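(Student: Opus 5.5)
We compute $\alpha(\Gamma)$ and use $\tau(\Gamma)=2n-\alpha(\Gamma)$, since complements of independent sets are exactly the vertex covers. The claim therefore becomes $\alpha(\Gamma)=n$ for $n$ odd and $\alpha(\Gamma)=n-1$ for $n$ even. We prove this by giving an explicit independent set and a matching upper bound.

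\emph{Upper bound.} The vertices split into the $n$ disjoint pairs $\{x_j,x_{j+n}\}$ for $j\in[n]$. Each pair is an edge, so an independent set meets each pair at most once, and $\alpha(\Gamma)\le n$. Now let $n$ be even and suppose $S$ is independent with $|S|=n$. Then $S$ contains exactly one vertex of each pair $\{x_j,x_{j+n}\}$. Set $\epsilon_j=0$ if $x_j\in S$ and $\epsilon_j=1$ if $x_{j+n}\in S$.
\begin{itemize}
\item For $1\le j<n$, the edges $x_jx_{j+1}$ and $x_{j+n}x_{j+n+1}$ force $\epsilon_{j+1}\ne\epsilon_j$. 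Otherwise $S$ would contain both endpoints of one of these edges.
\item The wrap-around edges are $x_nx_{n+1}$ and $x_{2n}x_1$. They join $x_n$ with $x_{n+1}$, which lies in the pair with index $1$, and they join $x_{2n}$ with $x_1$. Hence $\epsilon_n=\epsilon_1$ is forced.
\end{itemize}
The first item makes $\epsilon$ alternate along $1,\dots,n$, so $\epsilon_n\ne\epsilon_1$ when $n$ is even. This contradicts the second item, so $\alpha(\Gamma)\le n-1$ for $n$ even. The main step is this parity argument, and the only delicate point is checking the wrap-around edges exactly as above.

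\emph{Constructions.} For $n$ odd, take $S=\{x_i\mid i\in[2n],\ i \text{ odd}\}$, which has size $n$.
\begin{itemize}
\item Consecutive indices have opposite parity. The pair $x_{2n},x_1$ contains the even index $2n$.
\item For $j\in[n]$, the indices $j$ and $j+n$ have opposite parity because $n$ is odd.
\end{itemize}
So $S$ is independent and $\alpha(\Gamma)=n$.

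For $n$ even, take $S=\{x_1,x_3,\dots,x_{n-1}\}\cup\{x_{n+2},x_{n+4},\dots,x_{2n-2}\}$. This has $n/2+(n/2-1)=n-1$ elements.
\begin{itemize}
\item Within each block the indices differ by at least $2$. Between the blocks, $n-1$ and $n+2$ differ by $3$. The closing pair is $x_{2n-2},x_1$, which differ by $3$ cyclically.
\item For the antipodal edges, $x_j\in S$ with $j$ odd and $j<n$ gives $x_{j+n}$ with $j+n$ odd and greater than $n$, which is not in $S$. Conversely, an antipode of an element of $S$ of index greater than $n$ is an even index $j<n$, which is not in $S$.
\end{itemize}
Hence $S$ is independent and $\alpha(\Gamma)=n-1$.

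Combining the bounds with $\tau(\Gamma)=2n-\alpha(\Gamma)$ gives the stated formula.
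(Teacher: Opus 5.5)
Your proof is correct and is essentially the paper's argument written for independent sets rather than vertex covers: the $n$ disjoint antipodal edges give the bound, parity classes settle the odd case, a parity contradiction excludes size $n$ when $n$ is even, and your explicit $(n-1)$-element set is, up to a rotation of indices, the complement of the paper's cover $C_2$. The only variation is the order of the parity step. You first fix one vertex per antipodal pair and then let the cycle and wrap-around edges give the contradiction. The paper instead first notes that a size-$n$ cover of the $2n$-cycle must be a parity class, and then exhibits the uncovered antipodal edge $x_2x_{2+n}$.
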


\begin{proof}
	Let $C$ be a minimal vertex cover of $\Gamma$. Then $C\cap\{x_{j},x_{j+n}\}\neq \emptyset$ for all $j\in[n]$.
	Since the edges $x_1x_{1+n},x_2x_{2+n},\ldots,x_{n}x_{2n}$ are pairwise disjoint,  we have $|C|\ge n$, so $\tau(\Gamma)\ge n$.
	We consider the following two cases:

(1) 	If $n$ is odd, then $C_1  = \{x_{2k-1}\mid k\in [n]\}$  is a vertex cover of $\Gamma$.  Thus,  $\tau(\Gamma) = n$.
	
(2) 	Suppose that $n$ is even.  We first show that $\tau(\Gamma)\ge n+1$. Suppose to the contrary that $\tau(\Gamma) = n$. Then there exists a  minimal vertex cover $C'$ of $\Gamma$ with $|C'|=n$. Since  $C'\cap\{x_i,x_{i+1}\}\neq\emptyset$ for all $i\in[2n]$,
we can deduce that  either $C' = \{x_{2k-1}\mid k\in [n]\}$ or $C' = \{x_{2k}\mid k\in [n]\}$.
	If $C' = \{x_{2k-1}\mid k\in [n]\}$, then $C' \cap \{x_2, x_{2+n}\} = \emptyset$, which contradicts $x_2x_{2+n}\in E(\Gamma)$.
	Therefore, in this case,  we have  $\tau(\Gamma)\ge n+1$.  By symmetry, if $C' = \{x_{2k}\mid k\in [n]\}$, then we can  also obtain
	$\tau(\Gamma)\ge n+1$.
	
Finally,  let  $C_2=\{x_1\}\cup\left\{x_{2i}\mid i\in \left[\frac{n}{2}\right ]\right\}\cup  \left\{x_{n+2i-1}\mid i\in \left[\frac{n}{2}\right]\right\} $, and  it is easy to check that $|C_2|=n+1$ and $C_2$ is a vertex cover of $\Gamma$. Therefore, $\tau(\Gamma)=n+1$.
\end{proof}

\begin{Theorem}\label{1vnumber}
	Let  $n\ge 2$ be an integer.  Let  $\Gamma=C_{2n}(1, n)$ be a cubic circulant graph, and let $I(\Gamma)$ be  its edge ideal. Then
	\[
	\mathrm{v}(I(\Gamma))=
	\begin{cases}
		\lceil\frac{n}{3}\rceil, &\text{if $n$ is odd,}\vspace{1ex}\\
		\lceil\frac{n+1}{3}\rceil, &\text{if $n$ is even.}
	\end{cases}
	\]
\end{Theorem}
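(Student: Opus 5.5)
The plan is to prove matching lower and upper bounds, using Lemma \ref{vedgeideal} to reduce everything to independent sets. Write $\tau=\tau(\Gamma)$, which by Lemma \ref{1vertexcovernumber} equals $n$ for $n$ odd and $n+1$ for $n$ even. The target value is then $\lceil \tau/3\rceil$ in both cases.

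\emph{Lower bound.} Let $A\in\mathcal{A}_\Gamma$ with $|A|=\mathrm{v}(I(\Gamma))$. Then $N_\Gamma(A)$ is a minimal vertex cover of $\Gamma$, so $|N_\Gamma(A)|\ge\tau$. Since $\Gamma$ is $3$-regular, $|N_\Gamma(A)|\le 3|A|$, and therefore $|A|\ge\lceil\tau/3\rceil$. This is the $t=1$ analogue of inequality (\ref{eq:myequation1}), and the same argument via Lemma \ref{NG} applies.

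\emph{Upper bound.} We need an independent set $A$ with $|A|=\lceil\tau/3\rceil$ whose neighbourhood $N_\Gamma(A)$ is a minimal vertex cover. It suffices that $N_\Gamma(A)$ is a vertex cover: $N(A)$ is disjoint from the independent set $A$, and each $x\in N(A)$ has a neighbour in $A$ outside $N(A)$, so no vertex of $N(A)$ can be removed. Thus minimality comes for free, and the goal reduces to making $V(\Gamma)\setminus N(A)$ independent.

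For the construction I would take $A$ among vertices spaced by $3$ along the rim cycle $x_1,\dots,x_{2n}$, alternating between the two halves so that the spokes $x_jx_{j+n}$ are covered. The guiding requirement is that the neighbourhoods nearly tile a cover of size about $\tau$. One candidate is $A=\{x_{6i+1}\}\cup\{x_{n+6i+4}\}$ for suitable ranges of $i$. Here $x_{6i+1}$ covers $x_{6i},x_{6i+2},x_{n+6i+1}$, and $x_{n+6i+4}$ covers $x_{n+6i+3},x_{n+6i+5},x_{6i+4}$. Together these cover positions $\equiv 0,2,4 \pmod 6$ on the rim pattern, plus the spoke partners at odd offsets, mimicking the alternating cover $\{x_{2k-1}\}$ or $C_2$ from Lemma \ref{1vertexcovernumber}. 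I would check that the complement consists of odd-indexed vertices (in a suitable labelling) together with $A$, and that it is independent.

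Near the wrap-around, $n \bmod 6$ and the parity of $n$ force a case analysis. I would handle $n\equiv 0,\dots,5 \pmod 6$ separately, adjusting one or two vertices of $A$ at the seam. For $n$ even the adjustment must absorb the defect of the extra cover vertex, which matches the $n+1$. Small $n$, such as $n=2,3,4$, would be checked directly.

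The main obstacle is this boundary case analysis: making sure the complement of $N(A)$ is independent across the wrap-around in all residues. In particular, the spoke edges $x_jx_{j+n}$ between two uncovered vertices need care when $n$ is not divisible by $6$. If the explicit formula fails at the seam for some residue, I would add one vertex there, using the slack of up to $2$ between $3\lceil\tau/3\rceil$ and $\tau$.
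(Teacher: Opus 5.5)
Your lower bound is the paper's argument. Your remark that $N_\Gamma(A)$ is automatically a \emph{minimal} vertex cover once $A$ is independent and $N_\Gamma(A)$ is a cover is also correct: each $x\in N_\Gamma(A)$ has a neighbour in $A$, and that neighbour lies outside $N_\Gamma(A)$. The gap is the upper bound, which is the only substantive half of the theorem. You never exhibit an independent set of size $\lceil\tau/3\rceil$ for any residue of $n$ modulo $6$: the index ranges in $\{x_{6i+1}\}\cup\{x_{n+6i+4}\}$ are left as ``suitable'', and the seam check is deferred.

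Your fallback of adding one vertex at the seam using the slack also fails in general. When $\tau\equiv 0\pmod 3$, i.e.\ $n\equiv 3\pmod 6$ (where $\tau=n$) or $n\equiv 2\pmod 6$ (where $\tau=n+1$), one has $3\lceil\tau/3\rceil=\tau$. Then $|A|=\tau/3$ together with $\tau\le|N_\Gamma(A)|\le 3|A|$ forces the neighbourhoods of the vertices of $A$ to be pairwise disjoint and to tile a minimum vertex cover. There is no room to add anything, and the pattern must close up exactly. Even in the residues with slack, an added vertex has to be checked to be non-adjacent to $A$ and to cover every remaining edge; this is not automatic. As written, your argument proves only $\mathrm{v}(I(\Gamma))\ge\lceil\tau/3\rceil$.

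Your pattern can be completed. Put $P_r=\{x_{6i+1}\mid 0\le i\le r\}$ and $Q_s=\{x_{n+6i+4}\mid 0\le i\le s\}$, with $Q_{-1}=\emptyset$, and take:
\begin{itemize}
\item $A=P_{k-1}\cup Q_{k-1}\cup\{x_{n-1}\}$ if $n=6k$;
\item $A=P_k\cup Q_{k-1}$ if $n\in\{6k+1,6k+2,6k+3\}$;
\item $A=P_k\cup Q_{k-1}\cup\{x_{n-1}\}$ if $n=6k+4$;
\item $A=P_k\cup Q_k$ if $n=6k+5$.
\end{itemize}
In every case $|A|=\lceil\tau/3\rceil$.

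For $n$ odd, all indices in $A$ are odd. Since $\Gamma$ is bipartite with the even/odd bipartition, $A$ is independent, and one checks that $N_\Gamma(A)$ is the set of all $n$ even-indexed vertices.

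For $n$ even, the indices of $P$ lie in $[1,n-1]$ and are $\equiv 1\pmod 6$. Those of $Q$ lie in $[n+4,2n-2]$, with offsets from $n$ congruent to $4\pmod 6$. Moreover $x_{n-1}$ has no neighbour among them, so $A$ is independent. A direct count gives $N_\Gamma(A)=\{x_{2i},x_{n+2i-1}\mid i\in[n/2]\}\cup\{x_{2n}\}$, a vertex cover of size $n+1$ analogous to $C_2$ in Lemma \ref{1vertexcovernumber}.

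This residue-by-residue verification is exactly what the paper supplies, via its explicit monomials $f_n$ and the computation of $(I(\Gamma):f_n)$.
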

\begin{proof}
	By Lemma \ref{vedgeideal}, there exists an independent set $S$ of $\Gamma$ such that $N_{\Gamma}(S)$ is a minimal vertex cover of $\Gamma$ and $|S|=\mathrm{v}(I(\Gamma))$. Thus  $\tau(\Gamma)\le |N_{\Gamma}(S)|$.    Since  $\Gamma$ is $3$-regular,
	we have $|N_{\Gamma}(S)|\le 3|S|$. Therefore, $\tau(\Gamma)\le  3|S|$. It follows  from Lemma \ref{1vertexcovernumber}
		that
	$\mathrm{v}(I(\Gamma))= |S|\ge \lceil\frac{\tau(\Gamma)}{3}\rceil=\alpha_n$, where  $\alpha_n=
	\begin{cases}
		\lceil\frac{n}{3}\rceil, &\text{if $n$ is odd,}\vspace{1ex}\\
		\lceil\frac{n+1}{3}\rceil, &\text{if $n$ is even.}
	\end{cases}$
	
	In the following, we will prove that  $\mathrm{v}(I(\Gamma))\le \alpha_n$.  Choose $f_2=f_3=x_1$,  $f_4=x_4x_6$, $f_5=	x_1x_{3}$, and
 for any $n\ge 6$,
 	\[
f_n=
\begin{cases}
	x_nx_{n+2}(\prod\limits_{i\in[k-1]} x_{6i+1})(\prod\limits_{j\in[k]} x_{n+6j-2}), & \text{if  $n=6k$}\vspace{1ex},\\
	x_1 x_3 (\prod\limits_{i\in[k]} x_{6i-1}) (\prod\limits_{j\in[k-1]} x_{n+6j+2}), &\text{if $n=6k+1$,}\vspace{1ex}\\
	(\prod\limits_{i\in[k]} x_{6i-3})(\prod\limits_{j\in[k+1]} x_{n+6j-6}), & \text{if  $n=6k+2$,}\vspace{1ex}\\
	\prod\limits_{i\in[2k+1]} x_{6i-5}, &\text{if $n=6k+3$,}\vspace{1ex}\\
	x_n(\prod\limits_{i\in[k]} x_{6i-1}) (\prod\limits_{j\in[k+1]} x_{n+6j-4}), & \text{if  $n=6k+4$,}\vspace{1ex}\\
	x_1 (\prod\limits_{i\in[k+1]} x_{6i-3})(\prod\limits_{j\in[k]} x_{n+6j}), &\text{if $n=6k+5$,}
\end{cases}
\]
where $k\ge 1$.
Then  a simple calculation shows that
	\[
	(I(\Gamma):f_n)=
	\begin{cases}
		(x_2,x_3,x_4), &\text{if $n=2$,}\vspace{1ex}\\
(x_{2i}\mid i\in\left[\frac{n-2}{2}\right])+(x_{n+2j-1}\mid 0\le j\le \frac{n}{2})+(x_{2n}), &\text{if $n\ge 3$ is even,}\vspace{1ex}\\
		(x_{2i}\mid i\in[n]), & \text{if  $n\ge 3$ is odd.}
	\end{cases}
	\]
	By the definition of  the $\mathrm{v}$-number,   we have
	$\mathrm{v}(I(\Gamma))\le \alpha_n$.
\end{proof}

 \subsection{The case of $C_{2n}(2,n)$}
 We will consider the case $a=2$.  Suppose  the vertex set  and  the  edge set  of  $C_{2n}(2,n)$ are  $V(C_{2n}(2,n))=\{x_i\mid i\in[2n]\}$ and
\[
		E(C_{2n}(2,n))=\{x_ix_{i+2}\mid i\in[2n]\}\cup\{x_{j}x_{j+n}\mid j\in[n]\},
\]
respectively.

	By Lemma \ref{decomposition}, it is enough to consider the case where $n\ge 3$ is odd. In this case,
we	can relabel the vertices of $C_{2n}(2,n)$ as follows:
	for each $i\in[n]$,  we replace
	the vertices $x_{2i-1}$ and $x_{n+2i-1}$  with $x_i$ and $y_i$, respectively. Thus,
$V(C_{2n}(2,n))=\{x_i,y_i\mid i\in[n]\}$
and
\begin{align*}
	E(C_{2n}(2,n))=\{x_ix_{i+1},y_iy_{i+1},x_iy_i\mid i\in[n]\}.
	\tag{5}\label{eq:myequation5}
\end{align*}
That is, the graph $C_{2n}(2,n)$ consists of two $n$-cycles formed by the vertex sets $\{x_i \mid i \in [n]\}$ and $\{y_i \mid i \in [n]\}$ with edges $x_i x_{i+1}$ and $y_i y_{i+1}$, respectively. These two cycles are linked by cross-edges of the form $x_i y_i$ for each $i \in [n]$.
To make it easier to understand, we have provided two examples for $n=3$ and $n=5$, see Figure $1$.

\begin{figure}[htbp]
	\centering
	
	\begin{tikzpicture}[scale=1.2,baseline=(u3.center),every node/.style={scale=0.85}]
		\def\R{1.9} 
		\def\r{0.5} 
		
		\coordinate (u1) at (90:\R);
		\coordinate (u2) at (330:\R);
		\coordinate (u3) at (210:\R);
		
		\coordinate (v1) at (90:\r);
		\coordinate (v2) at (330:\r);
		\coordinate (v3) at (210:\r);
		
		\draw (u1) -- (u2) -- (u3) -- cycle;
		
		\draw (v1) -- (v2) -- (v3) -- cycle;
		
		\draw (u1) -- (v1);
		\draw (u2) -- (v2);
		\draw (u3) -- (v3);
		
		\shade [shading=ball, ball color=black] (u1) circle (.06) node [above, yshift=2pt] {$x_1(x_1)$};
		\shade [shading=ball, ball color=black] (u2) circle (.06) node [above right] {$x_3(x_2)$};
		\shade [shading=ball, ball color=black] (u3) circle (.06) node [below left] {$x_5(x_3)$};
		
		\shade [shading=ball, ball color=black] (v1) circle (.06) node [right] {$x_4(y_1)$};
		\shade [shading=ball, ball color=black] (v2) circle (.06) node [below,xshift=-8pt] {$x_6(y_2)$};
		\shade [shading=ball, ball color=black] (v3) circle (.06) node [above left=-3pt] {$x_2(y_3)$};
		
		\node at (0, -\R - 0) {$C_6(2,3)$};
	\end{tikzpicture}
	\hspace{0.3cm} 
	\begin{tikzpicture}[scale=1.05, baseline=(u4.center),every node/.style={scale=0.85}]
		\def\R{1.9}
		\def\r{0.7}
		
		\coordinate (u1) at (90:\R);
		\coordinate (u2) at (18:\R);
		\coordinate (u3) at (306:\R);
		\coordinate (u4) at (234:\R);
		\coordinate (u5) at (162:\R);
		
		\coordinate (v1) at (90:\r);
		\coordinate (v2) at (18:\r);
		\coordinate (v3) at (306:\r);
		\coordinate (v4) at (234:\r);
		\coordinate (v5) at (162:\r);
		
		\draw (u1) -- (u2) -- (u3) -- (u4) -- (u5) -- cycle;
		
		\draw (v1) -- (v2) -- (v3) -- (v4) -- (v5) -- cycle;
		
		\draw (u1) -- (v1);
		\draw (u2) -- (v2);
		\draw (u3) -- (v3);
		\draw (u4) -- (v4);
		\draw (u5) -- (v5);
		
		\shade [shading=ball, ball color=black] (u1) circle (.06) node [above, yshift=2pt] {$x_1(x_1)$};
		\shade [shading=ball, ball color=black] (u2) circle (.06) node [right, xshift=2pt] {$x_3(x_2)$};
		\shade [shading=ball, ball color=black] (u3) circle (.06) node [below right] {$x_5(x_3)$};
		\shade [shading=ball, ball color=black] (u4) circle (.06) node [below left] {$x_7(x_4)$};
		\shade [shading=ball, ball color=black] (u5) circle (.06) node [left, xshift=-2pt] {$x_9(x_5)$};
		
		\shade [shading=ball, ball color=black] (v1) circle (.06) node [above left=-2pt] {$x_6(y_1)$};
		\shade [shading=ball, ball color=black] (v2) circle (.06) node [above,xshift=6pt,yshift=3pt] {$x_8(y_2)$};
		\shade [shading=ball, ball color=black] (v3) circle (.06) node [right] {$x_{10}(y_3)$};
		\shade [shading=ball, ball color=black] (v4) circle (.06) node [below right, xshift=-8pt] {$x_2(y_4)$};
		\shade [shading=ball, ball color=black] (v5) circle (.06) node [below left=-2pt] {$x_4(y_5)$};
		
		\node at (0, -\R - 0.7) {$C_{10}(2,5)$};
	\end{tikzpicture}
	\caption{}
\end{figure}

\begin{Lemma}\label{2oddvertexcovernumber}
	Let $n\ge 3$ be an odd integer, and let $\Gamma=C_{2n}(2,n)$ be a cubic circulant graph. Then $\tau(\Gamma) =n+1$.
\end{Lemma}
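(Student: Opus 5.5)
The graph here is the prism over the odd cycle $C_n$, with edge set given in (\ref{eq:myequation5}). I would prove $\tau(\Gamma)=n+1$ by a lower bound from a parity argument and an upper bound from an explicit cover, following the pattern of Lemma \ref{1vertexcovernumber}.

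\textbf{Lower bound.} Let $C$ be any vertex cover of $\Gamma$. The $n$ cross-edges $x_iy_i$, $i\in[n]$, are pairwise disjoint, so $|C|\ge n$. Suppose $|C|=n$. Then $C$ contains exactly one vertex of each pair $\{x_i,y_i\}$. Put $X=C\cap\{x_i\mid i\in[n]\}$ and $Y=C\cap\{y_i\mid i\in[n]\}$. Then $X$ must cover the $n$-cycle $x_1x_2\cdots x_n$, so $\{i\mid x_i\notin X\}$ is an independent set of that cycle. Similarly, $Y$ must cover the $y$-cycle, and $\{i\mid y_i\notin Y\}=\{i\mid x_i\in X\}$. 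Hence both $\{i\mid x_i\in X\}$ and its complement in $[n]$ are independent sets of the cycle $C_n$. This means $C_n$ is bipartite, which is false since $n$ is odd. Therefore $|C|\ge n+1$.

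\textbf{Upper bound.} Write $n=2r+1$ and take
\[
C_0=\{x_{2i-1}\mid i\in[r+1]\}\cup\{y_{2i}\mid i\in[r]\}\cup\{y_n\}.
\]
Then $|C_0|=(r+1)+r+1=n+1$. The checks are as follows.
\begin{itemize}
\item The $x$-cycle edges $x_ix_{i+1}$ are covered, since odd indices are chosen and both $x_n$ and $x_1$ lie in $C_0$.
\item The $y$-cycle edges $y_iy_{i+1}$ are covered, since even indices are chosen and $y_n$ is added; in particular $y_{n-1}y_n$ and $y_ny_1$ are covered.
\item The cross-edges $x_iy_i$ are covered: by $x_i$ for odd $i$, and by $y_i$ for even $i$.
\end{itemize}

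The only step needing care is the parity argument in the lower bound. The key point is that a cover of size exactly $n$ induces a proper $2$-colouring of the odd cycle. The construction itself is routine.
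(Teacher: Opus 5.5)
Your proof is correct. The upper bound agrees with the paper's up to the symmetry $x_i\leftrightarrow y_i$: the paper uses $\{y_{2i-1}\}\cup\{x_{2j}\}\cup\{x_n\}$, and you use the mirror-image set. The lower bound, however, follows a different route.

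The paper never uses the cross-edges for the lower bound. It observes that the two $n$-cycles on $\{x_i\}$ and $\{y_i\}$ are vertex-disjoint odd cycles, each with vertex cover number $\frac{n+1}{2}$. Any cover of $\Gamma$ must meet each cycle in at least that many vertices, so $\tau(\Gamma)\ge n+1$ in one step.

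You instead use the perfect matching $\{x_iy_i\}$ to get $|C|\ge n$. You then rule out equality by showing that a cover of size exactly $n$ would split $[n]$ into two independent sets of $C_n$, contradicting oddness. This is the same template the paper uses for $C_{2n}(1,n)$ with $n$ even in Lemma \ref{1vertexcovernumber}.

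The paper's argument is shorter and needs only $\tau(C_n)=\frac{n+1}{2}$ for odd $n$. Yours is a bit longer but isolates exactly where the parity of $n$ enters. For even $n$ the same choice of odd-indexed $x$'s and even-indexed $y$'s gives a cover of size $n$, so your argument also explains why the bound jumps from $n$ to $n+1$ precisely when $n$ is odd.
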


\begin{proof}
	Let the vertex set of $\Gamma$ be $V(\Gamma) = \{x_i, y_i \mid i \in [n]\}$, and let the edge set be defined as in (\ref{eq:myequation5}).
	Since $\Gamma$ contains two vertex-disjoint induced odd cycles $C_n^1$ and $C_n^2$ with vertex sets $\{x_i \mid i \in [n]\}$ and $\{y_i \mid i \in [n]\}$, respectively, and each has a vertex cover number of $\frac{n+1}{2}$, we obtain $\tau(\Gamma) \ge \tau(C_n^1) + \tau(C_n^2) = n+1$.
	Conversely, the set
	$$C = \left\{y_{2i-1} \mid i \in \left[\frac{n+1}{2}\right]\right\} \cup \left\{x_{2j} \mid j \in \left[\frac{n-1}{2}\right]\right\} \cup \{x_n\}$$
	forms a vertex cover of $\Gamma$ of size $n+1$.
	Thus, $\tau(\Gamma) = n+1$.
\end{proof}

\begin{Theorem}\label{2oddvnumber}
	Let $n\ge 3$ be an odd integer, and let $\Gamma=C_{2n}(2,n)$ be a cubic circulant graph with $I(\Gamma)$ being its edge ideal. Then
	\[
	\mathrm{v}(I(\Gamma))=\left\lceil\frac{n+1}{3}\right\rceil.
	\]
\end{Theorem}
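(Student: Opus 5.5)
The plan is to prove the two inequalities separately, following the pattern of the proof of Theorem \ref{1vnumber}.

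\textbf{Lower bound.} By Lemma \ref{vedgeideal} there is an independent set $S$ of $\Gamma$ with $|S|=\mathrm{v}(I(\Gamma))$ such that $N_\Gamma(S)$ is a minimal vertex cover of $\Gamma$. Then $\tau(\Gamma)\le |N_\Gamma(S)|$. Since $\Gamma$ is $3$-regular, $|N_\Gamma(S)|\le 3|S|$. Lemma \ref{2oddvertexcovernumber} gives $\tau(\Gamma)=n+1$, so $3|S|\ge n+1$, that is, $\mathrm{v}(I(\Gamma))\ge\lceil\frac{n+1}{3}\rceil$.

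\textbf{Upper bound.} It suffices to exhibit a squarefree monomial $f_n$ of degree $\lceil\frac{n+1}{3}\rceil$ with $(I(\Gamma):f_n)$ equal to a prime in $\Ass(I(\Gamma))$. By Lemma \ref{NG}, this means $S=\supp(f_n)$ is independent and $N_\Gamma(S)$ is a minimal vertex cover. Recall that in the relabelled graph (\ref{eq:myequation5}), $N(x_i)=\{x_{i-1},x_{i+1},y_i\}$ and $N(y_i)=\{y_{i-1},y_{i+1},x_i\}$.

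The guiding idea is to alternate between the two cycles with step $3$: take $x_1,y_4,x_7,y_{10},\dots$. Then
\[
N(\{x_i,y_{i+3}\})=\{x_{i-1},x_{i+1},x_{i+3},y_i,y_{i+2},y_{i+4}\},
\]
so these six neighbours are pairwise distinct. The successive neighbourhoods tile a "parity-alternating" cover: $x_j$ is covered for $j$ of one parity and $y_j$ for the other, over each block of six indices. Because $n$ is odd, this pattern cannot close up around the cycles. Near the wrap-around one extra vertex is used, which is what produces $n+1$ instead of $n$.

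I would therefore split into cases $n=6k+1$, $6k+3$, $6k+5$, and treat small $n$ separately. For $n=3$, take $S=\{x_1,y_2\}$. Then $N(S)=\{x_2,x_3,y_1,y_3\}$, which has size $4=n+1$. Its complement $\{x_1,y_2\}$ is independent, and every vertex of $N(S)$ has a neighbour in the complement, so $N(S)$ is a minimal vertex cover; this gives degree $2=\lceil 4/3\rceil$. For general $n$ I would write $f_n$ as a product over the step-$3$ alternating pattern, with a boundary correction of one or two vertices depending on $n\bmod 6$, chosen so that $\deg f_n=\lceil\frac{n+1}{3}\rceil$.

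\textbf{Verification and main obstacle.} For each case, $(I(\Gamma):f_n)$ is computed directly. It equals the ideal generated by $N_\Gamma(\supp f_n)$ as soon as $\supp f_n$ is independent. One then checks that this variable set $C$ is a minimal vertex cover:
\begin{itemize}
\item the complement of $C$ is independent;
\item every vertex of $C$ has a neighbour outside $C$.
\end{itemize}
Both checks are local and can be done block by block. The main obstacle is choosing the correct boundary pieces for each residue class so that three things hold simultaneously: the degree is exactly $\lceil\frac{n+1}{3}\rceil$, the wrap-around edges $x_nx_1$ and $y_ny_1$ are covered, and minimality survives at the seam. I would first find the seam correction by hand for $n=5,7,9$ and then extrapolate to the period-$6$ families.
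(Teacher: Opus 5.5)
\textbf{Verdict: a genuine gap, though not a wrong approach.} Your lower bound is the paper's argument. Your step-$3$ alternating pattern $x_1,y_4,x_7,y_{10},\dots$ (the variables $x_{6i-5}$ and $y_{6i-2}$) and your choice $S=\{x_1,y_2\}$ for $n=3$ are exactly what the paper uses for the upper bound.

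\textbf{The gap.} For $n\ge 5$ you never specify the seam correction; you only say you would find it by hand and extrapolate. The upper bound consists of nothing but exhibiting a suitable $S$, so as written it is proved only for $n=3$.

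A smaller error: you claim $(I(\Gamma):f_n)$ is generated by $N_\Gamma(\supp f_n)$ as soon as $\supp f_n$ is independent. This is false in general. For an independent set $S$, the colon by $\prod_{v\in S}v$ equals
\[
(N_\Gamma(S))+I\bigl(\Gamma[V(\Gamma)\setminus(S\cup N_\Gamma(S))]\bigr),
\]
which reduces to $(N_\Gamma(S))$ only when $N_\Gamma(S)$ is a vertex cover. You can skip the colon computation entirely: by Lemma \ref{vedgeideal} it suffices to produce some $S\in\mathcal{A}_\Gamma$ of the right size.

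\textbf{How to close it.} The missing piece is short and can be stated uniformly in $n$. Truncate your pattern at index $n-2$ and append $y_{n-1}$:
\[
S=\{x_j\mid j\le n-2,\ j\equiv 1 \pmod 6\}\cup\{y_j\mid j\le n-2,\ j\equiv 4 \pmod 6\}\cup\{y_{n-1}\}.
\]
\begin{itemize}
\item \emph{Agreement with the paper.} For $n=6k+1,6k+3,6k+5$ this is exactly the support of the paper's $f_n$, with appended vertex $y_{6k}$, $y_{6k+2}$, $y_{6k+4}$ respectively. For $n=3,5$ it gives $\{x_1,y_2\}$ and $\{x_1,y_4\}$.
\item \emph{Size.} $S$ has $\lceil\frac{n-2}{3}\rceil+1=\lceil\frac{n+1}{3}\rceil$ elements.
\item \emph{Independence.} The only possible conflict is the edge $y_{n-2}y_{n-1}$. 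It does not occur, because $n-2$ is odd and so $y_{n-2}\notin S$.
\item \emph{Neighbourhood.} Listing the three neighbours of each element gives
\[
N_\Gamma(S)=\{y_{2i-1}\mid i\in[\tfrac{n+1}{2}]\}\cup\{x_{2j}\mid j\in[\tfrac{n-1}{2}]\}\cup\{x_n\}.
\]
This is the cover $C$ from the proof of Lemma \ref{2oddvertexcovernumber}.
\item \emph{Minimality.} Since $|N_\Gamma(S)|=n+1=\tau(\Gamma)$, it is automatically a minimal vertex cover.
\end{itemize}
So the minimality check at the seam, which you flag as the main obstacle, disappears once you aim for a neighbourhood of size exactly $n+1$.
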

\begin{proof}
	Let the vertex set of $\Gamma$ be $V(\Gamma) = \{x_i, y_i \mid i \in [n]\}$.
	By Lemma \ref{vedgeideal}, there exists an independent set $S$ of $\Gamma$ such that $N_{\Gamma}(S)$ is a minimal vertex cover of $\Gamma$ and $|S|=\mathrm{v}(I(\Gamma))$.
	Thus, $\tau(\Gamma)\le |N_{\Gamma}(S)|$.
	Since $\Gamma$ is $3$-regular, we have $|N_{\Gamma}(S)|\le 3|S|$.
	Therefore, $\tau(\Gamma)\le 3|S|$.
	It follows from Lemma \ref{2oddvertexcovernumber} that $\mathrm{v}(I(\Gamma))= |S|\ge \left\lceil\frac{\tau(\Gamma)}{3}\right\rceil=\left\lceil\frac{n+1}{3}\right\rceil$.

	Next, we prove that $\mathrm{v}(I(\Gamma))\le \left\lceil\frac{n+1}{3}\right\rceil$.
	Choose $f_3=x_1y_2$,  $f_5=x_1y_4$,  and
	for any odd integer $n\ge 7$,
	\[
	f_n =
	\begin{cases}
		(\prod\limits_{i\in[k]} x_{6i-5}) (\prod\limits_{i\in[k]} y_{6i-2}) y_{6k}, &\text{if $n = 6k + 1$,}\vspace{1ex}\\
		(\prod\limits_{i\in[k+1]} x_{6i-5}) (\prod\limits_{i\in[k]} y_{6i-2}) y_{6k+2}, &\text{if $n = 6k + 3$,}\vspace{1ex}\\
		(\prod\limits_{i\in[k+1]} x_{6i-5}) (\prod\limits_{i\in[k+1]} y_{6i-2}), &\text{if $n = 6k + 5$,}\\
	\end{cases}
	\]
	where $k\ge 1$.
	Then, by straightforward calculation, we obtain that
	\[
	(I(\Gamma):f_n)=\left( y_{2i-1} \middle| i \in \left[\frac{n+1}{2}\right] \right) + \left( x_{2j} \middle| j \in \left[\frac{n-1}{2}\right] \right) + (x_n).
	\]
	By the definition of the $\mathrm{v}$-number, we have
	$\mathrm{v}(I(\Gamma))\le \left\lceil\frac{n+1}{3}\right\rceil$.
\end{proof}

\subsection{The case of $C_{2n}(a,n)$ with $1\le a<n$}

The cases $a=1$ and $a=2$ for odd $n\ge3$ have already been established.  Therefore, by Lemmas \ref{decomposition} and \ref{sum}, and  direct calculation, we can obtain
 the following theorem, which corresponds to the case where $t=1$ in Theorem \ref{main}:

\begin{Theorem}\label{avnumber}
	Let $\Gamma = C_{2n}(a,n)$ be a cubic circulant graph, where $1 \le a < n$, and let $k = \gcd(a, 2n)$. Then
	\[
	\mathrm{v}(I(\Gamma)) =
	\begin{cases}
		k \cdot \left\lceil \frac{n}{3k} \right\rceil, & \text{if $\frac{n}{k}\equiv 1 \pmod{2}$,}\vspace{1ex}\\
		k \cdot \left\lceil \frac{n+k}{3k} \right\rceil, & \text{if $\frac{n}{k}\equiv 0 \pmod{2}$,}\vspace{1ex}\\
		\frac{k}{2} \cdot \left\lceil \frac{2n+k}{3k} \right\rceil, & \text{otherwise.}
	\end{cases}
	\]
\end{Theorem}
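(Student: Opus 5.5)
The plan is to reduce to the two building blocks already computed, using the structural Lemma \ref{decomposition} together with the additivity Lemma \ref{sum}. For $t=1$, Lemma \ref{sum} reads $\mathrm{v}(I_1+I_2)=\mathrm{v}(I_1)+\mathrm{v}(I_2)$ whenever $I_1,I_2$ are monomial ideals in disjoint sets of variables, since the minimum is over $j=1$ only. If $\Gamma$ is a disjoint union of graphs $\Gamma_1,\dots,\Gamma_r$, then $I(\Gamma)=I(\Gamma_1)+\cdots+I(\Gamma_r)$ with pairwise disjoint variable sets. Induction on $r$ therefore gives $\mathrm{v}(I(\Gamma))=\sum_{i=1}^r\mathrm{v}(I(\Gamma_i))$. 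In particular, $r$ isomorphic copies of a graph $\Delta$ contribute $r\cdot\mathrm{v}(I(\Delta))$.

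Next I will match the three cases of the statement with the two cases of Lemma \ref{decomposition}, where $k=\gcd(a,2n)$. The number $\frac{2n}{k}$ is even exactly when $k\mid n$, i.e. when $\frac nk$ is an integer. So the first two cases of the statement (with $\frac nk$ odd or even) are exactly case (i), and the ``otherwise'' case ($\frac nk\notin\mathbb Z$) is exactly case (ii).

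In case (i), put $N=\frac nk$. Then $\Gamma$ is $k$ copies of $C_{2N}(1,N)$, and $N\ge 2$ because $k\le a<n$ and $k\mid n$. Theorem \ref{1vnumber} applies and gives $\mathrm{v}(I(C_{2N}(1,N)))$:
\begin{itemize}
\item if $N$ is odd, it equals $\lceil N/3\rceil=\lceil \frac{n}{3k}\rceil$;
\item if $N$ is even, it equals $\lceil (N+1)/3\rceil=\lceil\frac{n+k}{3k}\rceil$.
\end{itemize}
Multiplying by $k$ yields the first two lines of the formula.

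In case (ii), put $M=\frac{2n}{k}$, which is odd. Since $k\le a<n$, we have $M>2$, so $M\ge 3$. Here $\Gamma$ is $\frac k2$ copies of $C_{2M}(2,M)$, and Theorem \ref{2oddvnumber} gives $\mathrm{v}(I(C_{2M}(2,M)))=\lceil\frac{M+1}{3}\rceil=\lceil\frac{2n+k}{3k}\rceil$. Multiplying by $\frac k2$ gives the third line.

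There is no real obstacle here. The only points needing care are checking that the size hypotheses of Theorems \ref{1vnumber} and \ref{2oddvnumber} hold ($N\ge2$, and $M\ge3$ odd), and the bookkeeping that translates the case conditions of Lemma \ref{decomposition} into those of the statement. Both are handled by the remarks above.
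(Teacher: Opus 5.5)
Your proposal is correct and follows the paper's own argument: Lemma~\ref{decomposition}, then Lemma~\ref{sum} at $t=1$ (where it becomes plain additivity over components), then Theorems~\ref{1vnumber} and~\ref{2oddvnumber}. You also write out the bookkeeping that the paper dismisses as ``direct calculation'': $\frac{2n}{k}$ is even exactly when $k\mid n$, and the size hypotheses $N=\frac nk\ge 2$ and $M=\frac{2n}{k}\ge 3$ both follow from $k\le a<n$.
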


	\medskip

\section{The $\mathrm{v}$-number of powers of the edge ideals of $C_{2n}(a,n)$}
\label{sec: C{2n}(1,n)}

In this section, we will provide the exact formula for  the $\mathrm{v}$-numbers of powers of the edge ideal of $\Gamma =C_{2n}(a,n)$ for all $t\ge 2$.
By Lemmas  \ref{decomposition} and \ref{sum}, we only need to consider the case where $a = 1$ and the case where $a = 2$ and $n\ge 3$ is odd.  So, for simplicity, we shall focus solely on these two cases.

 \subsection{The case of $\Gamma=C_{2n}(1,n)$}
	    \label{subsec: C_{2n}(1,n)}
	 	In this subsection, we consider the case where $a = 1$.
When $n=2$, $\Gamma$ is a complete graph with $4$ vertices and so   its edge ideal $I(\Gamma)$ has a linear resolution by \cite[Theorem 5.5]{CZW}. It follows from \cite[Theorem 5.1]{F} that  $\mathrm{v}(I(\Gamma)^t)=2t-1$ for all $t\ge 1$. In the following, we assume that $n\ge 3$.

	    \begin{Theorem}
	\label{main1}
	Let $\Gamma = C_{2n}(1, n)$ be a cubic circulant graph, and let $I(\Gamma)$ be its edge ideal. Then
	\[
	\mathrm{v}(I(\Gamma)^t) = 2t - 1 \quad \text{for all } t \ge \left\lfloor\frac{n}{2}\right\rfloor.
	\]
\end{Theorem}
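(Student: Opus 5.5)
Lower bound: by Lemma \ref{lowbound}, $\mathrm{v}(I(\Gamma)^t)\ge \alpha(I(\Gamma)^t)-1=2t-1$ for every $t\ge 1$, since $I(\Gamma)^t$ is generated in degree $2t$. So everything reduces to the upper bound $\mathrm{v}(I(\Gamma)^t)\le 2t-1$ for $t\ge \lfloor n/2\rfloor$. For this I will construct a monomial $f$ of degree $2t-1$ and a prime $\mathfrak p\in\Ass(I(\Gamma))$ with $(I(\Gamma)^t:f)=\mathfrak p$. By Lemma \ref{colon} it suffices to find $f$ with $\deg_{\mathfrak p}(f)=t-1$ and $\mathfrak p\subseteq (I(\Gamma)^t:f)$. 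Since $\deg f=2t-1$, this forces $f$ to have exactly $t$ units of degree outside $\mathfrak p$, i.e. in the maximal independent set $V\setminus\mathfrak p$.

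The natural shape is $f=x_1\cdot\prod_{i=1}^{t-1}e_i$, where each $e_i$ is an edge with one endpoint in $\mathfrak p$ and the other in the complement $S=V(\Gamma)\setminus\mathfrak p$, and $x_1\in S$. Then $\deg_{\mathfrak p}f=t-1$ automatically. The requirement $\mathfrak p\subseteq (I^t:f)$ means: for every vertex $y\in\mathfrak p$, either $y$ is adjacent to $x_1$, or there is an even-connection (Definition \ref{even-connectedness}) from $y$ to $x_1$ through the edges $e_i$. Equivalently, $yx_1\in (I^t:\prod e_i)$ by Theorem \ref{edgecolon}, and then $y f\in I^t$. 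So I want the edges $e_1,\dots,e_{t-1}$, viewed as a ``path system'' alternating $S$–$\mathfrak p$, to reach every vertex of $\mathfrak p$ from $x_1$ by walks of the form $x_1-p-s-p-s-\cdots-y$, using each $e_i$ at most its multiplicity.

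Concretely, in the odd case I take $\mathfrak p=(x_{2i}\mid i\in[n])$ and $S=\{x_{2i-1}\}$. I then choose the edges along the $2n$-cycle emanating from $x_1$ in both directions: $x_2x_3, x_4x_5,\dots$ and $x_{2n}x_{2n-1},x_{2n-2}x_{2n-3},\dots$. The diagonals $x_jx_{j+n}$ (which join an odd and an even index when $n$ is odd) give shortcuts. With the walk $x_1,x_2,x_3,x_4,\dots$, each even vertex $x_{2j}$ on one side is reached after using $j-1$ edges. Covering both halves of the cycle thus needs roughly $n/2$ edges. Combined with a diagonal shortcut, I expect $\lfloor n/2\rfloor-1$ edges to suffice, giving $t_0=\lfloor n/2\rfloor$. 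For larger $t$, I pad by repeating an edge already used (for example multiplying by $(x_1x_2)^{t-t_0}$). Lemma \ref{colon} still applies, and the colon only grows while remaining contained in $\mathfrak p$, since by Lemma \ref{colon} the colon equals $\mathfrak p$ exactly. In the even case I use the minimal cover from Theorem \ref{1vnumber}, $\mathfrak p=(x_{2i}\mid i\in[\frac{n-2}{2}])+(x_{n+2j-1}\mid 0\le j\le \frac n2)+(x_{2n})$, with the analogous chain of edges.

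The main obstacle is the explicit bookkeeping. I must check that $t-1=\lfloor n/2\rfloor-1$ cross edges really suffice to even-connect $x_1$ to every vertex of $\mathfrak p$, and I must verify membership $yf\in I^t$ for each $y$, in both parity cases. I will do this by exhibiting, for each $y\in\mathfrak p$, an explicit factorization of $yf$ as a product of $t$ edges times a monomial, shifting along the chain ($x_1x_2\cdot x_3x_4\cdots x_{2j-1}x_{2j}$ replacing $x_2x_3\cdots x_{2j-2}x_{2j-1}$ times $x_1 x_{2j}$). I will handle the vertices reached via the diagonal $x_{j}x_{j+n}$ separately. Small $n$ (say $n=3,4,5$) will be checked directly.
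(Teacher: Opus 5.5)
Your framework matches the paper's. You take the lower bound from Lemma~\ref{lowbound}, then an $f$ of degree $2t-1$ with $\deg_{\mathfrak p}(f)=t-1$ and $\mathfrak p\subseteq(I^t:f)$, so that Lemma~\ref{colon} applies. The surplus $t-\lfloor n/2\rfloor$ is absorbed by powers of an edge with exactly one endpoint in $\mathfrak p$; the paper uses $(x_nx_{n+1})^{t-\mu}$. In the odd case your chain from $x_1$ is the paper's up to rotation, and it does work, though you only say you expect it to. The clean way to see it: if $f=s_0p_1s_1\cdots p_{t-1}s_{t-1}$ is a path with $s_i\notin\mathfrak p$ and $p_i\in\mathfrak p$, then for $y\in N_\Gamma(s_\ell)$ one has $yf=(ys_\ell)\prod_{r<\ell}(s_rp_{r+1})\prod_{r>\ell}(p_rs_r)\in I^t$. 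So everything reduces to $\bigcup_i N_\Gamma(s_i)=\mathfrak p$. For $t=\frac{n-1}{2}$ consecutive odd vertices, the cycle edges reach $t+1$ consecutive even vertices and the diagonals reach exactly the remaining $t$.

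The genuine gap is the even case, which you leave as ``the analogous chain''. In fact no chain through $x_1$ works there.
\begin{itemize}
\item With $\mathfrak p$ as in Theorem~\ref{1vnumber}, both $x_{2n},x_{2n-1}$ and both $x_{n-2},x_{n-1}$ lie in $\mathfrak p$, so the cycle chain from $x_1$ stops alternating immediately.
\item Any chain satisfies $\bigl|\bigcup_i N_\Gamma(s_i)\bigr|\le |N_\Gamma(s_0)|+\sum_{i\ge1}|N_\Gamma(s_i)\setminus N_\Gamma(s_{i-1})|\le 2t+1$. At $t=\frac n2$ this equals $n+1=|\mathfrak p|$, so there is no slack.
\item $x_n$ is the only neighbour of $x_{n-1}$ outside $\mathfrak p$, so $x_n$ must be one of the $s_i$.
\item $x_n$ and $x_1$ share the two neighbours $x_{n+1},x_{2n}$, so a chain containing both covers at most $n$ vertices.
\end{itemize}
For example, with $n=6$ and $t=3$ one checks directly that the variables of $f$ outside $\mathfrak p$ are forced to be $x_6,x_8,x_{10}$.

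The missing idea is to run the chain through the other half of the cycle. The paper takes $f=(x_nx_{n+1})^{t-\mu}\prod_{i=0}^{2\mu-2}x_{n+i}$ with $\mu=\lfloor n/2\rfloor$. Its vertices $x_n,x_{n+2},\dots,x_{n+2\mu-2}$ lie outside $\mathfrak p$ and have neighbourhood exactly $\mathfrak p$. This single formula handles both parities.
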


\begin{proof}
	Let $I = I(\Gamma)$ and $\mu = \lfloor\frac{n}{2}\rfloor$. By Lemma \ref{lowbound}, we have $\mathrm{v}(I^t) \ge 2t - 1$. It remains to prove $\mathrm{v}(I^t) \le 2t - 1$.
To this end,  we choose $f = (x_nx_{n+1})^{t-\mu}\left(\prod_{i=0}^{2\mu-2}x_{n+i}\right)$ and
	\[
	\mathfrak{p} =
	\begin{cases}
		(x_{2i} \mid i \in [\frac{n-2}{2}]) + (x_{n+2j-1} \mid 0 \le j \le \frac{n}{2}) + (x_{2n}), &\text{if $n$ is even,}\vspace{1ex}\\
		(x_{2i} \mid i \in [n]), & \text{if $n$ is odd.}
	\end{cases}
	\]
	Then $\deg(f) = 2t - 1$ and $\mathfrak{p} \in \operatorname{Ass}(I)$ by Theorem \ref{1vnumber}.
	Let $A = \{x_{n+2\ell} \mid 0 \le \ell \le \mu-1\}$, and then it is easy to check that $\mathfrak{p} = (x_i \mid x_i \in N_{\Gamma}(A))$.
	We will prove that $(I^t:f) = \mathfrak{p}$, thus $\mathfrak{p} \in \operatorname{Ass}(I^t)$ and the result holds.
	
	We shall use Lemma \ref{colon} to prove $(I^t:f) = \mathfrak{p}$, so we must verify the conditions of Lemma \ref{colon}.
	Since $\supp(\mathfrak{p}) \cap \supp(f) = \{x_{n+2i-1} \mid i \in [\mu-1]\}$, we have
	\[
	\deg_{\mathfrak{p}}(f) = \sum_{i \in [\mu-1]} \deg_{x_{n+2i-1}}(f) = t - 1.
	\]
	For any $x_i \in \mathfrak{p}$, by the expression of $\mathfrak{p}$, we have $x_i \in N_{\Gamma}(x_{n+2\ell})$, where $0 \le \ell \le \mu-1$.
	Thus
	\[
	x_if = (x_ix_{n+2\ell})\left(\prod_{r=0}^{\ell-1}x_{n+2r}x_{n+2r+1}\right)\left(\prod_{j=\ell+1}^{\mu-1}x_{n+2j-1}x_{n+2j}\right)(x_nx_{n+1})^{t-\mu} \in I^t.
	\]
	Hence $\mathfrak{p} \subseteq (I^t:f)$. Therefore, by  Lemma \ref{colon}, $(I^t : f) = \mathfrak{p}$.
\end{proof}

\begin{Remark}
	\label{admissible1}
From the proof of the above theorem, we observe that
if we choose $U=\emptyset$ and $V=\{x_{n+2\ell}\mid 0\leq \ell\leq \lfloor n/2\rfloor-1\}$, then we can check that $(U,V)$ is an admissible pair as defined in \cite[Definition 1.1]{HV}. Thus,
it follows from \cite[Theorem 1.2]{HV} that $\mathrm{vstab}(I(\Gamma))
\leq \lfloor n/2\rfloor$. Therefore, by \cite[Lemma 2.2]{HV},  for all $t\geq\lfloor n/2\rfloor$, we have $\mathrm{v}(I(\Gamma)^t)=2t-1$.

\textup{However, in  \cite{HV}, Hien and Vu did not provide any techniques for finding admissible pairs. For general graphs, finding admissible pairs and computing $\lambda(U,V)$ in \cite[Theorem 1.2]{HV}  are bothquite challenging; therefore, while their conclusions have significant theoretical value, they are challenging to implement in practice. It was only after completing the proof of the above theorem and studying the associated prime ideal $\mathfrak{p}$ that we discovered the method to choose $U$ and $V$.}
\end{Remark}

\begin{Remark}
	\label{smalln} When $3\le n\le 5$ and $t\ge 1$,
by   Theorems  \ref{1vnumber} and  \ref{main1}, we have
\[
\mathrm{v}(I(\Gamma )^t) =
\begin{cases}
	2,          &\text{if $n\in[4,5]$ and $t=1$,}\\
	2t - 1,   &\text{otherwise.}
\end{cases}
\]
\end{Remark}

In the following, we  will  consider cases where $n\ge 6$ and
 $2 \le t < \lfloor \frac{n}{2} \rfloor$.

\begin{Lemma}\label{simplegraph}
	Let $\Gamma = C_{2n}(1, n)$ be a cubic circulant graph, and  $e_1, \ldots, e_{t-1}$ be its $t-1$ edges.
Suppose $I(\Gamma)$ is  the  edge ideal. of $\Gamma$. Then
 $(I(\Gamma)^{t} : \prod\limits_{i=1}^{t-1}e_i)$ can be regarded as the edge ideal of a simple graph.
\end{Lemma}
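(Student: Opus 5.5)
By Theorem \ref{edgecolon}, the ideal $(I(\Gamma)^{t}:\prod_{i=1}^{t-1}e_i)$ is generated by monomials of degree two, namely edges of $\Gamma$ together with products $uv$ where $u,v$ are even-connected with respect to $\prod_{i=1}^{t-1}e_i$. To regard it as the edge ideal of a simple graph, it suffices to show that no generator is a square $u^2$. Equivalently, no vertex $u$ is even-connected to itself. My plan is to prove this directly from the structure of $\Gamma=C_{2n}(1,n)$. Whenever an even-connection from $u$ to $u$ exists, I will exhibit an edge of $\Gamma$, or a squarefree even-connected product $uv$, that divides it, so that $u^2$ is never a minimal generator.

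First, I translate self even-connection into an odd closed walk. If $u=x_1,x_2,\dots,x_{2k}=u$ is an even-connection, then it is a closed walk of odd length $2k-1$ through $u$. Its edges in positions $x_{2\ell}x_{2\ell+1}$ are among the $e_i$, respecting multiplicities. So $u^2$ arises only if $u$ lies on an odd closed walk alternating between arbitrary edges and the chosen edges. Note that $C_{2n}(1,n)$ is bipartite when $n$ is odd: $i$ and $i+1$ have different parity, and so do $i$ and $i+n$. In that case there are no odd closed walks, and we are done immediately.

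The remaining case is $n$ even, where there are odd cycles, and this is the main obstacle. For $n$ even, the edges of $\Gamma$ split into two kinds. Rim edges $x_ix_{i+1}$ change the parity of the index. Spokes $x_jx_{j+n}$ preserve it. Hence every odd closed walk uses an odd number of rim edges, and it must traverse the rim in a way that wraps around, using spokes. For such a walk, I would like to shortcut: take the first repeated vertex, or use the walk's first and last edges $ux_2$ and $x_{2k-1}u$. The goal is to show that some proper subwalk $x_1,\dots,x_{2j}$ with $x_{2j}\neq u$ is itself an even-connection, or that $x_{2j}=v$ with $uv\in E(\Gamma)$.

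Concretely, I would argue as follows. Consider the vertex $x_{2k-1}$, a neighbor of $u$. If the sequence $x_1,\dots,x_{2k-2}$ ends at a vertex $w$ and $w x_{2k-1}$ is a chosen edge, then I truncate differently: $x_1,\dots,x_{2k-1}$ has even length $2k-2$. I then want an even-connection from $u$ to $x_{2k-1}$ or to some other vertex $v\ne u$, giving a generator $uv$ that still fails to divide $u^2$. This alone does not suffice, since divisibility needs $uv\mid u^2$, so I need a different approach.

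The better approach is the standard one from \cite{B}: $u^2\in (I^{t}:\prod e_i)$ is a minimal generator unless $u\in$ some degree-one part. The alternative is to show that whenever $u$ is self even-connected, then $u$ times each neighbor is already forced, and to show that $u^2$ is redundant because $u\cdot u$ is divisible by a generator $u x$ only if $x=u$. Since that fails, I must instead prove that self even-connections are impossible, or that they imply $u$ lies in a chosen edge $e_i=uw$. In the latter case, I would show that $u^2 \prod e_i\in I^{t}$ forces $u\in(I^t:\prod e_i)$, which contradicts the degree-two generation.

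So the key step is a parity/geometry analysis of odd alternating closed walks in $C_{2n}(1,n)$ with $n$ even, using the "smallest $k$" minimality as in Lemma \ref{lowbound1}. I expect this case analysis to be the hardest part. I would first attempt it by showing that a minimal such walk must use a spoke as a chosen edge, and then derive a contradiction via the 3-regularity of $\Gamma$.
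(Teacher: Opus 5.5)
Your reduction to excluding squares $u^2$ is correct, and so is your treatment of odd $n$ via bipartiteness; both agree with the paper. The gap is the case of even $n$. You never find an argument ruling out self even-connections there. Your intermediate attempts (finding a squarefree generator dividing $u^2$, or deducing $u\in(I^t:\prod e_i)$ when $u$ lies on a chosen edge) are, as you partly noticed, not valid. The strategy you end with, minimality of $k$ plus $3$-regularity, cannot work either, because for even $n$ self even-connections genuinely exist once $t$ is large. For example, take $t=\frac n2+1$ and $e_j=x_{2j}x_{2j+1}$ for $j\in[\frac n2]$. Then
\[
x_1^2\prod_{j=1}^{n/2}x_{2j}x_{2j+1}=(x_1x_2)(x_3x_4)\cdots(x_{n-1}x_n)(x_{n+1}x_1)\in I(\Gamma)^t .
\]
Since $x_1$ is not in the colon for degree reasons, $x_1^2$ is a minimal generator. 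So the lemma depends on the standing hypothesis $2\le t<\lfloor n/2\rfloor$ of that part of the paper, and any proof must use it; your proposal never brings $t$ in. Also, your parity remark is reversed. For even $n$, an odd closed walk uses an \emph{even} number of rim edges and an \emph{odd} number of spokes.

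The missing idea is to bound the length of the walk from both sides.
\begin{itemize}
\item Conditions (2) and (3) of Definition \ref{even-connectedness} allow at most $t-1$ chosen edges, so $k\le t$. Hence the odd closed walk, and therefore an odd cycle contained in it, has length at most $2t-1$.
\item Take an odd cycle of length $m$ with $a$ steps $+1$, $b$ steps $-1$ and $c$ spokes. Then $a-b+cn\equiv 0\pmod{2n}$, say $a-b=n(2\delta-c)$. Since $n$ is even, $a-b$ is even. Because $m=a+b+c$ is odd, $c$ must be odd. Then $2\delta-c$ is odd, so $|a-b|\ge n$, giving $m\ge |a-b|+c\ge n+1$.
\end{itemize}
Combining the two bounds gives $2t-1\ge n+1$, which contradicts $t<\lfloor n/2\rfloor$. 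This quantitative step, not a structural impossibility argument, is what the even case requires.
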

\begin{proof}
	By Theorem \ref{edgecolon}, the colon ideal  $(I(\Gamma)^{t} : \prod\limits_{i=1}^{t-1}e_i)$ is generated by monomials of degree two. We only need to show that $x_i^2\notin \mathcal{G}\bigl((I(\Gamma)^{t} : \prod\limits_{i=1}^{t-1}e_i\bigr)$ for any $i\in[2n]$.
	
	Suppose for a contradiction that $x_i^2\in (I(\Gamma)^{t} : \prod\limits_{i=1}^{t-1}e_i)$ for some $i\in[2n]$. Since $\Gamma$  contains no loops,  we have   $x_i^2\notin E(\Gamma)$.   By Theorem \ref{edgecolon}, $x_i$ is even-connected to itself with respect to $\prod\limits_{i=1}^{t-1}e_i$.  Thus, there exists a   walk  $W: x_i = x_{\ell_1}, x_{\ell_2}, \ldots, x_{\ell_{2k}} = x_i$  of  length $2k-1$  satisfying  Definition  \ref{even-connectedness}.  Hence,  $W$ contains an odd cycle.
Furthermore,  by  (2) and (3) of Definition 	\ref{even-connectedness}, we have $k  \le t$.

We consider the following two cases depending on the parity of $n$:
	
\smallskip
	(I) If  $n$ is odd, then  $\Gamma$ is bipartite with bipartition  $V(\Gamma) = A \sqcup B$, where
 $A = \{x_{2i} \mid i\in[n]\}$ and $B = \{x_{2i-1} \mid i\in[n]\}$.   Thus  $\Gamma$ contains no odd cycles, which  contradicts the fact that  $W$ contains an odd cycle.
	
		\smallskip
(II) Suppose $n$ is even. We will prove that  $k\ge \frac{n+2}{2}$.  This will imply  $t\ge k\ge \frac{n+2}{2}$, which  contradicts the fact that
$t < \lfloor \frac{n}{2} \rfloor$.

	To this end, we consider an odd cycle $C$ of length $m$ in $W$ with vertex set $V(C)=\{x_{v_i}\mid i\in[m]\}$ and edge set $E(C)=\{x_{v_i} x_{v_{i+1}}\mid i\in[m]\}$, where $x_{v_{m+1}}=x_{v_1}$.
	For each $i \in [m]$, let $v_{i+1}-v_i \equiv d_i \pmod{2n}$, where $d_i \in \{1, -1, n\}$.
	When we traverse once along $C$, we have $\sum_{i\in[m]}(v_{i+1}-v_i)\equiv 0\pmod{2n}$, that is, $\sum_{i\in[m]}d_i\equiv 0 \pmod{2n}$.

	Let $a=|\{i\in[m] \mid  d_i=1\}|$, $b=|\{i\in[m]\mid  d_i=-1\}|$ and $c=|\{i\in[m]\mid d_i=n\}|$.
	Then $\sum_{i\in[m]}d_i=a-b+cn$ and $m = a + b + c $. Therefore, $a - b + cn =2n\delta$ for some integer $\delta$. Thus $|a-b|=n|2\delta-c|$, which is even since $n$ is even.
	It follows that $a+b$ is even. Since $m$ is odd, $c$ must be odd, which implies $|2\delta-c| \ge 1$ is an odd integer. Consequently,
	\[
	m=(a+b)+c \ge |a-b|+c = |n(2\delta-c)|+c \ge n+1.
\]
Since $W$ contains an odd cycle $C$ of length $m$, we have $2k - 1\ge m \ge n + 1$, yielding $k\ge \frac{n+2}{2}$.
\end{proof}

\begin{Lemma}\label{lowbound2}
	Let $t,n $ be two  integers such that $2 \le t < \lfloor \frac{n}{2} \rfloor$ and $n\ge 6$.
	 Let $\Gamma=C_{2n}(1, n)$ be a cubic circulant graph and   $I(\Gamma)$   its edge ideal. Then
	\[
	\mathrm{v}(I(\Gamma)^t) \ge 2t - 2 +\Theta_{n,t},  \text{\ where \ } \Theta_{n,t}=
	\begin{cases}
		\left\lceil \frac{n - 2t +2}{3} \right\rceil,   &\text{if $n$ is odd,}\\
		\left\lceil \frac{n-2t+3}{3} \right\rceil, &\text{if $n$ is even.}
	\end{cases}
	\]
\end{Lemma}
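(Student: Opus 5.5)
This is essentially a direct application of Lemma \ref{lowbound1}, whose hypotheses have already been prepared.

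First, Lemma \ref{simplegraph} applies because $2\le t<\lfloor n/2\rfloor$ and $n\ge 6$. It says that for any edges $e_1,\ldots,e_{t-1}$ of $\Gamma$, the colon ideal $(I(\Gamma)^t:\prod_{i=1}^{t-1}e_i)$ is the edge ideal of a simple graph. Since $\Gamma=C_{2n}(1,n)$ has $a=1\in\{1,2\}$ and $t\ge 2$, every hypothesis of Lemma \ref{lowbound1} holds. That lemma therefore gives
\[
\mathrm{v}(I(\Gamma)^t)\ge 2t-2+\left\lceil\frac{\tau(\Gamma)-2t+2}{3}\right\rceil .
\]

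Next, substitute the vertex cover number from Lemma \ref{1vertexcovernumber}: $\tau(\Gamma)=n$ if $n$ is odd and $\tau(\Gamma)=n+1$ if $n$ is even. Then $\tau(\Gamma)-2t+2$ equals $n-2t+2$ for odd $n$ and $n-2t+3$ for even $n$. This is exactly $\Theta_{n,t}$ inside the ceiling, so the stated bound follows.

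No step is a real obstacle here. The substantive work, namely the counting argument bounding $|N_H(\supp(g))\setminus N_\Gamma(\supp(g))|$ by $2(t-1)$ and the exclusion of squares $x_i^2$ from the colon ideal, was already done in Lemmas \ref{lowbound1} and \ref{simplegraph}. The only thing to check is that the range $t<\lfloor n/2\rfloor$ assumed here matches the range in which Lemma \ref{simplegraph} was proved. It does, since for even $n$ that lemma's proof produced the contradiction $t\ge (n+2)/2$, which is incompatible with $t<\lfloor n/2\rfloor$.
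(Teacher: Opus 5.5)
Your proposal is correct and is exactly the paper's proof: you use Lemma \ref{simplegraph} to satisfy the hypothesis of Lemma \ref{lowbound1}, and then substitute $\tau(\Gamma)$ from Lemma \ref{1vertexcovernumber} to get $\Theta_{n,t}$. You also check that the range $t<\lfloor n/2\rfloor$ is what the even-$n$ case of Lemma \ref{simplegraph} requires (the odd case holds for all $t$ by bipartiteness), which the paper leaves implicit.
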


\begin{proof}
	Let $I=I(\Gamma)$ and  let $e_1, \ldots, e_{t-1}$ be  $t-1$ edges of $\Gamma$.
By Lemma \ref{simplegraph}, the colon ideal $(I^t:\prod\limits_{i=1}^{t-1}e_i)$ can be regarded as  the edge ideal of a simple graph.
The conclusion then follows from Lemmas \ref{lowbound1} and \ref{1vertexcovernumber}.
\end{proof}

\begin{Theorem}
\label{main1(1)}
Let $t,n $ be two  integers such that $2 \le t < \lfloor \frac{n}{2} \rfloor$ and $n\ge 6$.
	 Let $\Gamma=C_{2n}(1, n)$ be a cubic circulant graph and   $I(\Gamma)$   its edge ideal. Then
\[
\mathrm{v}(I(\Gamma)^t) = 2t - 2 +\Theta_{n,t},
\]
 where $\Theta_{n,t}$ is defined as in Lemma \ref{lowbound2}.
\end{Theorem}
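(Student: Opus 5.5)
The lower bound is exactly Lemma \ref{lowbound2}, so the proof reduces to the upper bound $\mathrm{v}(I(\Gamma)^t)\le 2t-2+\Theta_{n,t}$. I will prove it by exhibiting a monomial $f$ of degree $2t-2+\Theta_{n,t}$ with $(I^t:f)=\mathfrak{p}$, where $\mathfrak{p}$ is the minimal prime used in Theorems \ref{1vnumber} and \ref{main1}:
\[
\mathfrak{p}=(x_{2i}\mid i\in[n]) \text{ if $n$ is odd,}
\]
and the analogous prime of height $n+1$ if $n$ is even. The idea is to combine the two earlier constructions. The $t=1$ witnesses $f_n$ of Theorem \ref{1vnumber} are independent sets, spaced about every third vertex, whose neighborhoods cover $\mathfrak{p}$. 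The witness of Theorem \ref{main1} is a path $x_n,x_{n+1},\dots$ of consecutive vertices, in which each edge factor covers two extra generators of $\mathfrak{p}$.

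\textbf{Construction.} Take the segment $x_n x_{n+1}\cdots x_{n+2t-2}$ of $2t-1$ consecutive vertices. Its even-position vertices $x_{n+2\ell}$, $0\le\ell\le t-1$, cover the generators of $\mathfrak{p}$ lying in the window from $x_{n-1}$ to $x_{n+2t-1}$ together with their antipodes. The remaining $\approx n-2t+2$ (or $n-2t+3$ if $n$ is even) generators of $\mathfrak{p}$ still need to be covered. I cover them with an independent set $B$ of $\Theta_{n,t}$ vertices spaced every six positions, alternating between the two "sides" $x_j$ and $x_{j+n}$, exactly as in the $f_n$ of Theorem \ref{1vnumber}. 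Each vertex of $B$ covers three new generators of $\mathfrak{p}$: two cycle neighbours and one antipode. Set
\[
f=\Bigl(\prod_{i=0}^{2t-2}x_{n+i}\Bigr)\prod_{v\in B}v,
\]
so that $\deg f=2t-1+|B|$. The count is organised so that $|B|=\Theta_{n,t}-1$, with a few residues handled by an extra vertex, giving $\deg f=2t-2+\Theta_{n,t}$. As in Theorem \ref{1vnumber}, I will split into cases according to $n-2t \bmod 6$ and the parity of $n$, and write $B$ explicitly in each case, adjusting the end of the pattern as the $f_n$ did.

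\textbf{Verification.} I apply Lemma \ref{colon}. First, $\deg_{\mathfrak{p}}(f)=t-1$: the odd-offset vertices $x_{n+2i-1}$, $i\in[t-1]$, are exactly those in $\mathfrak{p}$, and $B$ is chosen disjoint from $\supp(\mathfrak{p})$. Second, $\mathfrak{p}\subseteq(I^t:f)$. For $x$ adjacent to some $x_{n+2\ell}$, I use the same factorization as in the proof of Theorem \ref{main1}, which shows $xf\in I^t$. For $x$ adjacent to some $v\in B$, we have $xv\in I$, and the path segment already lies in $I^{t-1}$, so $xf\in I^t$. Finally, I check that every generator of $\mathfrak{p}$ is a neighbour of some vertex in $\supp(f)\setminus\supp(\mathfrak{p})$, which amounts to bookkeeping on residues.

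\textbf{Main obstacle.} The hard part is the explicit case-by-case placement of $B$ so that its cover meets the path's cover with no waste. This matters most near the two endpoints of the segment, where antipodal edges may wrap around, and in the even-$n$ case, where $\mathfrak{p}$ has the irregular extra generators $x_{2n}$ and $x_{n-1}$. I would first treat $n$ odd with $n-2t\equiv 1\pmod 6$ as a template, then shift the pattern to handle the other residues, and check small borderline values such as $t=\lfloor n/2\rfloor-1$ directly.
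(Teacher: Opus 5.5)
Your route is the paper's route. The paper's witness is $f_{t,n}=(x_1x_2\cdots x_{2t-1})\cdot(\tilde f_m/x_1)$, and the rotation $x_i\mapsto x_{i+n-1}$ carries its segment and its prime $\mathfrak p$ (for both parities of $n$) exactly onto your segment $x_n\cdots x_{n+2t-2}$ and your prime. After that, the verification through Lemma~\ref{colon}, $\deg_{\mathfrak p}(f)=t-1$ and the two factorizations is the same as yours.

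The gap is that you never construct $B$, and this is the only step with real content, because the count has no slack. The $t$ vertices $x_{n+2\ell}$ cover exactly $2t+1$ generators, namely $x_{n-1},x_{n+1},\dots,x_{n+2t-1}$ and $x_{2n},x_2,\dots,x_{2t-2}$. So exactly $r$ generators remain, where $r=n-2t-1$ for $n$ odd and $r=n-2t$ for $n$ even (not roughly $n-2t+2$), and $\Theta_{n,t}-1=\lceil r/3\rceil$ exactly. Hence $|B|=\lceil r/3\rceil$ is forced in every residue class. The ``extra vertex'' you allow for some residues would give $\deg f=2t-1+\Theta_{n,t}$, and the upper bound would be lost. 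When $3\mid r$, each vertex of $B$ must cover three new generators with no overlap at all. For $n$ even the far end is also rigid: $x_{2n-1}$ can only be covered by $x_{2n-2}$, because its other neighbours $x_{2n}$ and $x_{n-1}$ lie in $\mathfrak p$ and cannot be used without raising $\deg_{\mathfrak p}(f)$. That such a $B$ exists is exactly what the paper's eight explicit forms of $\tilde f_m$ establish; your proposal only asserts it.

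The gap can be closed uniformly. Write the leftover generators as a two-row ladder $a_p=x_{2t+2p-2}$, $b_p=x_{n+2t+2p-1}$ for $p\in[L]$, with $L=r/2$. For $0\le p\le L-1$, the vertices $\alpha_p=x_{2t+2p-1}$ and $\beta_p=x_{n+2t+2p}$ lie outside $\supp(\mathfrak p)$. They satisfy $N_\Gamma(\alpha_p)=\{a_p,a_{p+1},b_p\}$ and $N_\Gamma(\beta_p)=\{b_p,b_{p+1},a_{p+1}\}$, where $a_0,b_0$ are already covered by the segment. Thus $\alpha_p$ and $\beta_{p+1}$ together cover the six generators in columns $p,p+1,p+2$. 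With $q=\lfloor L/3\rfloor$, take $B=\{\alpha_{3i+1},\beta_{3i+2}\mid 0\le i<q\}$. If $L=3q+1$, add $\beta_{L-1}$; if $L=3q+2$, add $\alpha_{L-1}$ and $\beta_{L-1}$. Then $|B|=\lceil 2L/3\rceil=\lceil r/3\rceil$ and every $a_p,b_p$ is covered. This completes your argument with a single pattern in place of the paper's case analysis modulo $6$.
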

\begin{proof}
	Let $I=I(\Gamma)$ and $m=n-2t+2$. By Lemma \ref{lowbound2}, $\mathrm{v}(I^t)\ge 2(t-1)+\Theta_{n,t}$.
	It remains to establish the reverse inequality.
	
	Let
	\[
	f_{t,n}=
	\left(\prod_{i\in[t-1]}x_{2i}x_{2i+1}\right)\tilde f_m,
	\]
	 where $\tilde{f}_4=x_1 x_{n-1}$, $\tilde{f}_5=x_1 x_{2t+1}$, and for $m\ge 6$,
	\[
	\tilde{f}_m =
	\begin{cases}
		x_1 x_{2t+1} (\prod\limits_{i\in[k]} x_{6i+2t-3}) (\prod\limits_{j\in[k-1]} x_{n+2t+6j}), & \text{if $m=6k$ or $m=6k+1$,} \vspace{1ex}\\
		x_1 x_{n-1} (\prod\limits_{i\in[k-1]} x_{6i+2t-1}) (\prod\limits_{j\in[k]} x_{n+2t+6j-4}), & \text{if  $m = 6k+2$,} \vspace{1ex}\\
		x_1 x_{n-2} (\prod\limits_{i\in[k-1]} x_{6i+2t-1}) (\prod\limits_{j\in[k]} x_{n+2t+6j-4}), & \text{if  $m = 6k+3$,} \vspace{1ex}\\
		x_1 x_{n-3} x_{n-1} (\prod\limits_{i\in[k-1]} x_{6i+2t-1}) (\prod\limits_{j\in[k]} x_{n+2t+6j-4}), & \text{if  $m = 6k+4$,}\vspace{1ex}\\
		x_1 (\prod\limits_{i\in[k+1]} x_{6i+2t-5}) (\prod\limits_{j\in[k]} x_{n+2t+6j-2}), & \text{if  $m = 6k+5$,}
	\end{cases}
	\]
	where $k\ge 1$.  Then $	\deg(f_{t,n})=	\deg(\prod\limits_{i\in[t-1]}x_{2i} x_{2i+1})+\deg(\tilde{f}_m) = 2(t-1)+\Theta_{n,t}$.

	Consider the ideal
	\[
	\mathfrak{p}=
	\begin{cases}
		(x_{2i}\mid i\in[n]), & \text{if $n$ is odd},\\
		(x_{2i},x_{n+2i-1}\mid i\in[\frac{n}{2}])+(x_{2n}),
		& \text{if $n$ is even}.
	\end{cases}
	\]
	It is straightforward to verify that $\supp(\mathfrak{p})$ is a vertex cover of $\Gamma$. By Lemma \ref{1vertexcovernumber}, it is a minimal vertex cover. Hence, $\mathfrak{p}\in\Ass(I)$. Moreover, since
	$\supp(\tilde f_m)\cap\supp(\mathfrak{p})=\emptyset$, we have
	\[
	\deg_{\mathfrak{p}}(f_{t,n})
	=\deg_{\mathfrak{p}}\left(
	\prod_{i\in[t-1]}x_{2i}x_{2i+1}
	\right)=t-1.
	\]
	
	By Lemma \ref{colon}, it suffices to prove that
	\[
	\mathfrak{p}\subseteq (I^t:f_{t,n}).
	\]
	
	For $\ell\in[t]$, we have
	\begin{align*}
		x_{2\ell}f_{t,n}&=(\prod\limits_{i\in[\ell]}x_{2i-1}x_{2i})(\prod\limits_{j\in[\ell,t-1]}x_{2j}x_{2j+1})(\frac{\tilde{f}_m}{x_1})\in I^t,\\
		x_{n+2\ell-1}f_{t,n}&=(x_{2\ell-1}x_{n+2\ell-1})(\prod\limits_{i\in[\ell-1]}x_{2i-1} x_{2i})(\prod\limits_{j\in[\ell,t-1]}x_{2j}x_{2j+1})(\frac{\tilde{f}_m}{x_1})\in I^t.
	\end{align*}
	Moreover,   $x_{2n}f_{t,n}=(x_1x_{2n})(\prod\limits_{i\in[t-1]}x_{2i} x_{2i+1})(\frac{\tilde{f}_m}{x_1})\in I^t$. 	Consequently,
	\[
	(x_{2i}, x_{n+2i-1}\mid i\in[t])+(x_{2n})\subseteq (I^t:f_{t,n}).
	\]
	We consider the following two cases depending on the parity of $n$:
	
	\textbf{Case 1.} Suppose that $n$ is odd. Then $m$ is odd and
	\[
	\mathfrak{p}=(x_{2i}, x_{n+2i-1}\mid i\in[t])+(x_{2n})+\mathfrak{p}_1,
	\]	
	 where
	 \[
	 \mathfrak{p}_1= (x_{2i}\mid i\in[t+1,\frac{n-1}{2}])+(x_{n+2j+1}\mid j\in[t,\frac{n-3}{2}]).
	 \]
	 Since
	 \[
	 \frac{f_{t,n}}{\tilde f_m}
	 =\prod_{i\in[t-1]}x_{2i}x_{2i+1}\in I^{t-1},
	 \]
	 it remains to show that
	 \[
	 x\tilde f_m\in I
	 \qquad\text{for each }x\in\mathcal{G}(\mathfrak{p}_1).
	 \]
	
	 	\textbf{Case 2.} Suppose that $n$ is even. Then $m$ is even, and
	 \[
	 \mathfrak{p}
	 =(x_{2i},x_{n+2i-1}\mid i\in[t])
	 +(x_{2n})+\mathfrak{p}_2,
	 \]
	 where
	 \[
	 \mathfrak{p}_2=
	 (x_{2i}\mid i\in[t+1,\tfrac{n}{2}])
	 +(x_{n+2j+1}\mid j\in[t,\tfrac{n-2}{2}]).
	 \]
	 As in Case 1, it suffices to prove that
	 \[
	 x\tilde f_m\in I
	 \qquad\text{for each }x\in\mathcal{G}(\mathfrak{p}_2).
	 \]
	 In both cases, the required containment follows by substituting the corresponding expression for $\tilde f_m$ and factoring $x\tilde f_m$ as an edge monomial times a monomial. More precisely, one checks the possible residues of $m$ modulo $6$: for odd $n$, these are $m=5,6k+1,6k+3,6k+5$; for even $n$, they are $m=4,6k,6k+2,6k+4$.  For the sake of completeness, we shall verify each of these eight cases separately:
	
	 (i) If  $m=5$,  then  $n=2t+3$ and $\mathfrak{p}_1=(x_{2t+2},x_{4t+4})$.  A simple calculation shows that
	 $x_u\tilde{f}_m=(x_u x_{2t+1})(\frac{\tilde{f}_m}{x_{2t+1}})\in I$
	 for $ x_u\in\mathcal{G}(\mathfrak{p}_1)$.
	
	 (ii) If $m=6k+1$, then $n=2t+6k-1$ and $\mathfrak{p}_1=
	 (x_{2i}\mid i\in[t+1,t+3k-1])+(x_{6k+2t+2j}	\mid j\in[t,t+3k-2])$.
	There are three cases:
	 (a) if  $0\le \ell\le k-1$ and $u \in \{2t+2, 2t+4, 4t+6k+2\}$,  then
	 $x_{6\ell+u}\tilde{f}_{m}=(x_{6\ell+u}x_{6\ell+2t+3})(\frac{\tilde{f}_{m}}{x_{6\ell+2t+3}})\in I$;
	 (b) if $j\in [k-1]$ and $v \in \{2t, 4t+6k-2, 4t+6k\}$, then
	 $x_{6j+v}\tilde{f}_{m}=(x_{6j+v}x_{6j+4t+6k-1})(\frac{\tilde{f}_{m}}{x_{6j+4t+6k-1}}) \in I$;
	 (c) $x_{4t+6k}\tilde{f}_{m}= (x_{4t+6k}x_{2t+1})(\frac{\tilde{f}_{m}}{x_{2t+1}}) \in I$.
	
	 (iii) If  $m=6k+3$,  then $n=2t+6k+1$ and $\mathfrak{p}_1=
	 (x_{2i}\mid i\in[t+1,t+3k])+(x_{6k+2t+2j+2}\mid j\in[t,t+3k-1])$.
There are two cases:
	 (a) if $\ell\in [k]$ and $u \in \{2t-2, 2t, 4t+6k\}$, then
	 $x_{6\ell+u}\tilde{f}_{m}=(x_{6\ell+u}x_{6\ell+2t-1})(\frac{\tilde{f}_{m}}{x_{6\ell+2t-1}}) \in I$;
	 (b) if $j\in [k]$ and $v \in \{2t-4, 4t+6k-4, 4t+6k-2\}$, then
	 $x_{6j+v}\tilde{f}_{m}=(x_{6j+v}x_{6j+4t+6k-3})(\frac{\tilde{f}_{m}}{x_{6j+4t+6k-3}}) \in I$.

	 (iv) If  $m=6k+5$, then  $n=2t+6k+3$ and
	 $\mathfrak{p}_1=
	 (x_{2i}\mid i\in[t+1,t+3k+1])+(x_{6k+2t+2j+4}\mid j\in[t,t+3k])$. 		There are three cases:
	 (a) if $u \in \{2t+2, 4t+6k+4\}$, then
	 $x_{u}\tilde{f}_{m}=(x_{u}x_{2t+1})(\frac{\tilde{f}_{m}}{x_{2t+1}}) \in I$;
	 (b) if $2\le \ell\le k+1$ and $v \in \{2t-6, 2t-4, 4t+6k-2\}$, then
	 $x_{6\ell+v}\tilde{f}_{m}=(x_{6\ell+v}x_{6\ell+2t-5})(\frac{\tilde{f}_{m}}{x_{6\ell+2t-5}}) \in I$;
	 (c) if $j\in [k]$ and $w \in \{2t-2, 4t+6k, 4t+6k+2\}$, we have
	 $x_{6j+w}\tilde{f}_{m}=(x_{6j+w}x_{6j+4t+6k+1})(\frac{\tilde{f}_{m}}{x_{6j+4t+6k+1}}) \in I$.
	
	 	(v) If $m=4$, then $n=2t+2$ and $\mathfrak{p}_2=(x_{2t+2},x_{4t+3})$. A simple calculation shows that $x_{u}\tilde{f}_{m}
	 =(x_{u}x_{2t+1})(\frac{\tilde{f}_{m}}{x_{2t+1}})\in I$
	 for $x_u\in\mathcal{G}(\mathfrak{p}_2)$.
	
	 (vi) If $m=6k$, then $n=2t+6k-2$ and $\mathfrak{p}_2=
	 (x_{2i}\mid i\in[t+1,t+3k-1])+
	 (x_{6k+2t+2j-1}\mid j\in[t,t+3k-2])$. There are three cases:
	 (a) if $0\le \ell\le k-1$ and $u \in \{2t+2,2t+4,4t+6k+1\}$, then $x_{6\ell+u}\tilde{f}_{m}
	 =(x_{6\ell+u}x_{6\ell+2t+3})(\frac{\tilde{f}_{m}}{x_{6\ell+2t+3}})\in I$;
	 (b) if $j\in [k-1]$ and $v \in \{2t,4t+6k-3,4t+6k-1\}$, then $x_{6j+v}\tilde{f}_{m}
	 =(x_{6j+v}x_{6j+4t+6k-2})(\frac{\tilde{f}_{m}}{x_{6j+4t+6k-2}})\in I$;
	 (c) $x_{4t+6k-1}\tilde{f}_{m}
	 =(x_{4t+6k-1}x_{2t+1})(\frac{\tilde{f}_{m}}{x_{2t+1}})\in I$.

	 (vii) If $m=6k+2$, then $n=2t+6k$ and $\mathfrak{p}_2=
	 (x_{2i}\mid i\in[t+1,t+3k])+
	 (x_{6k+2t+2j+1}\mid j\in[t,t+3k-1])$.
	There are two cases:
	 (a) if $\ell\in[k]$ and  $u \in \{2t-2,2t,4t+6k-1\}$, then $x_{6\ell+u}\tilde{f}_{m}
	 =(x_{6\ell+u}x_{6\ell+2t-1})(\frac{\tilde{f}_{m}}{x_{6\ell+2t-1}})\in I$;
	 (b) if $j\in[k]$ and
	 $v \in \{2t-4,4t+6k-5,4t+6k-3\}$, then $x_{6j+v}\tilde{f}_{m}
	 =(x_{6j+v}x_{6j+4t+6k-4})(\frac{\tilde{f}_{m}}{x_{6j+4t+6k-4}})\in I$.

	 (viii) If $m=6k+4$, then $n=2t+6k+2$ and $\mathfrak{p}_2=
	 (x_{2i}\mid i\in[t+1,t+3k+1])+(x_{6k+2t+2j+3}\mid j\in[t,t+3k])$.
	There are three cases:
	 (a) if $\ell\in[k]$ and $u \in \{2t-2,2t,4t+6k+1\}$, then $x_{6\ell+u}\tilde{f}_{m}
	 =(x_{6\ell+u}x_{6\ell+2t-1})(\frac{\tilde{f}_{m}}{x_{6\ell+2t-1}})\in I$;
	 (b) if $j\in[k]$ and $v \in \{2t-4,4t+6k-3,4t+6k-1\}$, then $x_{6j+v}\tilde{f}_{m}
	 =(x_{6j+v}x_{6j+4t+6k-2})(\frac{\tilde{f}_{m}}{x_{6j+4t+6k-2}})\in I$;
	 (c) if $w \in \{2t+6k+2,4t+12k+3\}$, then $x_{w}\tilde{f}_{m}=(x_{w}x_{2t+6k+1})(\frac{\tilde{f}_{m}}{x_{2t+6k+1}})\in I$.
	
Consequently,
	 \[
	 \mathfrak{p}_1\subseteq(I:\tilde f_m)
	 \quad\text{or}\quad
	 \mathfrak{p}_2\subseteq(I:\tilde f_m),
	 \]
	 according to the parity of $n$. It follows that
	 \[
	 \mathfrak{p}\subseteq(I^t:f_{t,n}).
	 \]
	 By Lemma \ref{colon}, we conclude that
	 \[
	 (I^t:f_{t,n})=\mathfrak{p}.
	 \]
	 Therefore, $f_{t,n}$ yields
	 \[
	 \mathrm{v}(I^t)
	 \le \deg(f_{t,n})
	 =2(t-1)+\Theta_{n,t}.
	 \]
	 Combining this with the lower bound gives
	 \[
	 \mathrm{v}(I^t)=2t-2+\Theta_{n,t}.
	 \]
	 This completes the proof.
\end{proof}

 \subsection{The case of $\Gamma=C_{2n}(2,n)$}
	    \label{subsec: C_{2n}(2,n)}
		In this subsection, we consider the cases where $a = 2$ and $n\ge 3$ is  odd.

	\begin{Theorem}\label{main2(1)}
	Let $n\ge 3$ be  odd, and  let $\Gamma=C_{2n}(2, n)$ be a cubic circulant graph and   $I(\Gamma)$   its edge ideal. Then
		\[
		\mathrm{v}(I(\Gamma)^t)=2t-1 \quad\text{ for all } t\ge \frac{n+1}{2}.
		\]
	\end{Theorem}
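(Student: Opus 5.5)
The plan mirrors the proof of Theorem \ref{main1}: a lower bound from Lemma \ref{lowbound}, and a matching upper bound from an explicit monomial $f$ of degree $2t-1$ to which Lemma \ref{colon} applies.

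Throughout, use the labelling (\ref{eq:myequation5}), and write $I=I(\Gamma)$ and $\mu=\frac{n+1}{2}$. Since $I^t$ is generated in degree $2t$, Lemma \ref{lowbound} gives $\mathrm{v}(I^t)\ge 2t-1$. So it remains to find a monomial $f$ with $\deg(f)=2t-1$ and $(I^t:f)=\mathfrak{p}$ for some $\mathfrak{p}\in\Ass(I)$. For $\mathfrak{p}$ I take the prime from the proof of Theorem \ref{2oddvnumber}:
\[
\mathfrak{p}=\left( y_{2i-1} \middle| i \in [\mu] \right) + \left( x_{2j} \middle| j \in [\mu-1] \right) + (x_n).
\]
It is a minimal vertex cover of size $n+1=\tau(\Gamma)$ by Lemma \ref{2oddvertexcovernumber}, so $\mathfrak{p}\in\Ass(I)$.

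Next I choose an independent set $A$ outside $\supp(\mathfrak{p})$ whose vertices lie along a single walk that alternates with vertices of $\mathfrak{p}$. Take the path
\[
P:\; x_1,x_2,x_3,\ldots,x_{n-2},x_{n-1},y_{n-1},
\]
which has $n=2\mu-1$ vertices, and let
\[
A=\{x_1,x_3,\ldots,x_{n-2},y_{n-1}\},
\]
the vertices in the odd positions of $P$, so $|A|=\mu$. One checks that $N_\Gamma(A)=\supp(\mathfrak{p})$:
\begin{itemize}
\item $x_1$ contributes $x_n,x_2,y_1$;
\item $x_{2i+1}$ contributes $x_{2i},x_{2i+2},y_{2i+1}$;
\item $y_{n-1}$ contributes $y_{n-2},y_n,x_{n-1}$.
\end{itemize}
Together these give all $x_{2j}$, all $y_{2i-1}$, and $x_n$. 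Define
\[
f=(x_1x_2)^{t-\mu}\prod_{v\in P}v .
\]
Then $\deg(f)=2(t-\mu)+2\mu-1=2t-1$. The vertices of $P$ lying in $\supp(\mathfrak{p})$ are the $\mu-1$ vertices in even positions, each to the first power, together with the extra factor $x_2^{\,t-\mu}$. Hence $\deg_{\mathfrak{p}}(f)=(\mu-1)+(t-\mu)=t-1$, as Lemma \ref{colon} requires.

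It remains to show $\mathfrak{p}\subseteq(I^t:f)$. Take $z\in\supp(\mathfrak{p})$ and choose a vertex $w$ of $A$ adjacent to $z$, say $w$ in position $2\ell+1$ of $P$. Then $zf$ factors as:
\begin{itemize}
\item the edge $zw$;
\item the $\ell$ consecutive edges of $P$ covering positions $1,\dots,2\ell$;
\item the $\mu-1-\ell$ consecutive edges of $P$ covering positions $2\ell+2,\dots,2\mu-1$;
\item the factor $(x_1x_2)^{t-\mu}$.
\end{itemize}
This is a product of $1+(\mu-1)+(t-\mu)=t$ edges, so $zf\in I^t$. Lemma \ref{colon} then yields $(I^t:f)=\mathfrak{p}$, so $\mathrm{v}(I^t)\le 2t-1$.

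The only delicate point is the hypothesis $t\ge\mu$, which is needed so that the exponent $t-\mu$ is nonnegative. The $x_i$ and $y_i$ are indexed cyclically modulo $n$, so the wrap-around neighbours $x_n$ (of $x_1$) and $y_n$ (of $y_{n-1}$) must be checked explicitly, including in the case $n=3$.
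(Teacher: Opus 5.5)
Your proof is correct, but it takes a different route from the paper.

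\textbf{The paper's route.} The paper uses neither a minimal prime of $I$ nor Lemma \ref{colon}. It takes $f=(y_{n-1}y_n)^{t-\eta}\prod_{i\in[n]}y_i$ with $\eta=\frac{n+1}{2}$, so $f$ runs around the whole odd $n$-cycle on the $y$-vertices. It then shows that $(I^t:f)=\mathfrak{m}$, the maximal ideal. Since $\deg f=2t-1<2t$, the fact $f\notin I^t$ gives $(I^t:f)\subseteq\mathfrak{m}$ for free. The reverse inclusion is just the two factorizations of $x_if$ and $y_if$, and the second one uses the odd cycle to absorb $y_i^2$. This is shorter and needs neither the computation of $\deg_{\mathfrak{p}}(f)$ nor the fact that $\supp(\mathfrak{p})$ is a minimal vertex cover. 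However, it relies essentially on $\Gamma$ being non-bipartite, so that $\mathfrak{m}\in\Ass(I^t)$. It would fail for a bipartite graph such as $C_{2n}(1,n)$ with $n$ odd, where $\Ass(I^t)=\Ass(I)$.

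\textbf{Your route.} You follow the template of Theorem \ref{main1}: an alternating path whose odd-position vertices $A$ satisfy $N_\Gamma(A)=\supp(\mathfrak{p})$, the count $\deg_{\mathfrak{p}}(f)=t-1$, and Lemma \ref{colon}. All the checks go through, including the wrap-around neighbours and the case $n=3$. This route does not depend on odd cycles, so it transfers to the bipartite setting. It also gives sharper information: $\mathrm{v}_{\mathfrak{p}}(I^t)=2t-1$ for the same minimal prime $\mathfrak{p}$ used in Theorems \ref{2oddvnumber} and \ref{main2(2)}. Hence a single associated prime realizes $\mathrm{v}(I^t)$ for every $t\ge 1$.
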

	\begin{proof}
		Let $I=I(\Gamma)$ and $\eta=\frac{n+1}{2}$. As mentioned earlier, for each $i\in[n]$, we relabel the vertices $x_{2i-1}$ and $x_{n+2i-1}$ as $x_i$ and $y_i$, respectively. By Lemma \ref{lowbound}, we have $\mathrm{v}(I^t)\ge 2t-1$. It remains to prove $\mathrm{v}(I^t)\le 2t-1$. 	To this end,  we choose $f=(y_{n-1}y_{n})^{t-\eta}(\prod\limits_{i\in[n]}y_{i})$ and $\mathfrak{m}=(x_i,y_i\mid i\in[n])$. We will prove that $(I^t:f) = \mathfrak{m}$,  the result then follows. Clearly, $(I^t:f)\subseteq \mathfrak{m}$.   For any $i\in[n]$, we have
		\begin{align*}
			x_i f&=(x_i y_i)(y_{n-1}y_{n})^{t-\eta}(\prod\limits_{j\in[\frac{n-1}{2}]}y_{i+2j-1} y_{i+2j})\in I^t,\\
			y_if&=(y_iy_{i+1})(y_{n-1}y_{n})^{t-\eta}(\prod\limits_{j\in[\frac{n-1}{2}]}y_{i+2j} y_{i+2j+1})\in I^t.
		\end{align*}
		Therefore, $\mathfrak{m}\subseteq(I^t:f)$.
	\end{proof}
\begin{Remark}
	\label{admissible2}
From the proof of the above theorem, we observe that
if we choose $U=\emptyset$ and $V=\{y_{2i-1}\mid i\in[\frac{n+1}{2}]\}\cup\{x_n\}$, then we can check that $(U,V)$ is an admissible pair as defined in \cite[Definition 1.1]{HV}. Thus,
it follows from \cite[Theorem 1.2]{HV} that $\mathrm{vstab}(I(\Gamma))
\leq \frac{n+1}{2}$. Therefore, by \cite[Lemma 2.2]{HV},  for all $t\geq\frac{n+1}{2}$, we have $\mathrm{v}(I(\Gamma)^t)=2t-1$.
\end{Remark}

	\begin{Remark}
		\label{smalln2} When $n=3$, by   Theorems  \ref{2oddvnumber} and  \ref{main2(1)}, we have
		\[
		\mathrm{v}(I(C_{2n}(2,n))^t) =
		\begin{cases}
			2,          &\text{if $t=1$,}\\
			2t - 1,   &\text{otherwise.}
		\end{cases}
		\]
	\end{Remark}
	
In the following, we will  consider cases where $n\ge 5$ is odd and $2 \le t < \frac{n+1}{2}$.
	\begin{Lemma}\label{simplegraph1}
		Let $\Gamma=C_{2n}(2, n)$ be a cubic circulant graph, and $e_1, \ldots, e_{t-1}$ be its $t-1$ edges. 	Suppose $I(\Gamma)$ is  the  edge ideal of $\Gamma$. Then $(I(\Gamma)^{t} : \prod\limits_{i=1}^{t-1}e_i)$ can be regarded as the edge ideal of a simple graph.
	\end{Lemma}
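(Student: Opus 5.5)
We follow the strategy of Lemma \ref{simplegraph}. By Theorem \ref{edgecolon}, the colon ideal $(I(\Gamma)^{t} : \prod_{i=1}^{t-1}e_i)$ is generated by monomials of degree two. Each such generator is either an edge of $\Gamma$ or a pair of vertices that are even-connected with respect to $\prod_{i=1}^{t-1}e_i$. It therefore suffices to rule out generators of the form $z^2$ with $z\in\{x_i,y_i\}$.

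Suppose $z^2$ lies in the colon ideal. Since $\Gamma$ has no loops, $z$ must be even-connected to itself. This gives a closed walk $W: z=w_1,w_2,\ldots,w_{2k}=z$ of odd length $2k-1$, and conditions (2) and (3) of Definition \ref{even-connectedness} force $k\le t$. A closed walk of odd length contains an odd cycle $C$ of length $m\le 2k-1\le 2t-1$. Because we are in the range $t<\frac{n+1}{2}$, this gives $m\le 2t-1\le n-2<n$. The goal is therefore to show that every odd cycle of $\Gamma$ has length at least $n$, which yields the contradiction.

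To bound odd cycle lengths we use the relabelled description (\ref{eq:myequation5}). Consider the projection $\pi\colon V(\Gamma)\to \mathbb{Z}_n$ given by $x_i,y_i\mapsto i$. Along $C$ each edge changes the index by $+1$, by $-1$ (cycle edges $x_ix_{i+1}$ or $y_iy_{i+1}$), or by $0$ (cross edges $x_iy_i$). Let $a$, $b$ and $c$ be the numbers of edges of each type, so that $m=a+b+c$ and $a-b\equiv 0\pmod n$.

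The vertices also carry a layer label: $x$-vertices lie in one layer and $y$-vertices in the other. A closed walk returns to its starting layer, so it must use an even number of cross edges, that is, $c$ is even. Since $m$ is odd, $a+b$ is odd, so $a-b$ is odd. Combined with $n\mid a-b$, this forces $a-b\neq 0$, hence $|a-b|\ge n$. Therefore $m\ge a+b\ge |a-b|\ge n$, contradicting $m<n$. So no $z^2$ appears among the minimal generators, and the colon ideal is the edge ideal of a simple graph on $V(\Gamma)$.

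The main point to handle carefully is the parity argument: $c$ is even because each cross edge switches layers, and then the parity of $a+b$ forces $a\ne b$. One should also check that passing from the closed walk to an odd cycle contained in it is legitimate; this is the same standard fact used in Lemma \ref{simplegraph}. The rest follows that proof verbatim.
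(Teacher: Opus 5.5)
Your proposal is correct and follows essentially the same route as the paper: rule out $z^2$, use an odd closed walk with $k\le t$, extract an odd cycle, and show every odd cycle of $\Gamma$ has length at least $n$ by counting edge types. Your projection to $\mathbb{Z}_n$ combined with the layer-parity count (here $c$ is even, whereas in Lemma \ref{simplegraph} $c$ is odd) is exactly the argument the paper only indicates with ``as in the case of Lemma \ref{simplegraph}'', and you have supplied those omitted details correctly.
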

	\begin{proof}
		By Theorem \ref{edgecolon}, the colon ideal  $(I(\Gamma)^{t} : \prod\limits_{i=1}^{t-1}e_i)$ is generated by monomials of degree two. We only need to show that $x_i^2\notin \mathcal{G}\bigl((I(\Gamma)^{t} : \prod\limits_{i=1}^{t-1}e_i\bigr)$ for any $i\in[2n]$.
		
		Suppose for a contradiction that $x_i^2\in (I(\Gamma)^{t} : \prod\limits_{i=1}^{t-1}e_i)$ for some $i\in[2n]$. Since $\Gamma$  contains no loops,  we have   $x_i^2\notin E(\Gamma)$. By Theorem \ref{edgecolon}, $x_i$ is even-connected to itself with respect to $\prod\limits_{i=1}^{t-1}e_i$.  Thus, there exists a   walk  $W: x_i = x_{\ell_1}, x_{\ell_2}, \ldots, x_{\ell_{2k}} = x_i$  of  length $2k-1$  satisfying  Definition  \ref{even-connectedness}.  Hence,  $W$ contains an odd cycle. Furthermore,  by  (2) and (3) of Definition 	\ref{even-connectedness}, we have $k  \le t$.
		
		As in the case of Lemma \ref{simplegraph} where $n$ is even, we can prove that $k\ge \frac{n+1}{2}$.  Thus $t\ge k\ge \frac{n+1}{2}$, which  contradicts the fact that 	$t < \frac{n+1}{2}$.
	\end{proof}
	
	The following lemma can be shown by similar arguments as in Lemma \ref{lowbound2}, we omit its proof.
	\begin{Lemma}\label{lowbound3}
	Let $t,n $ be two  integers such that $2 \le t < \frac{n+1}{2}$ and $n\ge 5$ is odd.
	Let $\Gamma=C_{2n}(2, n)$ be a cubic circulant graph and  $I(\Gamma)$   its edge ideal. Then
		\[
		\mathrm{v}(I(\Gamma)^t) \ge 2t - 1 + \left\lceil \frac{n-2t}{3} \right\rceil.
		\]
	\end{Lemma}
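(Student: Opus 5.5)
This follows Lemma \ref{lowbound2} almost verbatim, with Lemma \ref{simplegraph1} in place of Lemma \ref{simplegraph}. Put $I=I(\Gamma)$ and fix edges $e_1,\ldots,e_{t-1}$ of $\Gamma$. Since $n\ge 5$ is odd and $2\le t<\frac{n+1}{2}$, Lemma \ref{simplegraph1} shows that $(I^t:\prod_{i=1}^{t-1}e_i)$ is the edge ideal of a simple graph. This is exactly the hypothesis of Lemma \ref{lowbound1} with $a=2$, so that lemma gives
\[
\mathrm{v}(I^t)\ge 2t-2+\left\lceil\frac{\tau(\Gamma)-2t+2}{3}\right\rceil .
\]

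Next I substitute $\tau(\Gamma)=n+1$ from Lemma \ref{2oddvertexcovernumber}. The right-hand side becomes
\[
2t-2+\left\lceil\frac{n-2t+3}{3}\right\rceil
=2t-2+1+\left\lceil\frac{n-2t}{3}\right\rceil
=2t-1+\left\lceil\frac{n-2t}{3}\right\rceil,
\]
using $\lceil x+1\rceil=\lceil x\rceil+1$. This is the claimed bound.

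The only point that needs checking is that the walk-parity argument in Lemma \ref{simplegraph1} really gives $k\ge\frac{n+1}{2}$ for the relabelled graph. The proof there only says it is ``as in'' Lemma \ref{simplegraph}, so I would verify it directly, as follows. In the labelling (\ref{eq:myequation5}), give each $x_i$ and $y_i$ the coordinate $i\in\mathbb{Z}/n$. Cycle edges $x_ix_{i+1}$ and $y_iy_{i+1}$ change the coordinate by $\pm1$, and cross edges $x_iy_i$ change it by $0$ while switching layers. Take an odd closed walk, and let $a$ and $b$ be its numbers of $+1$ and $-1$ steps and $c$ its number of cross steps. Returning to the starting layer forces $c$ to be even, so $a+b$ is odd. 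Returning to the starting coordinate forces $a-b\equiv 0\pmod n$. If $a=b$ then $a+b$ would be even, so $a\neq b$ and hence $|a-b|\ge n$. Therefore the walk has length $2k-1\ge a+b\ge n$, which gives $k\ge\frac{n+1}{2}$.

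Lemma \ref{lowbound1} also requires $k\le t$. This comes from conditions (2) and (3) of Definition \ref{even-connectedness}, exactly as in Lemma \ref{simplegraph}. Combined with $k\ge\frac{n+1}{2}$ it contradicts $t<\frac{n+1}{2}$, so no $x_i^2$ or $y_i^2$ can be a generator. Once this is confirmed the lemma follows, and I expect no real obstacle beyond this parity check.
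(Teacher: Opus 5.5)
Your proposal is correct and is exactly the argument the paper intends when it says the proof is similar to Lemma \ref{lowbound2}: Lemma \ref{simplegraph1} supplies the hypothesis of Lemma \ref{lowbound1}, and substituting $\tau(\Gamma)=n+1$ from Lemma \ref{2oddvertexcovernumber} gives $2t-2+\lceil (n-2t+3)/3\rceil=2t-1+\lceil (n-2t)/3\rceil$. Your direct parity check for odd closed walks in the relabelled graph (forcing $a\neq b$, hence $|a-b|\ge n$ and $k\ge\frac{n+1}{2}$) is also correct; the only slip is wording, since the bound $k\le t$ is used inside the proof of Lemma \ref{simplegraph1} and is not a hypothesis of Lemma \ref{lowbound1}.
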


	\begin{Theorem}
\label{main2(2)}
		Let $t,n $ be two  integers such that $2 \le t < \frac{n+1}{2}$ and $n\ge 5$ is odd.
		Let $\Gamma=C_{2n}(2, n)$ be a cubic circulant graph and  $I(\Gamma)$   its edge ideal. Then
		\[
		\mathrm{v}(I(\Gamma)^t) = 2t - 1 + \left\lceil \frac{n-2t}{3}\right\rceil.
		\]
	\end{Theorem}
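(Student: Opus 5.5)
The argument follows the same two-step pattern as Theorem \ref{main1(1)}: the lower bound is already given by Lemma \ref{lowbound3}, so what remains is to construct, for each odd $n\ge 5$ and each $2\le t<\frac{n+1}{2}$, a monomial $f$ and a prime $\mathfrak{p}\in\Ass(I)$ with $\deg f=2t-1+\lceil\frac{n-2t}{3}\rceil$ and $(I^t:f)=\mathfrak{p}$. We work throughout in the relabelled form (\ref{eq:myequation5}): two $n$-cycles on the $x_i$ and on the $y_i$, joined by the rungs $x_iy_i$.

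For the prime we take the one from Theorem \ref{2oddvnumber},
\[
\mathfrak{p}=(y_{2i-1}\mid i\in[\tfrac{n+1}{2}])+(x_{2j}\mid j\in[\tfrac{n-1}{2}])+(x_n).
\]
Its support is a vertex cover of size $n+1=\tau(\Gamma)$ by Lemma \ref{2oddvertexcovernumber}, hence it is minimal and $\mathfrak{p}\in\Ass(I)$. The monomial will have the form
\[
f=\Bigl(\prod_{i\in[t-1]}e_i\Bigr)\tilde f,
\]
with the $t-1$ edges chosen along one end of the ladder. A natural choice is $e_i=x_{2i}x_{2i+1}$, mirroring $f_{t,n}$ in Theorem \ref{main1(1)}; the alternative $y_{2i-1}y_{2i}$ would be decided by which choice makes the bookkeeping work. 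We need:
\begin{itemize}
\item exactly one endpoint of each $e_i$ lies in $\mathfrak{p}$;
\item $\supp(\tilde f)\cap\supp(\mathfrak{p})=\emptyset$.
\end{itemize}
Together these give $\deg_{\mathfrak{p}}(f)=t-1$. By Lemma \ref{colon} it then suffices to prove $\mathfrak{p}\subseteq(I^t:f)$.

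The generators of $\mathfrak{p}$ split into two groups.
\begin{itemize}
\item \emph{Generators near the used edges.} These are the roughly $2t$ generators among $x_2,\dots,x_{2t}$ and $y_1,\dots,y_{2t+1}$, together with $x_n$. For each such variable $z$ we produce an explicit telescoping rewriting of $zf$ as a product of $t$ edges times a monomial: shift the pairs $x_{2i}x_{2i+1}\to x_{2i-1}x_{2i}$, use a rung $x_jy_j$, and absorb one distinguished variable of $\tilde f$ (say $x_1$ or $y_2$, playing the role $x_1$ played before), exactly as in the displayed identities in the proof of Theorem \ref{main1(1)}.
\item \emph{The remaining generators.} These are about $n+1-2t$ generators along the untouched stretch of the ladder, and $\tilde f$ must dominate them, i.e.\ $z\tilde f\in I$ for each of them. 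Since each variable of $\tilde f$ covers at most three generators, $\tilde f$ needs degree $\lceil\frac{n-2t}{3}\rceil+1$. One variable is spent on the boundary effect; this accounts for the extra $+1$ and matches $2(t-1)+1+\lceil\frac{n-2t}{3}\rceil$.
\end{itemize}

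Concretely, $\tilde f$ is modelled on the $f_n$ of Theorem \ref{2oddvnumber}. Set $m=n-2t+1$, which is even. We alternate $x_{6i+c}$ and $y_{6i+c'}$ with period $6$, each chosen variable hitting three generators (two cycle neighbours and one rung), and split into cases according to $m \bmod 6$, handling small $m$ separately. In each residue class we verify $z\tilde f\in I$ by pairing $z$ with a specific neighbour in $\supp(\tilde f)$, as in items (i)--(viii) of Theorem \ref{main1(1)}.

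\textbf{Main obstacle.} The hard part is choosing the offsets $c,c'$ and the end corrections so that the last block wraps correctly around the odd cycle near $x_n$ and $y_n$, and so that no variable of $\tilde f$ is itself in $\mathfrak{p}$. I would first compute small cases ($n=5,7,9,11$ with $t=2,3$) to guess the pattern, then fix the three residue classes. Once $\mathfrak{p}\subseteq(I^t:f)$ is established, Lemma \ref{colon} gives $(I^t:f)=\mathfrak{p}$, so $\mathrm{v}(I^t)\le\deg f$, and combining with Lemma \ref{lowbound3} yields equality.
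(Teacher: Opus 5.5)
Your setup is the paper's. You use the same prime $\mathfrak p$ and the same monomial shape: the paper's $x_{2t-1}\prod_{i\in[t-1]}x_{2i-1}x_{2i}\,\tilde f_m$ is literally your $x_1\prod_{i\in[t-1]}x_{2i}x_{2i+1}\cdot\tilde f$, both being $x_1x_2\cdots x_{2t-1}\tilde f_m$. The telescoping for the generators near the used edges and the reduction through Lemma \ref{colon} are also the same.

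The gap is that the only step with real content, exhibiting $\tilde f$, is left as ``guess the pattern from small cases''. Your remark that each variable covers at most three generators, so $\tilde f$ \emph{needs} a certain degree, is only a necessity count. It is the inequality $3\deg(g)\ge|N_\Gamma(\supp(g))|$ from the proof of Lemma \ref{lowbound1}, not an existence argument, and without an explicit witness you have no upper bound.

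Your bookkeeping is also slightly off, in a way that hides how tight the construction must be. The block $x_1x_2\cdots x_{2t-1}$ handles exactly $x_2,\dots,x_{2t}$, $y_1,\dots,y_{2t-1}$ and $x_n$; $y_{2t+1}$ is not reachable from it. Moreover $x_1$, not $y_2$, is forced: $x_n\nmid f$, and $x_1$ is the only neighbour of $x_n$ outside $\supp(\mathfrak p)$. So exactly $n-2t$ generators remain, namely those of $\mathfrak p_1=(x_{2j}\mid j\in[t+1,\frac{n-1}{2}])+(y_{2i-1}\mid i\in[t+1,\frac{n+1}{2}])$. They must be hit by only $\lceil\frac{n-2t}{3}\rceil$ variables outside $\supp(\mathfrak p)$, which is an almost perfect tiling with waste at most $2$.

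The paper writes this tiling down. With $m=n-2t$ (odd), it sets $\tilde f_1=y_{2t}$ and, for $k\ge 1$,
\[
\tilde f_m=\Bigl(\prod_{i\in[2,k]}x_{2t+6i-7}\Bigr)\Bigl(\prod_{j\in[k]}y_{2t+6j-4}\Bigr)\cdot
\begin{cases}
1, & m=6k-3,\\
y_{2t+6k-2}, & m=6k-1,\\
x_{2t+6k-1}\,y_{2t+6k}, & m=6k+1.
\end{cases}
\]
The adjacencies that make this work are:
\begin{itemize}
\item each factor $y_{2t+6\ell+2}$ of $\tilde f_m$ is adjacent to $x_{2t+6\ell+2},y_{2t+6\ell+1},y_{2t+6\ell+3}$;
\item each factor $x_{2t+6\ell+5}$ is adjacent to $x_{2t+6\ell+4},x_{2t+6\ell+6},y_{2t+6\ell+5}$;
\item in every case, including $m=1$, $y_{n-1}$ is a factor of $\tilde f_m$ and is adjacent to $x_{n-1},y_{n-2},y_n$.
\end{itemize}
Thus $u\tilde f_m=(uz)(\tilde f_m/z)\in I$ for every $u\in\mathcal G(\mathfrak p_1)$ and a suitable factor $z$. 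Since $x_1x_2\cdots x_{2t-1}\in I^{t-1}$, this gives $\mathfrak p\subseteq(I^t:f)$.

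The wrap-around you single out as the main obstacle never arises. $x_n$ is absorbed by $x_1$ via the edge $x_nx_1$, and the blocks of $\tilde f_m$ end at $y_{n-1}$ without passing $y_n$.
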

	\begin{proof}

			Let $I=I(\Gamma)$. By Lemma \ref{lowbound3}, $\mathrm{v}(I^t)\ge 2t-1+\left\lceil\frac{n-2t}{3}\right\rceil$.
	It remains to prove the reverse inequality. Let $m=n-2t$. Since $n$ is odd and $t<\frac{n+1}{2}$, the integer $m$ is positive and odd. Let
			\[
			f_{t,n}=x_{2t-1}
			\left(\prod_{i\in[t-1]}x_{2i-1}x_{2i}\right)\tilde f_m,
			\]
				where $\tilde f_1=y_{2t}$, and for $m\ge 3$,
			\[
			\tilde f_m=
			\begin{cases}
				(\prod\limits_{i\in[2,k]}x_{2t+6i-7})
				(\prod\limits_{j\in[k]}y_{2t+6j-4}),& \text{if $m=6k-3$,}\\[2ex]
				(\prod\limits_{i\in[2,k]}x_{2t+6i-7})
				(\prod\limits_{j\in[k]}y_{2t+6j-4})y_{2t+6k-2},& \text{if $m=6k-1$,}\\[2ex]
				(\prod\limits_{i\in[2,k+1]}x_{2t+6i-7})
				(\prod\limits_{j\in[k]}y_{2t+6j-4})y_{2t+6k},& \text{if $m=6k+1$,}
			\end{cases}
			\]
			where $k\ge 1$.  By construction,
			\[
			\deg(f_{t,n})
			=2t-1+\left\lceil\frac{m}{3}\right\rceil
			=2t-1+\left\lceil\frac{n-2t}{3}\right\rceil.
			\]
			
			Consider the ideal
			\[
			\mathfrak p=
			(y_{2i-1}\mid i\in[\tfrac{n+1}{2}])
			+(x_{2j}\mid j\in[\tfrac{n-1}{2}])
			+(x_n).
			\]
			By the proof of Theorem \ref{2oddvnumber}, we have
			$\mathfrak p\in\Ass(I)$. Moreover,
			$\supp(\tilde f_m)\cap\supp(\mathfrak p)=\emptyset$, and hence
			\[
			\deg_{\mathfrak p}(f_{t,n})=t-1.
			\]
			By Lemma \ref{colon}, it suffices to show that
			$\mathfrak p\subseteq(I^t:f_{t,n})$.
				For $\ell\in[t]$, we have
			\begin{align*}
				x_{2\ell}f_{t,n}&=(\prod_{i\in[\ell]}x_{2i-1}x_{2i})
				(\prod_{j\in[\ell,t-1]}x_{2j}x_{2j+1})
				\tilde f_m\in I^{t},\\
				y_{2\ell-1}f_{t,n}&=(x_{2\ell-1}y_{2\ell-1})
				(\prod_{i\in[\ell-1]}x_{2i-1}x_{2i})
				(\prod_{j\in[\ell,t-1]}x_{2j}x_{2j+1})
				\tilde f_m\in I^{t}.
			\end{align*}
			Moreover, we have $x_nf_{t,n}=(x_nx_1)
			(\prod\limits_{i\in[t-1]}x_{2i}x_{2i+1})
			\tilde f_m\in I^{t}$. Therefore,
			\[
			(x_{2i}, y_{2i-1}\mid i\in[t])+(x_{n})\subseteq (I^t:f_{t,n}).
			\]
			Write
		\[
		\mathfrak p=(x_{2i},y_{2i-1}\mid i\in[t])
		+(x_n)+\mathfrak p_1,
		\]
		where
		\[
		\mathfrak p_1=
		(x_{2j}\mid j\in[t+1,\tfrac{n-1}{2}])
		+(y_{2i-1}\mid i\in[t+1,\tfrac{n+1}{2}]).
		\]
		Since
		\[
		x_{2t-1}\prod_{i\in[t-1]}x_{2i-1}x_{2i}
		\in I^{t-1},
		\]
		it remains to verify that
		$u\tilde f_m\in I$ for every
		$u\in\mathcal G(\mathfrak p_1)$.

		We consider the three possible forms of $m$:
		\[
		m=6k-3,\qquad m=6k-1,\qquad m=6k+1,
		\]
		together with the special case $m=1$. For $m=1$, the assertion follows
		immediately from $\tilde f_1=y_{2t}$. For the other three cases,
		substituting the corresponding expression for $\tilde f_m$ shows that
		every generator $u$ of $\mathfrak p_1$ can be paired with a variable
		appearing in $\tilde f_m$ such that their product is an edge monomial
		of $\Gamma$.
		For the sake of completeness, we shall verify each of these three cases separately:
		
		(1) If $m=6k-3$, then $n=2t+6k-3$ and $\mathfrak p_1
		=(x_{2t+2i}\mid i\in[3k-2])+(y_{2t+2j-1}\mid j\in[3k-1])$. There are two cases:
		(i) if $u\in\{x_{6\ell+2t+2},y_{6\ell+2t+1},y_{6\ell+2t+3}\mid 0\le\ell\le k-1\}$, then $u\tilde f_m=(uy_{6\ell+2t+2})
		(\frac{\tilde f_m}{y_{6\ell+2t+2}})\in I$;
		(ii) if $v\in	\{x_{6\ell+2t+4},x_{6\ell+2t+6},y_{6\ell+2t+5}\mid 0\le\ell\le k-2\}$, then $v\tilde f_m=(vx_{6\ell+2t+5})(\frac{\tilde f_m}{x_{6\ell+2t+5}})\in I$.
		
		(2) If $m=6k-1$, then $n=2t+6k-1$ and $\mathfrak p_1
		=(x_{2t+2i}\mid i\in[3k-1])+(y_{2t+2j-1}\mid j\in[3k])$. There are three cases:
		(i) if  $u\in	\{x_{6\ell+2t+2},y_{6\ell+2t+1},y_{6\ell+2t+3}\mid 0\le\ell\le k-1\}$, then $u\tilde f_m=(uy_{6\ell+2t+2})(\frac{\tilde f_m}{y_{6\ell+2t+2}})\in I$;
		(ii) if $v\in	\{x_{6\ell+2t+4},x_{6\ell+2t+6},y_{6\ell+2t+5}\mid 0\le\ell\le k-2\}$, then $v\tilde f_m=(vx_{6\ell+2t+5})(\frac{\tilde f_m}{x_{6\ell+2t+5}})\in I$;
		(iii) if $w\in\{x_{n-1},y_{n-2},y_n\}$, then $w\tilde f_m=(wy_{n-1})
		(\frac{\tilde f_m}{y_{n-1}})\in I$.
		
		(3) If $m=6k+1$, then $n=2t+6k+1$ and $\mathfrak p_1
		=(x_{2t+2i}\mid i\in[3k])+(y_{2t+2j-1}\mid j\in[3k+1])$. There are three cases:
		(i) if $u\in\{x_{6\ell+2t+2},y_{6\ell+2t+1},y_{6\ell+2t+3}\mid 0\le\ell\le k-1\}$, then $u\tilde f_m=(uy_{6\ell+2t+2})
		(\frac{\tilde f_m}{y_{6\ell+2t+2}})\in I$;
		(ii) if $v\in\{x_{6\ell+2t+4},x_{6\ell+2t+6},y_{6\ell+2t+5}\mid 0\le\ell\le k-1\}$, then $v\tilde f_m=(vx_{6\ell+2t+5})
		(\frac{\tilde f_m}{x_{6\ell+2t+5}})\in I$.	(iii) if $w\in\{x_{n-1},y_{n-2},y_n\}$, then $w\tilde f_m=(wy_{n-1})
		\frac{\tilde f_m}{y_{n-1}}\in I$.
		
		Consequently, $u\tilde f_m\in I$ for every
		$u\in\mathcal G(\mathfrak p_1)$, and therefore
		\[
		\mathfrak p_1\subseteq(I:\tilde f_m).
		\]
	It follows that $\mathfrak p\subseteq(I^t:f_{t,n})$.
			By Lemma \ref{colon},
			\[
			(I^t:f_{t,n})=\mathfrak p.
			\]
			Hence
			\[
			\mathrm{v}(I^t)
			\le\deg(f_{t,n})
			=2t-1+\left\lceil\frac{n-2t}{3}\right\rceil.
			\]
			Combining this with the lower bound yields
			\[
			\mathrm{v}(I^t)
			=2t-1+\left\lceil\frac{n-2t}{3}\right\rceil.
			\]
			This completes the proof.
	\end{proof}

	\subsection{The case of $\Gamma=C_{2n}(a,n)$ with $1\le a<n$}

The cases $a=1$ and $a=2$ for odd $n\ge3$ have already been established.  Therefore, by  Lemma \ref{sum} and Theorems  \ref{main1},
\ref{main1(1)}, \ref{main2(1)}, and \ref{main2(2)}, and  direct calculation, we can obtain
 the following theorem, which corresponds to the case where $t\ge 2$ in Theorem \ref{main}:

	\begin{Theorem}\label{mainzhu}
		Let $\Gamma=C_{2n}(a,n)$ be a cubic circulant graph with
		$1\leq a<n$, and let $I(\Gamma)$ be its edge ideal. Let
  $k=\gcd(a,n)$, $m=\frac{n}{k}$, $b=\frac{a}{k}$ and
		\[
		\rho=
		\begin{cases}
			m,   & \text{if both $m$ and $b$ are odd,}\\
			m+1, & \text{otherwise.}
		\end{cases}
		\]
		Then, for all integer $t\geq 1$,
		\[
		\begin{aligned}
			\mathrm{v}(I(\Gamma)^t) = \max\Big\{ & 2t+k-2,\; t-1+k\left\lceil\rho/3\right\rceil,\; \left\lceil\frac{4(t-1)+k\rho}{3}\right\rceil, \\
			& \left\lceil\frac{3(t-1)+k\left(\left\lfloor\rho/2\right\rfloor+1\right)}{2}\right\rceil \Big\}.
		\end{aligned}
		\]
		\end{Theorem}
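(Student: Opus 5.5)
Throughout, write $G(m)$ for the connected cubic circulant graph that appears as a component of $\Gamma$. The proof combines the single-component formulas already obtained through Lemma \ref{sum}, and then evaluates a min-plus convolution.

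\textbf{Step 1: identify the components.} With $k=\gcd(a,n)$, $m=n/k$ and $b=a/k$, we have $\gcd(b,m)=1$ and $\gcd(a,2n)=k\gcd(b,2m)$.
\begin{itemize}
\item If $b$ is odd, then $\gcd(a,2n)=k$ and $2n/k=2m$ is even. By Lemma \ref{decomposition}(i), $\Gamma$ is $k$ copies of $C_{2m}(1,m)$.
\item If $b$ is even, then $m$ is odd, $\gcd(a,2n)=2k$ and $2n/(2k)=m$ is odd. By Lemma \ref{decomposition}(ii), $\Gamma$ is $k$ copies of $C_{2m}(2,m)$.
\end{itemize}
In every case $\Gamma$ consists of $k$ disjoint copies of one connected graph $G(m)$. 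By Lemmas \ref{1vertexcovernumber} and \ref{2oddvertexcovernumber}, $\tau(G(m))=\rho$. Iterating Lemma \ref{sum} then gives
\[
\mathrm v(I(\Gamma)^t)=\min\Big\{\sum_{i=1}^k g(s_i)\;\Big|\; s_i\ge 0,\ \sum_{i=1}^k s_i=t-1\Big\},
\]
where $g(s)=\mathrm v(I(G(m))^{s+1})$.

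\textbf{Step 2: a uniform formula for $g$.} I will check that
\[
g(s)=\max\Big\{2s+1,\ \Big\lceil\frac{4s+\rho}{3}\Big\rceil\Big\}\quad\text{for all } s\ge 0.
\]
The sources for each range of $s$ are as follows.
\begin{itemize}
\item For $s=0$: Theorems \ref{1vnumber} and \ref{2oddvnumber} give $\lceil\rho/3\rceil$.
\item For the middle range: Theorems \ref{main1(1)} and \ref{main2(2)} give $2s+\lceil(\rho-2s)/3\rceil=\lceil(4s+\rho)/3\rceil$. For the $a=2$ case this uses $\rho=m+1$ and the identity $2t-1+\lceil(m-2t)/3\rceil=2t-2+\lceil(m-2t+3)/3\rceil$.
\item For large $s$: Theorems \ref{main1} and \ref{main2(1)} give $2s+1$.
\item Small $m$: Remarks \ref{smalln} and \ref{smalln2} cover these.
\end{itemize}
One must verify that at each threshold the larger of the two expressions is the one stated. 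This is a short comparison: $\lceil(4s+\rho)/3\rceil\le 2s+1$ if and only if $2s\ge\rho-3$ roughly, which matches $t\ge\lfloor m/2\rfloor$, respectively $t\ge (m+1)/2$.

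\textbf{Step 3: lower bound.} For $s\ge0$ I will prove four termwise affine inequalities:
\begin{align*}
g(s)&\ge 2s+1,\\
g(s)&\ge s+\lceil\rho/3\rceil,\\
g(s)&\ge (4s+\rho)/3,\\
g(s)&\ge \big(3s+\lfloor\rho/2\rfloor+1\big)/2.
\end{align*}
\begin{itemize}
\item The first and third are immediate from the formula for $g$.
\item The second holds because $g(0)=\lceil\rho/3\rceil$ and every increment $g(s+1)-g(s)$ is at least $1$.
\item For the fourth: when $2s+1$ is the maximum, it follows from $s\ge(\rho-3)/2$. Otherwise I use $\lceil(4s+\rho)/3\rceil$ together with integrality, splitting by the residues of $\rho$ and $s$.
\end{itemize}
Summing each inequality over $i$ with $\sum s_i=t-1$ and taking ceilings, since the left side is an integer, gives $\mathrm v(I(\Gamma)^t)\ge$ each of the four terms in the claimed maximum.

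\textbf{Step 4: upper bound.} The increments $g(s+1)-g(s)$ lie in $\{1,2\}$. In the $\lceil(4s+\rho)/3\rceil$ regime they follow a periodic pattern of two $1$'s and one $2$ in some cyclic order, and they become $2$ from the threshold on. So $g$ is "discretely concave-then-linear" only in a weak sense. Even so, the min-plus convolution of $k$ copies is attained greedily, because any optimal distribution can be rearranged to use, over all copies, the $t-1$ cheapest available increments. I will exhibit explicit distributions:
\begin{itemize}
\item put $s_i\in\{0,\dots\}$ so that each copy first uses its increments of size $1$ (there are about $\lfloor\rho/2\rfloor-\lceil\rho/3\rceil+\dots$ of them before the threshold);
\item then use the remaining steps of size $2$.
\end{itemize}
I will then evaluate the resulting sum in the regimes where $t-1$ is small, moderate, or beyond all thresholds, and check that it equals one of the four terms. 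The main obstacle is this last bookkeeping. The interplay between the floor/ceiling periodicity of $\lceil(4s+\rho)/3\rceil$ and the cutoff $\lfloor\rho/2\rfloor$ produces the $3/2$-slope term, and I expect to handle it by splitting on $\rho \bmod 6$ and on how $t-1$ compares to $k$ times the per-copy numbers of cheap steps.
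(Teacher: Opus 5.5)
Your Steps 1--3 are correct, and in substance they are the paper's own reduction: the paper proves Theorem~\ref{mainzhu} by Lemma~\ref{decomposition}, Lemma~\ref{sum} and the component theorems, followed by a ``direct calculation''. Your uniform formula $g(s)=\max\{2s+1,\lceil(4s+\rho)/3\rceil\}$ checks out. You should also list $m=2$, where $G(m)=K_4$ and $g(s)=2s+1$ by the paragraph before Theorem~\ref{main1}. The four termwise minorants give the lower bound.

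The gap is Step~4. The principle you invoke, that the min-plus convolution is attained by ``the $t-1$ cheapest available increments'', is false here. The function $g$ is not convex: before the threshold its increments cycle through $2,1,1$, and within one copy the increments must be taken in order. Take $\rho=9$ and $k=2$, for instance $\Gamma=C_{36}(2,18)$, which is two copies of $C_{18}(1,9)$. Then $g(0),\dots,g(5)=3,5,6,7,9,11$, with increments $2,1,1,2,2$.
\begin{itemize}
\item At $t=2$ the cheapest increment is $1$, which would give $2g(0)+1=7$. The true value is $g(0)+g(1)=8=\lceil 22/3\rceil$.
\item At $t-1=5$, a step-by-step greedy with unlucky tie-breaking ends at $(5,0)$ or $(4,1)$, of value $14$. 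The optimum is $(3,2)$, of value $13$.
\end{itemize}
So your recipe ``first use the increments of size $1$, then the steps of size $2$'' cannot be carried out. The bookkeeping you postpone is the entire upper bound, and the mechanism you propose for it does not work.

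What is needed is a convex-minorant argument. Let $\phi$ be the maximum of your four affine functions. Step~3 gives $\phi\le g$ on $\mathbb{Z}_{\ge0}$, and the claimed value is exactly $\lceil k\phi((t-1)/k)\rceil$. Put $T=\lfloor\rho/2\rfloor-1$ and $s_1=3\lceil\rho/3\rceil-\rho$.
\begin{itemize}
\item One has $g(s)=\lceil(4s+\rho)/3\rceil$ for $s\le T$ and $g(s)=2s+1$ for $s\ge T$. Hence $g-\phi<1$ everywhere.
\item The breakpoints of $\phi$ are $s_1$, then $T-2$ (only for $\rho$ even; this is the $3/2$-slope segment from $(T-2,2T-2)$ to $(T,2T+1)$), then $T$.
\item Since $s_1\equiv T-2$ (resp.\ $s_1\equiv T$) $\pmod 3$, all these breakpoints, together with $0$ and every $s\ge T$, satisfy $g=\phi$. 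The case $\rho=4$, where $\phi=\max\{s+2,2s+1\}$, is immediate.
\end{itemize}
Now take consecutive such tight points $v<v'$ with $kv\le t-1\le kv'$, and write $t-1-kv=q(v'-v)+p$ with $0\le p<v'-v$. Place $q$ copies at $v'$, one copy at $v+p$ if $p>0$, and the rest at $v$. Because $\phi$ is affine on $[v,v']$, the sum is $k\phi((t-1)/k)+\bigl(g(v+p)-\phi(v+p)\bigr)$. This is an integer in $[k\phi((t-1)/k),\,k\phi((t-1)/k)+1)$, so it equals the claimed ceiling.
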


		\medskip
			\vspace{1mm}

		\hspace{-6mm} {\bf Acknowledgment: }
		This research is supported by the Natural Science Foundation of Jiangsu Province (No. BK20221353) and the National Natural Science Foundation of China (No.12471246).  The first author carried out part of this work while visiting the International Center for Research in Mathematics and Postgraduate Training (ICRTM), affiliated with the Institute of Mathematics at the Vietnam Academy of Science and Technology, through the ICRTM--TWAS Visiting Program. She expresses her sincere gratitude for the financial support and warm hospitality received during this period.

	\end{document}